\numberwithin{equation}{section}
\def \cn{\color{black} }
\DeclareMathOperator\curl{curl}
\def\qand{\quad \text{and}\quad}
\def\Diff{\mathrm{Diff}}
\def\B{\mathbb B}
\def\C{\mathbb C}
\def\R{\mathbb R}
\def\N{\mathbb N}
\def\T{\mathbb T}
\def\D{\mathbb D}
\def\I{\mathbb I}
\def\I{\mathbb I}
\def\Z{\mathbb Z}
\def \coker{\mathrm{codim}\, }
\def\leb{\mathrm{Leb}}
\def\Leb{\mathrm{Leb}}
\def\cal{\mathcal }
\newtheorem{proposition}{Proposition}[section]
\newtheorem{theorem}[proposition]{Theorem}
\newtheorem*{theorem*}{Theorem}
\newtheorem{coro}[proposition]{Corollary}
\newtheorem{problem}[proposition] {Problem}
\newtheorem*{problem*}{Problem}
\newtheorem{definition}[proposition]{Definition}
\newtheorem{lemma}[proposition] {Lemma}
\newtheorem{theo}{Theorem}
\newtheorem{fact}[proposition]{Fact}
\newtheorem{conjecture}[proposition]{Conjecture}
\theoremstyle{remark}
\newtheorem{example}[proposition]{Example}
\newtheorem{remark}[proposition]{Remark}
\DeclareTextFontCommand{\emph}{\em\bf}
\newcommand{\Addresses}{{
		\bigskip
		\footnotesize
		
		Pierre Berger\par\nopagebreak
		\textit{E-mail address}: \texttt{pierre.berger@imj-prg.fr}
		
		\medskip
		
		Anna Florio\par\nopagebreak
		\textit{E-mail address}: \texttt{florio@ceremade.dauphine.fr}
		
		\medskip
		
		Daniel Peralta-Salas\par\nopagebreak
		\textit{E-mail address}: \texttt{dperalta@icmat.es}
		
}}
\begin{document}
\title
{Steady Euler flows on $\R^3$ with wild and universal dynamics}
\author{Pierre Berger\thanks{IMJ-PRG, CNRS, Sorbonne University, Paris University, partially supported by the ERC project 818737 Emergence of wild differentiable dynamical systems.
}, Anna Florio\thanks{CEREMADE-Universit\'e Paris Dauphine-PSL, 75775 Paris, France. Partially supported by the project ANR CoSyDy (ANR-CE40-0014).}, Daniel Peralta-Salas\thanks{Instituto de Ciencias Matem\'aticas, Consejo Superior de Investigaciones Cient\'\i ficas, 28049 Madrid, Spain. Supported by the grants EUR2019-103821, PID2019-106715GB GB-C21, CEX2019-000904-S and RED2018-102650-T funded by MCIN/AEI/ 10.13039/501100011033}}

\date{\today}
\maketitle
\begin{abstract}
Understanding complexity in fluid mechanics is a major problem that has attracted the attention of physicists and mathematicians during the last decades. Using the concept of renormalization in dynamics, we show the existence of a locally dense set $\mathscr G$ of stationary solutions to the Euler equations in $\mathbb R^3$ such that each vector field $X\in \mathscr G$ is universal in the sense that any area preserving diffeomorphism of the disk can be approximated (with arbitrary precision) by the Poincar\'e map of $X$ at some transverse section. We remark that this universality is approximate but occurs at all scales. In particular, our results establish that a steady Euler flow may exhibit any conservative finite codimensional dynamical phenomenon; this includes the existence of horseshoes accumulated by elliptic islands, increasing union of horseshoes of Hausdorff dimension $3$ or homoclinic tangencies of arbitrarily high multiplicity. The steady solutions we construct are Beltrami fields with sharp decay at infinity. To prove these results we introduce new perturbation methods in the context of Beltrami fields that allow us to import deep techniques from bifurcation theory: the Gonchenko-Shilnikov-Turaev universality theory and the Newhouse and Duarte theorems on the geometry of wild hyperbolic sets. These perturbation methods rely on two tools from linear PDEs: global approximation and Cauchy-Kovalevskaya theorems. These results imply a strong version of V.I. Arnold's vision on the complexity of Beltrami fields in Euclidean space.
\end{abstract}

\tableofcontents

\vspace{2cm}
\section{Introduction and statements of the main results}

The evolution of an ideal fluid flow in equilibrium is described by an autonomous vector field $X$ (the velocity field of the fluid) and a scalar function $P$ (the hydrodynamic pressure) that satisfy the stationary Euler equations in Euclidean space:
\begin{equation}\label{Eq.euler}
\nabla_X X=-\nabla P\,, \qquad \text{div} X=0\,.
\end{equation}
Here $\nabla_X X$ denotes the covariant derivative of $X$ along itself and $\text{div}$ is the divergence operator, everything computed using the Euclidean metric. A vector field $X$ on $\mathbb R^3$ that satisfies Equation~\eqref{Eq.euler} for some $P$ is called a steady Euler flow. We recall that the orbits (or integral curves) of $X$ are the stream lines of the fluid, which in the stationary case coincide with the paths followed by the fluid particles.
In terms of the vorticity field $\curl X$ and the Bernoulli function $B:=P+\frac12 |X|^2$, the stationary Euler equations can be equivalently written~\cite{AK} as
\begin{equation*}
X \times \curl X=\nabla B\,, \qquad \text{div} X=0\,.
\end{equation*}

In 1965 V.I. Arnold~\cite{Ar65,Ar66} noticed that $B$ is a first integral of $X$, and hence it is invariant by its flow. Whenever $X$ is analytic, then either $B$ is constant or displays a critical set of Lebesgue measure zero. In the latter case, the celebrated Arnold's structure theorem showed that any regular level set of the Bernoulli function $B$ is a flat surface on which $X$ acts locally as a translation. Hence, the velocity is integrable in a sense analogous to Liouville integrability in Hamiltonian systems. This can be interpreted as a laminar behavior of steady Euler flows~\cite{MYZ}.

Otherwise, $\curl X=\lambda \cdot X$, for some real analytic function $\lambda$ (under the assumption that $X$ is analytic). In \cite{He66}, H\'enon observed that if $\lambda$ is not constant then the behavior of the flow is laminar. In the case that $\lambda\equiv 0$, then the dynamics $X$ on $\R^3$ must be the gradient of a harmonic function, and its entropy is zero. Hence the only (analytic) solutions of the stationary Euler equations which might have a complicated dynamics are:

\begin{definition} A vector field $X$ is $\lambda$-\emph{Beltrami} if there exists a real number $\lambda\neq 0$ such that:
\[
\curl X = \lambda\cdot  X\; .
\]
\end{definition}

Note that every Beltrami field is a solution of the steady Euler equations (with constant Bernoulli function). In the context of magnetohydrodynamics, these fields are known as force-free fields, and appear when modeling stellar atmospheres~\cite{AK}.

H\'enon's numerical experiments with the ABC flows~\cite{He66} (an explicit family of Beltrami fields on $\mathbb T^3$) showed that Beltrami fields might have a complicated dynamics (KAM tori and stochastic sea). In fact, Arnold had the following vision on the complexity of Beltrami flows:
\begin{center}\it
The flow of a vector field satisfying $\curl X= \lambda \cdot  X$ probably displays stream lines with topologies as complicated as those of orbits in celestial mechanics (see \cite[Fig. 6]{Ar63}).
\end{center}
\begin{flushright}
Arnold  \cite[Rk. P. 90]{Ar65} and  \cite[P. 347]{Ar66}.
 \end{flushright}
 \begin{figure}[h]
	\centering
	\includegraphics[scale=0.25]{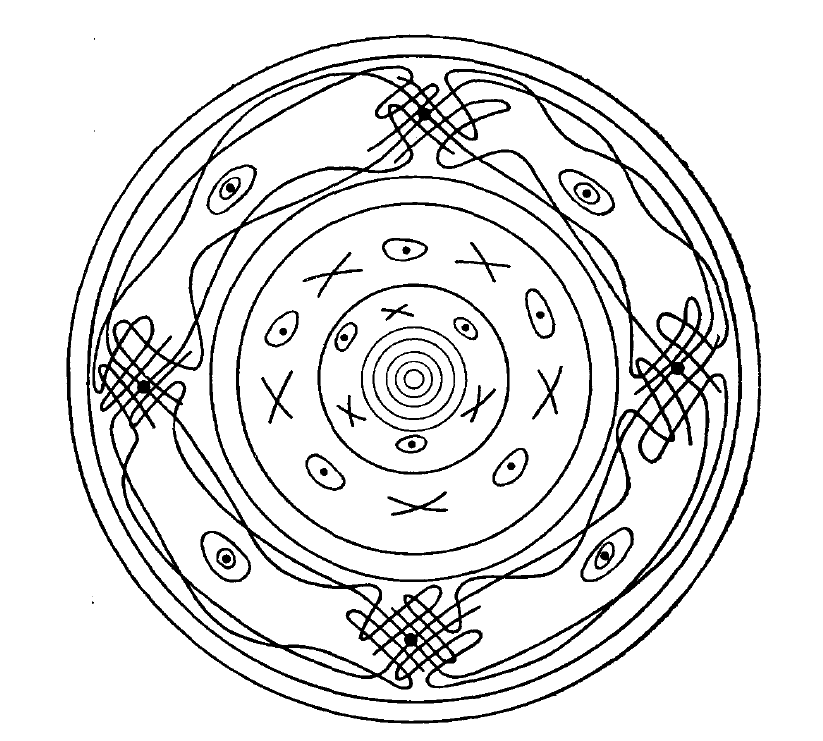}
	\caption{\cite[Fig. 6]{Ar63}}
	\label{ImageArnold}
\end{figure}
As commented around \cite[Fig. 6]{Ar63}, by  ``complicated  topology", Arnold meant the coexistence of nested (KAM) tori centered at elliptic orbits, such that in between there are elliptic and saddle orbits, the elliptic ones being surrounded by (KAM) tori and the stable and unstable manifolds of the saddle ones intersecting each other to create an intricate network in the ``zones of instability".
It is relevant to observe that the study of the complexity of Beltrami fields may be of interest to study complex non-stationary solutions of the Navier-Stokes equations. Indeed experimental and numerical observations postulated
a phenomenon known as ``Beltramization'', which states that  in turbulent regions the velocity $X$ tends to align to the vorticity $\curl X$ when the Reynolds number is large~\cite{Farge,Monchaux}. 
\medskip

As we will discuss in \textsection\ref{state of the art}, the behavior presented in Figure~\ref{ImageArnold} is well known to be typical among smooth conservative surface dynamics; in contrast, the previous literature on Beltrami fields yields dynamical phenomena far from establishing Arnold's picture. 
Our main results imply a strong version of Arnold's description. We will show, in particular, the existence of a non-empty open subset $\cal N_{\mathscr B}(\R^3)$ of Beltrami fields in which any (families of) conservative dynamics from the disk is arbitrarily well approximated by (families of) Poincar\'e return maps of generic (families of) Beltrami fields in $\cal N_{\mathscr B}(\R^3)$. The main difficulty of our proof is to design perturbations of the homoclinic tangle to create multiple homoclinic tangencies, combining the very rigid space of Beltrami fields with the wild geometry of the Newhouse domain.

\medskip

Let us precise the statements of our main results. Denote by $\D$ the closed unit disk of $\R^2$. Let $\Diff^\infty_{\Leb+}(\D, \R^2)$ be the set of diffeomorphisms from $\D$ onto their image in $\R^2$ which are infinitely smooth and symplectic. By \emph{symplectic}, we mean orientation and area preserving.

Let $U\subset \R^3$ be an open subset of $\R^3$.  A vector field on $U$ is of class $C^\infty_\leb$ if it is smooth and volume preserving.
Let us denote by $\Gamma_\leb^\infty(U)$ the space of $C^\infty_\leb$-vector fields on $(U,\leb)$.
Let $\mathscr B(U)\subset \Gamma_\leb^\infty(U)$ be the space of (Euclidean) $1$-Beltrami fields on $U$ with sharp decay. These are vector fields $X$ such that $\mathrm{curl}\, X=X$ and for any multi-index $\alpha \in \N^3$, the norm  $\|D^\alpha X(x)\|$ is dominated by $(1+\|x\|)^{-1}$ for every $x\in U$. It is clear that there is no loss of generality in choosing the constant $\lambda=1$ because the proportionality factor of a Beltrami field can be rescaled by a homothety. In what follows,  \emph{Beltrami field}  will refer canonically to a $1$-Beltrami field.

We recall that a saddle periodic orbit $P$ of a vector field $X_0$ displays a \emph{homoclinic tangency} if its stable and unstable manifolds are tangent at a point $Q$. The tangency is \emph{quadratic} if the curvatures of the stable and unstable manifolds at $Q$ are different. An \emph{unfolding} of $X_0$ is a    smooth $k$-parameter family $(X_p)_{p\in\mathbb{I}^k}$ containing $X_0$ at $p=0$, where $\mathbb{I}:=[-1,1]$. If it is a one-parameter family ($p$ varies in an interval $\mathbb{I}$), the unfolding is \emph{non-degenerate} if the relative position of the continuation of the  stable and unstable manifolds at the tangency point $Q$ has non-zero derivative at $p=0$.  Then we say that the quadratic homoclinic tangency  
 \emph{unfolds non-degenerately}.

Let $\cal N_\Gamma(\R^3)$ be the interior of the closure\footnote{\label{footnote1}See  \textsection \ref{def of topology} for the definition of the involved topology.}
of the $C^\infty_\leb$-vector fields on $\R^3$ exhibiting a periodic saddle displaying a homoclinic tangency. A consequence of Newhouse's celebrated theorem \cite{Ne1} is that $\cal N_\Gamma(\R^3)$ is non-empty.
A consequence of Duarte's Theorem \cite{Du99}
is that the closure of $\cal N_\Gamma(\R^3)$ is equal to the closure of the set of $C^\infty_\leb$-vector fields on $\R^3$ for which there exists a periodic saddle displaying a homoclinic tangency. The open set $\cal N_\Gamma(\R^3)$ is the so-called \emph{Newhouse domain}. It is formed by dynamics which are extremely rich as we will see in the sequel.

Our first main result is the Beltrami counterpart of these theorems.
Let $\cal N_{\mathscr B}(\R^3)$ be the interior of the closure$^{\ref{footnote1}}$ 
of the set of Beltrami fields $X\in \mathscr B(\R^3)$ that exhibit a saddle periodic orbit displaying a homoclinic tangency \emph{and} such that the tangency can be non-degenerately unfolded by a family in $\mathscr B(\R^3)$. Here is our first main result:

\begin{theo}\label{ThmA}
The set $\cal N_{\mathscr B}(\R^3)$ is non-empty.
\end{theo}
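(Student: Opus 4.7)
The plan is to prove Theorem A in two stages: first construct a single Beltrami field $X_0 \in \mathscr B(\R^3)$ which exhibits a hyperbolic periodic orbit with a quadratic homoclinic tangency that admits a non-degenerate unfolding inside $\mathscr B(\R^3)$; then show that such fields form an open subset of the closure mentioned in the definition of $\mathcal N_{\mathscr B}(\R^3)$. The central technical ingredient, announced in the abstract, is the combination of the \emph{Cauchy--Kovalevskaya} extension for the overdetermined equation $\curl u = u$ together with a \emph{global approximation} theorem for Beltrami fields on $\R^3$ with sharp decay (of Enciso--Peralta-Salas type). Together these produce, for any smooth volume-preserving perturbation supported in a small neighborhood of a saddle periodic orbit, a nearby genuine Beltrami perturbation in $\mathscr B(\R^3)$ that approximates it on compact sets.

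For the first stage, I would start from an explicit integrable (or nearly integrable) Beltrami field on $\R^3$ containing a normally hyperbolic invariant surface with a hyperbolic periodic orbit, e.g.\ obtained from an ABC-type profile localized by the approximation theorem. Generically the stable and unstable manifolds of this saddle come back and intersect transversally, so a horseshoe is already present. Using the perturbation tool described above, one may deform the unstable manifold near a chosen intersection point to create a quadratic tangency at a point $Q$: it suffices to prescribe, via Cauchy--Kovalevskaya, the appropriate jet of a local Beltrami perturbation on a disk transverse to $X_0$ at $Q$, and then approximate this local field globally by an element of $\mathscr B(\R^3)$.

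Next I would show that the resulting tangency can be unfolded non-degenerately within $\mathscr B(\R^3)$. One parameter is enough: on the same local cross-section it suffices to produce a one-parameter family of Beltrami perturbations $Y_p$ whose jet at $Q$ translates the unstable manifold transversely to the stable one at non-zero speed. Again, the Cauchy--Kovalevskaya construction supplies such local jets, and the global approximation theorem converts them into a smooth family $X_p \in \mathscr B(\R^3)$ with the required non-degenerate splitting velocity.

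Finally, for the openness step, Newhouse's theorem in the smooth volume-preserving category \cite{Ne1}, together with Duarte's extension \cite{Du99}, asserts that the set of $C^\infty_\leb$ vector fields exhibiting a saddle with a homoclinic tangency is $C^\infty$-dense in an open set, and that tangencies persist under arbitrary $C^\infty_\leb$-small perturbations near any non-degenerately unfolded tangency. Combining this with the approximation scheme above, one can realize these nearby tangencies by genuine Beltrami fields with sharp decay and with unfoldings still inside $\mathscr B(\R^3)$, proving that our starting $X_0$ lies in the interior of the closure defining $\mathcal N_{\mathscr B}(\R^3)$.

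The hardest step is the conversion of Newhouse-type perturbations into Beltrami perturbations while preserving the non-degenerate unfolding condition: the space $\mathscr B(\R^3)$ is extremely rigid (solutions of an elliptic system with sharp decay), so one cannot localize perturbations naively. The Cauchy--Kovalevskaya extension gives local flexibility and the global approximation theorem controls the errors at infinity, but one must ensure that the approximants preserve both the periodic saddle (by an implicit function / persistence argument for hyperbolic orbits) and the transversal crossing velocity at the tangency. Checking that the scheme is rich enough to realize the full one-parameter unfolding required by the definition of $\mathcal N_{\mathscr B}(\R^3)$ is the principal technical obstacle of the argument.
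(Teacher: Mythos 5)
Your outline correctly identifies the two PDE tools (Cauchy--Kovalevskaya and global approximation) and the bifurcation-theory tools (Newhouse, Duarte) that the paper uses, but there are genuine gaps in both stages of the plan, and the paper's actual mechanism is different in essential ways.

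First, the starting configuration. You propose an ``ABC-type profile localized by the approximation theorem,'' but ABC flows live on $\T^3$, not $\R^3$, and the paper explicitly notes that the ABC example has an integrable (hence non-quadratic) tangency. The paper instead starts from a specific explicit Beltrami field on $\R^3\setminus(\{(0,0)\}\times\R)$ (\cref{example:EPSR}, from [EPSR]) displaying a \emph{double strong heteroclinic link} $(\Gamma^+,\Gamma^-)$: two analytic invariant cylinders joining the same pair of saddle periodic orbits. This is not a detail but the structural backbone of the whole argument, because CK (\cref{cor:CK}) is applied on those cylinders (not on a transverse disk at a single tangency point $Q$, as you propose) to prescribe the normal component of a local Beltrami field; this is then fed through the inverse of the Melnikov operator (\cref{prop:Meln:smooth}) to control the splitting of invariant manifolds. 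Moreover your sentence ``Generically the stable and unstable manifolds of this saddle come back and intersect transversally, so a horseshoe is already present'' is false for the integrable starting point: there the manifolds coincide, and transversality must be \emph{produced}, not assumed.

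Second, and more fundamentally, you do not address the \emph{interference problem} caused by non-localizability of Beltrami perturbations, even though you name it as the main obstacle. The global approximation theorem gives no control on the perturbation outside a neighborhood of the compact set $K$; there is no analogue of a bump function in $\mathscr B(\R^3)$. The paper's route around this is the point of \cref{prop:transverse+k-het-tg} and its linear-algebra trick: one must simultaneously control the $1$-jet of the Melnikov function at $\Gamma^+$ (to create a transverse heteroclinic crossing) and the $k$-jet at $\Gamma^-$ (to create the tangency), and naively summing two perturbations built for each link separately does not work because each can destroy the other's Melnikov profile. The trick considers three distinct base points $\theta_1,\theta_2,\theta_3$ on $\Gamma^-$, shows the three kernels are in general position, and deduces one of them must be transverse to the kernel coming from $\Gamma^+$; nothing in your outline replaces this step. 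Without it, the sentence ``it suffices to produce a one-parameter family of Beltrami perturbations $Y_p$ whose jet at $Q$ translates the unstable manifold transversely to the stable one at non-zero speed'' is an assertion, not an argument.

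Third, the openness step is directionally right (invoke Duarte's wild horseshoe theorem) but the mechanism is not the one you describe. One cannot simply say that Newhouse/Duarte density ``combined with the approximation scheme'' yields realizability by Beltrami families; the issue is that $\cal N_{\mathscr B}(\R^3)$ requires the tangency to unfold non-degenerately \emph{within} $\mathscr B(\R^3)$, and that is not automatic for a Beltrami field that merely happens to be close to a smooth vector field with a wild horseshoe. The paper's argument instead exploits the vector-space structure of $\mathscr B(\R^3)$: starting from a Beltrami family $(X_p)_{p\in\I}$ with a non-degenerate quadratic unfolding and a parameter $p_0$ where $X_{p_0}$ has a wild horseshoe, any nearby $\tilde X\in\mathscr B(\R^3)$ gives a \emph{new Beltrami family} $(\tilde X - X_{p_0} + X_p)_p$ which still non-degenerately unfolds that persistent wild horseshoe; then \cref{robust implies homo} (a consequence of the Duarte machinery) produces a nearby parameter $p_1$ with a quadratic homoclinic tangency unfolding non-degenerately in this Beltrami family. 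This ``freeze the direction, move the base point'' argument is what actually makes a whole neighborhood sit inside the closure, and it is absent from your sketch.
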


The main difficulty of this result is to show the existence of a Beltrami field $X\in \mathscr B(\R^3)$ displaying a  homoclinic tangency  which can be   non-degenerately unfolded  in $\mathscr B(\R^3)$. Then  Duarte's theorem implies
 directly the non-emptiness of
$\cal N_{\mathscr B}(\R^3)$.
In particular \cref{ThmA} provides the \emph{first} example of a Beltrami field in Euclidean space with a \emph{quadratic} homoclinic tangency.  Examples of homoclinic tangencies exist on the torus $\R^3/\Z^3$, see e.g.~\cite[ABC flow Example 1.9, Page 76]{AK}; yet the manifold is not $\R^3$ and the example is integrable so the tangency is not quadratic. Moreover the displayed non-degenerate quadratic unfolding implies the existence of Beltrami fields with horseshoes (without any computer assisted estimate as in \cite{EPSR}). Also by Duarte's theorem, this reveals, as a new phenomenon in the Beltrami setting,  the coexistence of infinitely many KAM tori which accumulate on a Smale horseshoe. These consequences will be discussed in detail in \textsection\ref{state of the art}.

Our definition of the Beltrami Newhouse domain $\cal N_{\mathscr B}(\R^3)$ asks for the non-degeneracy of the unfolding of quadratic homoclinic tangencies, which is not asked in the standard definition of the smooth Newhouse domain  $\cal N_{\Gamma}(\R^3)$. Indeed it is well known that a homoclinic tangency in $\Gamma^\infty_\leb(\R^3)$ can be non-degenerately unfolded \cite{BT86}. A natural  Beltrami counterpart of
this result is:
\begin{conjecture}
Every homoclinic tangency for a periodic saddle orbit of a Beltrami field $X\in \mathscr B(\R^3)$ unfolds non-degenerately in $\mathscr B(\R^3)$.
\end{conjecture}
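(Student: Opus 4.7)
The decisive observation is that the Beltrami equation $\curl Y = Y$ is \emph{linear}, so if $X\in\mathscr B(\R^3)$ and $Y\in\mathscr B(\R^3)$ is any other Beltrami field with sharp decay, then the affine family $X_p:=X+pY$ lies entirely in $\mathscr B(\R^3)$ for every $p\in\I$. Consequently, to realize a non-degenerate unfolding of a given quadratic homoclinic tangency at a point $Q$ of a periodic saddle orbit $P$ of $X$, it suffices to exhibit a single infinitesimal deformation $Y\in\mathscr B(\R^3)$ whose Melnikov splitting at $Q$ is non-zero. The splitting is a linear functional $Y\mapsto M(Y)$ on deformations, so the conjecture reduces to the surjectivity statement $M\!\mid_{\mathscr B(\R^3)}\not\equiv 0$.

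The strategy has three steps. First, following the standard construction in conservative bifurcation theory, write
\[
M(Y)\;=\;\int_{-\infty}^{+\infty}\omega\bigl(\gamma(t)\bigr)\bigl(Y(\gamma(t))\bigr)\,dt,
\]
where $\gamma$ is a parametrization of the homoclinic orbit and $\omega$ is the one-form on the normal bundle to $\gamma$ built from the adjoint linearization of $X$; because the tangency is quadratic and $X$ is volume preserving, $\omega$ does not vanish identically and decays exponentially at $\pm\infty$. Fix $q_0\in\gamma$ with $\omega(q_0)\ne 0$. Second, apply the Cauchy-Kovalevskaya theorem to the elliptic system $\curl\tilde Y=\tilde Y$: prescribe on a small analytic disk $\Sigma_0$ transverse to $\gamma$ at $q_0$ Cauchy data for which the resulting local Beltrami field $\tilde Y$ satisfies $\omega(q_0)\bigl(\tilde Y(q_0)\bigr)\neq 0$. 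Third, apply the global approximation theorem for Beltrami fields with sharp decay (the second PDE pillar advertised in the introduction) to obtain $Y\in\mathscr B(\R^3)$ that approximates $\tilde Y$ on a large compact neighbourhood $U_0$ of $q_0$ while retaining the decay $(1+\|x\|)^{-1}$ globally. The non-degeneracy of the unfolding of $X_p = X+pY$ then follows from $M(Y)\neq 0$.

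\textbf{Main obstacle.} The delicate point is controlling the tails: the approximation theorem gives smallness only on compact sets, so a priori the contributions to $M(Y)$ coming from the long excursions of $\gamma$ near $P$ could cancel the localized contribution near $q_0$. The mechanism to close this loop is two-fold: (i) the stable/unstable bundles along $\gamma$ contract exponentially near the periodic orbit $P$, so $\omega(\gamma(t))$ decays exponentially as $|t|\to\infty$ and the Melnikov integral is absolutely convergent, dominated by a compact portion $K\subset\gamma$; (ii) one chooses the prescribed Cauchy datum to be a localized bump and then rescales $p\mapsto\varepsilon p$, which leaves the leading-order splitting non-zero while absorbing any residual global error. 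The genuine difficulty is that the sharp decay $(1+\|x\|)^{-1}$ is borderline non-integrable on three-dimensional annuli, so the quantitative balance between the prescribed local profile, the error of the global approximation, and the decay of $\omega$ along the tails is subtle; in our view, reconciling this borderline decay with the precise quantitative form of the global approximation theorem is the main technical obstruction to a proof of the conjecture.
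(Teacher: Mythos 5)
This statement is labelled a \emph{Conjecture} in the paper, and the paper does not prove it. The authors explicitly single it out as an open problem, writing that they ``believe that this conjecture is the main issue to prove that conservative Kupka-Smale systems are dense among Beltrami fields.'' Theorem~A sidesteps it entirely by \emph{building the non-degeneracy hypothesis into the definition} of the Beltrami--Newhouse domain $\cal N_{\mathscr B}(\R^3)$ (the interior of the closure of those $X\in\mathscr B(\R^3)$ that exhibit a tangency \emph{and} for which a non-degenerate Beltrami unfolding exists), and then establishing non-emptiness of that domain starting from a double strong heteroclinic link. Your proposal therefore cannot ``take essentially the same approach as the paper,'' because there is no paper proof to compare against.

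The gap in your argument is concrete and is exactly the kind of obstruction the authors are pointing at. You want to produce $Y\in\mathscr B(\R^3)$ with $M(Y)\neq 0$, where
\[
M(Y)=\int_{-\infty}^{+\infty}\omega\bigl(\gamma(t)\bigr)\bigl(Y(\gamma(t))\bigr)\,dt.
\]
You prescribe Cauchy data on a small transverse disk $\Sigma_0\ni q_0$, get a \emph{local} Beltrami field $\tilde Y$ on a small neighbourhood $\Omega$ of $\Sigma_0$, and invoke the Global Approximation Theorem to extend. But Theorem~\ref{GAT} gives an approximation only on a prescribed compact $K$ and \emph{no quantitative control whatsoever} on $Y$ outside $K$; the sharp-decay constant of the resulting $Y$ is not bounded in terms of $N$. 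Since $\tilde Y$ is only defined near $q_0$ (a Beltrami field cannot be cut off, by unique continuation for elliptic equations), you cannot include a full neighbourhood of $\bar\gamma$ in $K$. The natural remedy, Lemma~\ref{GATcoro}, requires two \emph{disjoint} compacts $K_+$ and $K_-$ with connected complements; but $\bar\gamma$ is connected, so $K_+\cup K_-$ cannot cover it, and $\gamma$ necessarily crosses the gap between $K_+$ and $K_-$ in a region where $\omega$ is of order one (you chose $q_0$ precisely where $\omega$ does not vanish) and where $Y$ is completely uncontrolled. Shrinking the gap does not help: the length of $\gamma$ inside the gap shrinks, but $\sup|Y|$ on the gap can blow up with $N$, and there is no estimate to balance the two. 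Your two proposed fixes do not close this. The exponential decay of $\omega$ controls only the deep tail near $P$, not the annular gap around $q_0$; and the rescaling $p\mapsto\varepsilon p$ multiplies $M(Y)$ by $\varepsilon$ but leaves the \emph{ratio} of the prescribed contribution to the uncontrolled one unchanged, so it cannot distinguish $M(Y)\neq 0$ from $M(Y)=0$. Your closing diagnosis about the ``borderline non-integrable'' decay $(1+\|x\|)^{-1}$ on annuli is a red herring: the Melnikov integral lives on the bounded set $\bar\gamma$, so the spatial decay rate at infinity is not what hurts you; what hurts you is the absence of a quantitative bound on $\sup_{\bar\gamma}|Y|$ from the global approximation step.

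For contrast, the reason the paper's mechanism works in Theorem~A is that a \emph{strong heteroclinic link} $\Gamma$ is a two-dimensional analytic cylinder, so the Melnikov operator factors through $W\mapsto W|_\Gamma$ and the normal component of a Beltrami field \emph{on the whole surface $\Gamma$} can be prescribed via the Cauchy--Kovalevskaya Theorem~\ref{cor:CK}. In Proposition~\ref{prop:Meln:smooth} the right inverse $\cal I(g)=\psi\cdot g\circ\pi\circ p_2$ is compactly supported on $\Gamma$, so after applying Theorem~\ref{main thm} the \emph{entire} Melnikov contribution is controlled — there is no uncontrolled tail to worry about. A generic quadratic homoclinic tangency offers no such surface: the homoclinic orbit $\gamma$ is one-dimensional, and there is no Cauchy--Kovalevskaya theorem that prescribes a Beltrami field along a curve. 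This geometric mismatch, rather than the decay rate, is the substantive obstruction, and it is why the statement is posed as a conjecture.
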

We believe that this conjecture is the main issue to prove that conservative Kupka-Smale systems are dense among Beltrami fields. Such density has been shown for other rigid classes of dynamics, see \cite{BT86,BHI05}.

\medskip

Now let us shed light on the richness of the generic dynamics in $\cal N_\mathscr B(\R^3)$. In short, our second main  \cref{ThmB} states that a dense subset of $\cal N_\mathscr B(\R^3)$ is universal: its Poincar\'e return maps realize a dense subset of conservative dynamics from the disk. This will imply many new phenomena, including Arnold's vision\footnote{Theorem B is  stronger than Arnold's vision: there is no example of universal dynamics in celestial mechanics.}.
\begin{definition}\label{def renormalized ietration}
A \emph{renormalized iteration} of a $C_\leb^\infty$-vector field $X$ on an open subset  $U\subset \R^3$ is
a Poincar\'e return map $G$ at a transverse disk in rescaled coordinates. More precisely, there is an embedding $\phi : \R^2 \hookrightarrow U$,
$\Sigma :=\phi (\R^2)$ is transverse to $X$, such that the Poincar\'e  map $  F$ from $\check \Sigma :=\phi (\D)$ into $\Sigma$ is well defined and
the map $  \phi ^{-1}\circ F\circ \phi|_\D $ is symplectic and equals $G$.
 \end{definition}

\begin{definition}[Compare to \cite{BD00,Tu03,GST07}]\label{def universal}
	A $C_\leb^\infty$-vector field $X$ on $U\subset \R^3$ is \emph{universal} if  the set of
renormalized iterations of $X$ is dense in $ \Diff^\infty_{\Leb+}(\D, \R^2)$.
The vector field $X$ is \emph{compactly universal} if there exists an open bounded subset $U'\subset U$ such that $X|_{U'}$ is universal.
\end{definition}

Gonchenko-Shilnikov-Turaev's Theorem \cite{GST07} implies that a dense subset of $\cal N_\Gamma(\R^3)$
is formed by compactly universal dynamics. We propose the counterpart of this result in the much more rigid world of Euclidean Beltrami fields:
\begin{theo}\label{ThmB}  There is a topologically generic\footnote{A topologically generic subset is a set equal to a
countable intersection of open and dense subsets.} subset $\mathscr G$ of $\cal N_\mathscr{B}(\R^3)$ formed by compactly universal vector fields.  \end{theo}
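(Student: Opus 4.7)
My plan follows the strategy of Gonchenko--Shilnikov--Turaev \cite{GST07}, adapted to the rigid Beltrami category, via a Baire-category argument. The space $\mathscr B(\R^3)$ is a complete metric space in the sharp-decay $C^\infty$-topology, so $\cal N_{\mathscr B}(\R^3)$ is a Baire space. Fix a countable $C^\infty$-dense sequence $\{G_n\}_{n\geq 1}$ in $\Diff^\infty_{\Leb+}(\D,\R^2)$ and an exhaustion of $\R^3$ by open balls $\{B_R\}_{R\geq 1}$. For each $(R,n,k)\in\N^3$ set
\[
\mathcal U_{R,n,k}:=\bigl\{X\in\cal N_{\mathscr B}(\R^3):\text{some renormalized iteration } G \text{ of }X \text{ with }\phi(\D)\subset B_R\text{ satisfies }\|G-G_n\|_{C^k}<1/k\bigr\}.
\]
Continuous dependence of Poincar\'e return maps on $X$ makes each $\mathcal U_{R,n,k}$ open; any $X\in\bigcap_{n,k}\mathcal U_{R,n,k}$ is compactly universal with $U'=B_R$. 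The goal is to build a dense $G_\delta$ subset $\mathscr G$ of $\cal N_{\mathscr B}(\R^3)$ that lies inside $\bigcup_R\bigcap_{n,k}\mathcal U_{R,n,k}$.

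The core density step I would prove is: for every nonempty open $\mathcal W\subset\cal N_{\mathscr B}(\R^3)$ there exist $R$ and a nonempty open $\mathcal W'\subset\mathcal W$ such that $\bigcap_{n,k}\mathcal U_{R,n,k}$ is residual in $\mathcal W'$. By the definition of $\cal N_{\mathscr B}(\R^3)$, $\mathcal W$ contains a Beltrami field $X_0$ with a quadratic homoclinic tangency at a saddle periodic orbit, unfolded non-degenerately inside $\mathscr B(\R^3)$ by a family $(X_p)_{p\in\I}$. The homoclinic tangle is compact; pick $R$ so that an open neighborhood of it is contained in $B_R$. On a small open $\mathcal W'\ni X_0$ the saddle and its local invariant manifolds persist inside $B_R$, and the Poincar\'e first-return map $F_p$ of $X_p$ to a small transverse disk is a smooth area-preserving surface diffeomorphism (because $X_p$ is divergence-free) that inherits the non-degenerate quadratic tangency unfolding from $(X_p)$.

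The conservative Gonchenko--Shilnikov--Turaev universality theorem applied to $(F_p)_p$ then yields, for each $(n,k)$, a residual set of parameters $p\in\I$ such that a suitably rescaled $N$-th iterate of $F_p$ near the homoclinic point approximates $G_n$ within $1/k$ in $C^k$, for some return time $N$. Since all orbit segments of these renormalized iterations stay in a bounded neighborhood of the tangle, and hence inside $B_R$, the corresponding $X_p$ lies in $\mathcal U_{R,n,k}$. Residuality in $\I$ is preserved under countable intersection; to promote it to residuality in $\mathcal W'\subset\mathscr B(\R^3)$ I would use the perturbation tools of Theorem~A (global approximation and Cauchy--Kovalevskaya) to embed $(X_p)$ into arbitrarily large finite-parameter Beltrami families transverse to the GST genericity conditions, and then apply a standard parameterized Baire/jet-transversality argument. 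A countable covering of $\cal N_{\mathscr B}(\R^3)$ by such pairs $(\mathcal W',R)$ then delivers the dense $G_\delta$ subset $\mathscr G$.

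The principal obstacle is that GST is stated for unconstrained smooth families of symplectic diffeomorphisms, whereas Beltrami perturbations must solve the overdetermined PDE $\curl X=X$ on $\R^3$ with $(1+\|x\|)^{-1}$ decay. Two observations overcome this. First, the very definition of $\cal N_{\mathscr B}(\R^3)$---whose non-emptiness is Theorem~A---provides the Beltrami unfolding of the tangency, so no such family needs to be synthesized. Second, the GST renormalization is intrinsic to the Poincar\'e return map and only probes the flow in a fixed bounded neighborhood of the tangle, so no further ambient Beltrami perturbation is required beyond the finite-parameter families produced by the perturbation machinery of Theorem~A; this is what allows the very rigid Beltrami space to inherit the wild universality of the generic Newhouse domain.
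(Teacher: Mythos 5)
The proposal correctly sets up the Baire-category scaffolding (the open sets $\mathcal U_{R,n,k}$, the exhaustion by balls, the reduction to a density statement) and correctly identifies that one needs to pass from a quadratic-tangency unfolding to renormalizations realizing arbitrary conservative maps. However there is a genuine and central gap in the density step, in two related places.

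First, you apply ``the conservative Gonchenko--Shilnikov--Turaev universality theorem applied to $(F_p)_p$'' to a \emph{one-parameter} family and conclude that for generic $p\in\I$ the map $F_p$ has renormalized iterations approximating $G_n$ to order $1/k$. That is not what GST gives. A one-parameter non-degenerate unfolding of a single quadratic tangency produces, after renormalization, only the one-parameter conservative H\'enon family $H_a(x,y)=(y,-x+a-y^2)$ (this is Theorem~\ref{Lemma6GST bis} with $k=1$). To approximate an arbitrary $G_n\in\Diff^\infty_{\Leb+}(\D,\R^2)$ one needs, via Turaev's Theorem~\ref{Lemma9GST}, H\'enon maps of \emph{arbitrarily high degree} $d$, and realizing these as renormalizations requires a $d$-parameter family unfolding non-degenerately a homoclinic tangency of multiplicity $d$ (Theorem~\ref{Lemma6GST bis}), which in turn, by Theorem~\ref{Thmlem5GST07}, requires a $d$-parameter family unfolding $d$ \emph{distinct} quadratic homoclinic tangencies of the same saddle orbit. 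So GST is not a black box applied to a single one-parameter unfolding; its hypothesis is precisely the GST-wildness of $\cal N_{\mathscr B}(\R^3)$: existence, in every open subset and for every $J$, of a $J$-parameter family unfolding a $J$-tuple of tangencies.

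Second, the sentence ``I would use the perturbation tools of Theorem~A... to embed $(X_p)$ into arbitrarily large finite-parameter Beltrami families transverse to the GST genericity conditions'' treats as routine exactly the step that is the paper's principal technical contribution (Theorem~\ref{ThmD}). The obstruction is that Beltrami perturbations cannot be localized: a perturbation designed to move one tangency orbit necessarily affects the whole vector field, and there is no a priori way to produce $J$ perturbation directions acting independently on $J$ tangencies. The paper's resolution is geometric: it invokes Duarte's Theorem~\ref{Duarte thm} and Corollary~\ref{Duarte coro} to produce, at nearby parameters, \emph{disjoint} wild horseshoes whose orbit closures together with their tangency orbits form disjoint, totally disconnected compact sets $C_p$ and $C'_p$ with connected complement (Facts~\ref{disjoint Kp}--\ref{semi continuous} and Lemma~\ref{lemma conn compl}); only then does the two-region Global Approximation Lemma~\ref{GATcoro} supply a new Beltrami direction $X_2$ that is small near one region and close to $X_1$ on the other, yielding the independent two-parameter unfolding, and this is iterated to get $J$ parameters. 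None of this is a jet-transversality or standard parameterized Baire argument; without it, the density step and hence the whole $G_\delta$ construction collapses.

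In short, your outer Baire structure is fine, but the heart of the proof is establishing that $\cal N_{\mathscr B}(\R^3)$ is GST-wild (Theorem~\ref{ThmD}), and the proposal neither proves this nor correctly identifies it as the bottleneck; instead it misapplies GST to a single one-parameter family and waves away the multi-parameter construction.
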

Let us introduce a generalization of the above definition and result for families of dynamics.  
For $J\ge 0$, let
$ \Diff^\infty_{\Leb+}(\D, \R^2)_{\I^J}$ denote the space of smooth families $(f_p)_{p\in \I^J}$ of maps $f_p\in  \Diff^\infty_{\Leb+}(\D, \R^2)$. We endow this space with the topology given by the canonical  inclusion:
\[  \Diff^\infty_{\Leb+}(\D, \R^2)_{\I^J}\hookrightarrow   C^\infty(\I^J\times \D, \R^2).\]

\begin{definition}\label{def para universal}
Let $\cal F$ be a Fr\' echet space formed by smooth conservative flows of $\R^3$.

 A subset of $\mathscr E\subset\cal F$ is \emph{$J$-para-universal} for $J\ge 0$  if, for any non-empty open subsets $\mathcal V$ of $\mathscr E$ and    $\cal O_J$ of  $\Diff^\infty_{\Leb+}(\D, \R^2)_{\I^J}$,   there exists a smooth family $(X_p)_{p\in \I^J}$  formed by vector fields in $\mathcal V$ and  such that   a smooth family of renormalized iterations of $(X_p)_{p\in\I^j}$ is in $\cal O_J$.

 The subset $\mathscr E\subset \cal F$ is \emph{para-universal} if  it is   $J$-para-universal for every $J\ge 0$.

  The subset $\mathscr E\subset \cal F$ is \emph{compactly para-universal}  if there exists a bounded open subset $U\subset \R^3$ such that
  $\mathscr E$ satisfies the condition of  para-universality with  orbits of the renomalizations included in $U$.

%
%
%
%
%
\end{definition}
\begin{remark}\label{para universal are universal}
If $\mathscr E$ is a Baire space and is (resp. compactly) para-universal, then a topologically generic subset $\mathscr G\subset \mathscr E$ is formed by (resp. compactly) universal vector fields. Indeed, by $0$-para-universality of every non-empty open subset of $\mathscr E$,  for every non-empty open set $\cal O$ of  $\Diff^\infty_{\Leb+}(\D, \R^2)$, there is a dense subset $\mathscr D$ of vector fields in $\mathscr E$ which display a Poincar\'e return map in $\cal O$. As $\cal O$ is open, $\mathscr D$ can be taken open and dense. We conclude by using that $\Diff^\infty_{\Leb+}(\D, \R^2)$ has a countable base of open subsets.   \end{remark}

By the latter remark, the following  generalizes \cref{ThmB} and enables to capture any finite codimensional phenomenon of $\Diff^\infty_{\Leb+}(\D, \R^2)$ in a small open set of Beltrami fields contained in $\mathcal{N}_\mathscr{B}(\R^3)$. In particular, this implies that any bifurcation of conservative maps of the disk that is a finite codimensional phenomenon appears in any open subset of $\cal N_{\mathscr B}(\R^3)$ as a renormalization.

\begin{theo}\label{ThmC}
The subset  $\cal N_{\mathscr B}(\R^3)$ is compactly para-universal.
\end{theo}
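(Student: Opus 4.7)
The plan is to combine three ingredients: the non-emptiness result of \cref{ThmA}, the parametric version of the Gonchenko-Shilnikov-Turaev renormalization theorem for families unfolding homoclinic tangencies with multiple moduli, and two PDE tools specific to the Beltrami class (Cauchy-Kovalevskaya and global approximation). Fix a non-empty open subset $\mathcal V \subset \cal N_{\mathscr B}(\R^3)$, a non-empty open subset $\cal O_J \subset \Diff^\infty_{\Leb+}(\D, \R^2)_{\I^J}$, and a target family $(g_p)_{p\in\I^J}\in \cal O_J$. I want to produce a smooth family $(X_p)_{p\in\I^J}$ in $\mathcal V$ whose family of renormalized return maps at a common transverse section $\Sigma$ approximates $(g_p)_{p\in\I^J}$ in the $C^\infty$ topology, with all involved orbits contained in a fixed bounded open $U\subset \R^3$.

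The first step is to pick a starting Beltrami field $X_0\in \mathcal V$ whose dynamics is rich enough. By definition of $\cal N_{\mathscr B}(\R^3)$, the set $\mathcal V$ meets the closure of the Beltrami fields displaying a saddle periodic orbit with a non-degenerately unfoldable quadratic homoclinic tangency. Applying Duarte's theorem to such a field, I may further assume that $X_0$ carries a thick Smale horseshoe and, after a small Beltrami perturbation produced by the tools of \cref{ThmA}, that its homoclinic configuration has at least $J$ independent moduli. The parametric Gonchenko-Shilnikov-Turaev renormalization theorem then provides an abstract smooth $J$-parameter unfolding $(\Phi_p)_{p\in\I^J}$ of $X_0$ in the class of smooth conservative vector fields such that a sequence of renormalized return maps of $(\Phi_p)_{p\in\I^J}$ at a suitable section contained in a tubular neighborhood $U$ of the homoclinic loop converges, in the $C^\infty$ topology on $\I^J\times \D$, into $\cal O_J$.

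The main obstacle, and the heart of the proof, is to realize the abstract family $(\Phi_p)_{p\in\I^J}$ inside the Beltrami class $\mathscr B(\R^3)$ with sharp decay, while staying arbitrarily close to $X_0$ uniformly in $p$. This is a genuine PDE constraint, since the increments $\Phi_p - X_0$ must solve the linear Beltrami equation with sharp decay. I would resolve this in two steps. First, using a Cauchy-Kovalevskaya theorem for the Beltrami operator $\curl - \Id$, I realize prescribed smooth $J$-parameter Cauchy data on an analytic disk transverse to $X_0$ near the tangency point, producing a local $J$-parameter family of Beltrami fields achieving the prescribed modulus variation at the tangency. Second, I approximate this locally defined family, on any prescribed compact set containing the homoclinic orbit and uniformly in $p\in\I^J$, by globally defined Beltrami fields in $\mathscr B(\R^3)$ enjoying the sharp decay at infinity; this is achieved by applying the global approximation theorem for Beltrami fields in a parametric version.

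Once the family $(X_p)_{p\in\I^J}\subset \mathcal V$ is so constructed, it unfolds the homoclinic tangle of $X_0$ non-degenerately in all $J$ moduli, so the parametric Gonchenko-Shilnikov-Turaev scheme applies and a family of renormalized return maps at a section $\Sigma\subset U$ lies in $\cal O_J$. As all the relevant orbits remain confined to the tubular neighborhood $U$ of the homoclinic loop, this gives compact para-universality. The genuine analytical difficulty is to make the Cauchy-Kovalevskaya plus global approximation scheme parametric and quantitative, so as to control simultaneously, in all parameters $p\in\I^J$ and on a compact exhaustion of $\R^3$, both the closeness of the perturbed family to $X_0$ and the non-degeneracy of the moduli at the tangency. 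I expect these controls to follow from a careful Fr\'echet implicit-function-type argument combined with the linearity of the Beltrami equation.
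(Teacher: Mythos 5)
Your high-level strategy (GST theory plus PDE perturbation tools for Beltrami fields) is in the right spirit, and you correctly identify that the heart of the matter is reproducing a $J$-parameter bifurcation within the rigid Beltrami class. However, there is a genuine gap in the key step, and your route diverges in a way that does not close.

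The paper does not prove \cref{ThmC} by first producing an abstract $J$-parameter unfolding $(\Phi_p)$ in the full smooth conservative class and then approximating it in $\mathscr{B}(\R^3)$ via Cauchy-Kovalevskaya plus global approximation. That approach fails for a structural reason: Beltrami perturbations cannot be localized, so any perturbation you introduce to adjust one modulus of the homoclinic tangle necessarily perturbs all the others. A Fr\'echet implicit-function argument does not resolve this, because you have no a priori way to decouple the moduli. Moreover, your proposed Cauchy-Kovalevskaya step is posed on a disk transverse to the flow near the tangency point, but the moduli are Melnikov-type integrals along entire orbits; the paper's Cauchy-Kovalevskaya theorem (\cref{cor:CK}) is applied to a \emph{flow-invariant cylinder} (a strong heteroclinic link) precisely so that the normal Cauchy datum feeds directly into the Melnikov operator via \cref{prop:Meln:smooth}. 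Prescribing data on a small transverse disk near a tangency does not give you that control, and in any case these CK/Melnikov tools are used in the paper only to prove \cref{ThmA}, not \cref{ThmC}.

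What the paper actually does is reduce \cref{ThmC} to \cref{ThmD} (that $\cal N_{\mathscr B}(\R^3)$ is GST wild) together with the abstract Gonchenko-Shilnikov-Turaev \cref{TGST}. The genuinely new content is \cref{ThmD}, whose proof hinges on an idea absent from your proposal: one applies Duarte's theorem twice (\cref{Duarte thm} together with \cref{Duarte coro}) to obtain two \emph{disjoint} but homoclinically related wild horseshoes whose closed orbit-sets of tangencies $C_p$ and $C'_p$ are disjoint compacta with connected complement (\cref{lemma conn compl}), and then invokes \cref{GATcoro}---global approximation on two disjoint compact sets with connected complement---to manufacture a Beltrami field that is small near $C_p$ and close to the given unfolding direction near $C'_p$. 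This is what makes the two moduli independent despite the global nature of Beltrami perturbations. The construction is then iterated to get $J$ independent unfoldings (\cref{premain2}). Your proposal has no analogue of this disjoint-horseshoe / split-global-approximation mechanism, so the claim that you can realize $J$ independent moduli inside $\mathscr{B}(\R^3)$ is unsupported. You would need to discover and carry out this Duarte-plus-$\mathrm{GAT}$-on-two-compacta argument, or a genuine substitute for it, before the proof goes through.
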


For the proof of these theorems, we introduce new general perturbation methods in hydrodynamics allowing to join   PDE techniques and  deep results from bifurcation theory in dynamical systems.
These perturbation methods are implemented for 3-dimensional Beltrami flows and start with two tools from PDEs: global approximation and Cauchy-Kovalevskaya theorems. These tools are rather general and appear not only in Beltrami fields but also in any linear second order elliptic or parabolic equation with analytic coefficients \cite{Bro62,E-GF-PS}.

Let us also mention that the origin of the global approximation theorems in PDEs is Runge's theorem in complex analysis. The latter was used by Buzzard \cite{Bu97} to find the first example of a wild (conservative) polynomial automorphism of $\C^2$. Hence it is natural to think that the methods presented in this work could be (non-trivially) adapted to prove a complex and discrete time counterpart of our main theorem: the local density of compactly universal maps among transcendental conservative complex automorphisms of $\C^2$. In this setting, universal means that any complex and conservative polynomial automorphism can be approximated by a  renormalized iteration of the dynamics.

\medskip

To prove Theorems \ref{ThmB} and \ref{ThmC}, we will extract from the Gonchenko-Shilnikov-Turaev theory \cite{GST07} that it suffices to establish the existence of  a non-degenerate unfolding of a multiple homoclinic tangency (see \cref{multi homo}). To show such an existence, we will make use of the geometry of wild hyperbolic sets (see \cref{def wild}) unveiled by the seminal works of Newhouse and Duarte \cite{Ne2,Du} to implement a new perturbative method based on the global approximation theorem \cite{EncPS,EncPS15} (which is used as a black box). Indeed, the main difficulty will be to duplicate a homoclinic unfolding in the very rigid class of Euclidean Beltrami fields. In order to do so we will have to display disjoint and homoclinically related horseshoes which can be separated by non nested compact surfaces. See \cref{creation disjoint wild horseshoe}.

To prove \cref{ThmA}, we will give a self contained extension of the Cauchy-Kovalevskaya theorem for curl of \cite{EncPS} in \cref{section CK and GAT}, which we will use (again as a black box) to unfold heteroclinic links (see \cref{sketch thm A}). Then the main difficulty will be to handle this simultaneously for two heteroclinic links, while interferences appear and  might prevent from the creation of the sought unfolding. To perform this we will use a new linear algebra trick applied to an inverse of the Melnikov operator. See \cref{sec:LinALg}.

\section{Contributions to the state of the art on complexity of steady Euler flows}
\label{state of the art}
Showing complexity in fluid mechanics is a long standing problem.
It has inspired many mathematicians and physicists such as Ruelle, Takens \cite{RT71}, Newhouse \cite{NRT78},  Feigenbaum \cite{Fe79}, Coullet, Tresser and Arneodo \cite{CTA80}, who have proposed a link between complexity in fluid mechanics and renormalization in dynamical systems. One aspect of this program is to understand the renormalized iterations of a fluid flow. For instance, \cite{RT71} stated that the Navier-Stokes equations in the space of velocity fields could have a renormalization displaying a strange attractor. This also leads to the famous problem on the typicality of the set of dynamical systems which can be
obtained after renormalization of a close to the identity map. This was also influential for the works  \cite{BD00,Tu03,GST07}, which set up the concept of universality in dynamics.

  In more flexible spaces than the Euclidean 3-dimensional one, let us mention the work \cite{Torres} of Torres de Lizaur who constructed an embedding of any smooth flow into the time-dependent evolution of the Euler PDE on high-dimensional Riemannian manifolds. See also \cite{Vak21} for other embedding results in the context of the Oberbeck-Boussinesq model of the Navier-Stokes equations (on $2$-dimensional domains) with external parameters (a space-dependent external force and heat source).

As we mentioned in the previous section, we use the most recent mathematical developments in bifurcation theory to establish the universality, and so the extreme complexity, of some steady fluid flows. More generally, the present work is the first which shows the existence of an Euclidean (not forced) fluid motion which is universal (in the sense of Definition~\ref{def universal}). As we saw, this question goes back, at least, to the foundational works of Arnold \cite{Ar65,Ar66}, and was numerically explored by several authors \cite{He66}. Another notion of universality based on Turing completeness  has been promoted by the recent Tao's program \cite{Tao18} to address the blow-up problem for the Euler and Navier-Stokes equations: this is a priori independent from the universality studied in the present paper.

Up to now, the previous works on dynamical phenomena exhibited by Beltrami fields in Euclidean space $\R^3$ include:
\begin{enumerate}
\item Periodic orbits (elliptic or hyperbolic) and invariant tori of arbitrary topology \cite{EncPS,EncPS15}.
\item Invariant tori bounding regions that contain any prescribed number of hyperbolic orbits (but possibly not homoclinically related) \cite{Advances}.
\item Heteroclinic connections and Smale horseshoes, as an auxiliary result of \cite{EPSR}.
\item A universal Turing machine contained in an invariant plane, see~\cite{CMPS21}.
\end{enumerate}
It is also known that these structures occur with probability $1$ as a consequence of the theory of Gaussian random Beltrami fields \cite{EPSR}. While a union of these objects forms Arnold's picture, we emphasize that, in general, the previous techniques do not allow to embed them in a Beltrami field in such a way that the given relative position are respected (for instance, having the horseshoe inside the torus).

In particular, it was missing to construct an Euclidean Beltrami field exhibiting Arnold's picture shown in \cref{ImageArnold} as a Poincar\'e section. In contrast, this vision is now a trivial consequence of \cref{ThmB}.
Indeed by KAM theorem, Arnold's picture is persistent by $C^\infty_\Leb$-perturbations while
 \cref{ThmB}  shows that a locally generic Beltrami field  has its set of renormalizations which is dense in the space of conservative  maps from the disk to the plane.

Moreover \cref{ThmC} goes beyond Arnold's vision in the sense that it establishes the existence of parametric families of Beltrami fields whose corresponding families of renormalized iterations are dense in the set of families of conservative diffeomorphisms from the disk into the plane. For instance, as a consequence of
Theorems \ref{ThmA} and
\ref{ThmC} some of the dynamical phenomena that can arise in Beltrami fields are:
\begin{enumerate}
\item By Duarte's theorem \cite{Du}, there exists a topologically generic subset $\cal G$ of $\cal N_{\mathscr{B}}(\R^3)$ formed by vector fields exhibiting a horseshoe accumulated by non-degenerate elliptic periodic orbits.
\item By Gorodetski's theorem \cite{Go12} the subset $\cal G$ can be chosen so that for every vector field in $\cal G$ there exists an increasing sequence of such horseshoes whose Hausdorff dimension converges to $3$.
\item  There exists a dense subset $\cal D$ in $\cal N_{\mathscr{B}}(\R^3)$ formed by vector fields exhibiting homoclinic tangencies of arbitrarily high multiplicity.
\item For any conservative Taylor expansion $J$ there exists a dense subset in $\cal N_{\mathscr{B}}(\R^3)$ formed by vector fields with a renormalized iteration exhibiting a fixed point with Taylor expansion equal to $J$.
\item 
There is a dense subset of  $\cal N_{\mathscr{B}}(\R^3)$ formed by Beltrami field $X$ with the following property.  There exists a saddle orbit $O$  such that for every $k\ge 0$, there exists a non-degenerate elliptic orbit $O_k$  so that $(O_k)_{k\ge 0}$ converges to a compact set containing $O$ and the period of $O_k$ is asymptotic to a certain constant plus  $k$-times the period of $O$  \cite{GS01rus,GS03,GG09}. Moreover, there is a  $2$-codimensional (Fr\' echet) manifold in $\cal N_{\mathscr{B}}(\R^3)$ on which all  the elliptic orbits $O_k$  and the saddle orbit $O$ persist and satisfy the same asymptotic.
\end{enumerate}

\cref{ThmC} also implies that any conservative finite codimensional dynamical phenomena appear densely in the Newhouse domain $\cal N_{\mathscr{B}}(\R^3)$ of Beltrami fields. Let us emphasize that \cref{ThmC} provides more a lower bound on the richness and complexity of the Beltrami dynamics, rather than a complete understanding of it: a ``typical'' Beltrami field in $\cal N_{\mathscr{B}}(\R^3)$ encompasses the dynamical complexity of a dense subset of $\Diff^\infty_{\leb+}(\D, \R^2)$. Additionally, as universality was used in \cite{BerTur} to prove Herman's positive entropy conjecture (see also \cite{Ber21}), our techniques might be useful to address the following open problem:
\begin{problem}
Show that there exists an Euclidean Beltrami field with positive metric entropy, in the sense that there exists a bounded invariant set of positive Lebesgue measure formed by points with positive Lyapunov exponent.
\end{problem}
Another open problem is that of the statistical complexity of a typical Beltrami field, which is measured by the emergence \cite{Ber17}. This complexity has been shown to be huge for a generic subset of $\Diff^\infty_{\leb+}(\D, \R^2)$ \cite{BB21}:
\begin{problem}
Show that there exists an Euclidean Beltrami field with a high emergence, in the sense that there exists a bounded invariant set of positive Lebesgue measure so that the ergodic decomposition of the flow restricted to this set is infinite dimensional.
\end{problem}
More generally, a natural question that remains open is whether there exists a dynamics of
$\Diff^\omega_{\leb+}(\D, \R^2)$ which is not a renormalized iteration of an Euclidean Beltrami field. On the other hand, it is well known that not every analytic volume preserving flow is orbit equivalent to a Beltrami field, see e.g. \cite{CV17}.

In strong contrast, using the flexibility of contact geometry, it was shown in \cite{PNAS} that, if one allows to deform the metric on $\R^3$ (no longer Euclidean), then any dynamics of $\Diff^\infty_{\leb+}(\D, \R^2)$ is a renormalized iteration of a Beltrami field for some adapted Riemannian metric. In the context of high-dimensional fluids, a new $h$-principle for Reeb flows allows one to embed any finite dimensional dynamics (possibly dissipative) as an invariant subset of a Beltrami field for some special Riemannian metric \cite{Cardona,CMPP}.

We finish this section recalling that every finite dimensional family of Euclidean Beltrami fields can be approximated by (the localization) of a finite dimensional family of Beltrami fields on the flat torus $\T^3$ (or respectively on the round sphere $\mathbb S^3$) \cite{EPSTL17}. Accordingly, a direct implication of \cref{ThmC} is that Beltrami fields on $\T^3$ or $\mathbb S^3$ may exhibit (in particular) any finite codimensional phenomena of $\Diff^\infty_{\leb+}(\D, \R^2)$.

\section{Structure of the proof}
\subsection{Perturbations techniques in the space of Beltrami fields}
So far the study of bifurcations of Beltrami fields were performed using purely analytic methods, at the unfolding of some integrable systems; for example, in \cite{Advances} the authors used the subharmonic Melnikov technique to bifurcate hyperbolic periodic orbits from resonant invariant tori. In this work we introduce new geometric methods to perturb systems which may be non-integrable.

A main issue to develop a general perturbation theory of (Euclidean) Beltrami fields is that we cannot localize the perturbations as one does in differentiable dynamical systems. Although this problem appears also for real analytic dynamics, these form a dense subset of differentiable dynamics, and likewise for the corresponding spaces of parameter families. So one can easily transfer any finite codimensional phenomenon from the differentiable setting to the real analytic setting. In contrast, Beltrami fields are not dense in the space of smooth conservative vector fields (the PDE equation is an infinite codimensional condition). The following tool will be used as an ersatz for local perturbation techniques:

\begin{theorem}[Global Approximation Theorem {\cite[Thm 8.3]{EncPS15}}]\label{GAT}
Let $K\subset \R^3$ be a compact set whose complement is connected. Let $X$ be a Beltrami field on an open neighborhood of $K$. Then for every $N\ge 1$, there exists a    Beltrami field  $ \tilde X $ with $\tilde X \in \mathscr B(\R^3)$ such that:
\[\| D^\alpha X(z)- D^\alpha \tilde X(z)\|<\frac1N \; ,\quad \forall \alpha \in \{0,1,\dots, N\}^3\qand \forall z\in K\; .\]
\end{theorem}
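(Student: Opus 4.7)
The strategy is to split the approximation into two successive $\varepsilon/2$ steps: a Runge-type step that replaces the locally defined Beltrami field $X$ by a Beltrami field defined on all of $\R^3$, then a density step that replaces the resulting entire Beltrami field by one in $\mathscr B(\R^3)$, i.e., satisfying the sharp decay bound. For the first step I would slightly enlarge $K$ to a compact $K'$ still with connected complement, on a neighborhood of which $X$ is Beltrami, and aim to $C^N(K)$-approximate $X|_K$ by restrictions of entire Beltrami fields. By Hahn--Banach, it suffices to show that any continuous linear functional $\mu$ on the Banach space of $C^N$ vector fields on $K$ that annihilates every entire Beltrami field also annihilates $X|_K$. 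To this end I would form the dual potential
\begin{equation*}
u(z) := \bigl\langle \mu_y,\, \Gamma(y - z)\bigr\rangle, \qquad z \in \R^3 \setminus K,
\end{equation*}
where $\Gamma$ is a fundamental matrix for $\mathrm{curl} - \mathrm{Id}$ restricted to divergence-free fields, built from the Helmholtz fundamental solution $e^{i|x|}/(4\pi|x|)$ via the identity $\mathrm{curl}^2 = -\Delta$ on div-free fields. Expanding $\Gamma(y-z)$ for $|z|$ large as a convergent series of vector spherical harmonics in $y$ (each term an entire Beltrami field) shows, under the assumption on $\mu$, that $u$ vanishes near infinity. Since $K^c$ is connected and $u$ solves a homogeneous elliptic equation there, unique continuation forces $u\equiv 0$ on $\R^3\setminus K$, and the jump relations across $\partial K$ then yield $\mu(X) = 0$.

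\textbf{Step 2: sharp decay via Herglotz waves.} Once I have an entire Beltrami field $X^{(1)}$ that $C^N(K)$-approximates $X$, I would further approximate $X^{(1)}$ by a Herglotz-type Beltrami field
\begin{equation*}
X^{(2)}(x) := \Real \int_{S^2} \bigl(a(\xi) + i\,\xi\times a(\xi)\bigr)\, e^{i\,\xi\cdot x}\, d\sigma(\xi),
\end{equation*}
with $a \in C^\infty(S^2;\C^3)$ tangent to $S^2$. A direct computation (using $|\xi|=1$ and $\xi\cdot a(\xi)=0$) shows that $\mathrm{div}\,X^{(2)}=0$ and $\mathrm{curl}\,X^{(2)}=X^{(2)}$, and the method of stationary phase gives $\|D^\alpha X^{(2)}(x)\| = O((1+\|x\|)^{-1})$ for every multi-index $\alpha$, so $X^{(2)} \in \mathscr B(\R^3)$. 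Density of such fields in the $C^N_{\mathrm{loc}}$ topology of entire Beltrami fields follows by expanding $X^{(1)}$ on a large ball $B_R \supset K$ in the basis of vector spherical harmonics times spherical Bessel functions $j_\ell(|x|)$ (the modes regular at the origin), truncating the expansion to a finite sum, and rewriting each basis element as a Herglotz integral via the Jacobi--Anger identity for $e^{i\xi\cdot x}$.

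\textbf{Main obstacle.} The delicate step is the Runge-type approximation. General machinery for linear elliptic operators (Lax--Malgrange--Browder) is available, but one has to verify that it applies to the operator $\mathrm{curl} - \mathrm{Id}$ in the presence of the nonlocal divergence-free constraint, to construct a fundamental matrix $\Gamma$ compatible with that constraint, and to confirm that the dual potential $u$ has enough decay at infinity to trigger unique continuation --- it is precisely at this last point that the connectedness hypothesis on $K^c$ enters, exactly as in classical Runge theory. The Herglotz step is classical by comparison, and combining the two steps with $\varepsilon = 1/N$ yields the stated theorem.
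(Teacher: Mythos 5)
The paper does not prove this statement: it invokes it as a black box from \cite[Thm.\ 8.3]{EncPS15}, and the text explicitly remarks (just after \cref{main thm}) that this theorem and the Cauchy--Kovalevskaya result are the only PDE inputs, used without proof. So there is no ``paper's own proof'' to compare against; what you have sketched is the published argument of Enciso and Peralta-Salas. Your two-step structure (Lax--Malgrange duality to pass from a local to an entire Beltrami field, then approximation by decaying Beltrami--Herglotz fields) is the right one, and both the curl computation for $X^{(2)}$ (using $|\xi|=1$, $\xi\cdot a=0$ and $\xi\times(\xi\times a)=-a$) and the stationary-phase $O((1+\|x\|)^{-1})$ decay bound are correct. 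The place where your sketch, as written, would fail is the phrase ``fundamental matrix for $\mathrm{curl}-\mathrm{Id}$ restricted to divergence-free fields'': $\mathrm{curl}-\mathrm{Id}$ is not elliptic --- its principal symbol $i\xi\times\cdot$ annihilates the radial direction --- and divergence-freeness is a nonlocal constraint, so no fundamental matrix for that operator exists in the usual sense and Lax--Malgrange--Browder does not apply directly. The published route instead exploits $\mathrm{curl}^2=-\Delta$ on divergence-free fields to pass to the second-order (elliptic) Helmholtz operator $\Delta+\mathrm{Id}$, runs the scalar Lax--Malgrange duality there (the dual potential built from $e^{i|y-z|}/(4\pi|y-z|)$, vanishing for large $|z|$ via the Rayleigh expansion into $j_\ell$-modes, then unique continuation on the connected set $\R^3\setminus K$), and recovers a Beltrami field from the resulting global Helmholtz solution by applying $\mathrm{curl}+\mathrm{Id}$, with some care to keep the $C^N(K)$ error controlled. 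You flag this as the ``main obstacle,'' which is accurate; closing the gap requires the scalar reduction rather than a direct fundamental matrix for $\mathrm{curl}-\mathrm{Id}$.
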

Let us emphasize that the Global Approximation Theorem does not provide any control on the extended vector field $\tilde X$ on the complement of a neighborhood of $K$.

To increase the multiplicity of the unfolding of homoclinic tangencies, we will use the following immediate (although fruitful)  consequence of Theorem \ref{GAT}:

\begin{lemma}\label{GATcoro}
Let $K_+$ and $K_-$ be two disjoint compact subsets of $\R^3
$ with connected complement. Let $X$ be a Beltrami field on an open neighborhood of $K=K_+\sqcup K_-$.
 Then for every $N\ge 1$, there exist two  Beltrami fields  $ X_+ $ and $  X_-$
 in $ \mathscr B(\R^3)$ such that for any $\pm\in \{-,+\}$ and any  $\alpha\in \{0, \dots, N\}^3$:

 \[\| D^\alpha X_\pm  (z)- D^\alpha   X (z)\|<\frac1N \; ,\quad  z\in K_\pm \qand \| D^\alpha  X_\pm (z)\|<\frac1N \; \quad   z\in K\setminus K_\pm\; .\]
\end{lemma}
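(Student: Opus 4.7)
The plan is to realize Lemma \ref{GATcoro} as a single application of the Global Approximation Theorem \ref{GAT}, the only preparation being the construction of an auxiliary ``mask'' Beltrami field on a neighborhood of $K := K_+\cup K_-$ that already exhibits the desired profile. I will construct $X_+$ explicitly; the field $X_-$ is then obtained by swapping the roles of $+$ and $-$.

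First, since $K_+$ and $K_-$ are disjoint compact sets, I choose disjoint open sets $U_+\supset K_+$ and $U_-\supset K_-$ contained in the open neighborhood of $K$ on which $X$ is defined, and set
\[
Y_+(z)\;:=\;\begin{cases} X(z) & z\in U_+,\\ 0 & z\in U_-. \end{cases}
\]
The Beltrami equation $\curl V=V$ is linear, satisfied by $X$ on $U_+$ and trivially by the zero field on $U_-$, so $Y_+$ is a Beltrami field on the open neighborhood $U_+\sqcup U_-$ of $K$. Because the complement of $K$ in $\R^3$ is connected by hypothesis, Theorem \ref{GAT} applied to $Y_+$ yields, for any $N\ge 1$, a field $X_+\in\mathscr B(\R^3)$ with
\[
\|D^\alpha X_+(z)-D^\alpha Y_+(z)\|<1/N\quad\forall\,z\in K,\ \alpha\in\{0,\dots,N\}^3.
\]
Restricting to $K_+$, where $Y_+=X$, gives the first inequality of the lemma; restricting to $K\setminus K_+=K_-$, where $Y_+=0$, gives the second. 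The construction of $X_-$ is identical with the roles of $U_+$ and $U_-$ exchanged in the definition of the auxiliary mask.

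There is no substantive obstacle: the whole argument rests on the linearity of the Beltrami equation (which permits the ``cut-and-paste'' definition of $Y_+$ across the two components of $U_+\sqcup U_-$) together with the connected-complement hypothesis on $K$, which is exactly what is needed to invoke the Global Approximation Theorem. This is precisely why the authors flag the statement as an ``immediate'' consequence of Theorem \ref{GAT}. The content of the lemma lies entirely in its downstream use: by placing $K_+$ and $K_-$ near two different saddle orbits (or two different homoclinic points) it allows one to perform two essentially independent perturbations on otherwise globally coupled Beltrami data, a maneuver that is not directly available in the rigid Beltrami class and that is the engine behind the creation of disjoint homoclinically related horseshoes described later in the paper.
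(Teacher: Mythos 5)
Your argument follows the paper's own proof almost verbatim — define the mask field $Y_\pm = \mathbf 1_{V_\pm}\cdot X$ on a disjoint split of the neighborhood of $K$, then invoke the Global Approximation Theorem — except for one genuine gap. You assert that ``the complement of $K$ in $\R^3$ is connected by hypothesis,'' but that is \emph{not} the hypothesis: the lemma assumes only that each of $K_+$ and $K_-$ \emph{separately} has connected complement (and this is how the lemma is applied later in the paper, e.g.\ to $C_{p_0}$ and $C'_{p_0}$, where Lemma~\ref{lemma conn compl} is invoked for each of the two compacts individually). What must be shown, before Theorem~\ref{GAT} can be applied to $K=K_+\sqcup K_-$, is that the disjoint union of two compact subsets of $\R^3$ each with connected complement again has connected complement. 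This is not automatic (in general, disjointness of the sets does not help: think of two nested annuli in the plane), and the paper devotes Appendix~C to it, using Alexander duality together with the Mayer--Vietoris sequence for \v Cech cohomology to show $\tilde H_0(\mathbb S^3\setminus K;\mathbb Z)=\tilde H^{2}_{CE}(K_+;\mathbb Z)\oplus \tilde H^{2}_{CE}(K_-;\mathbb Z)=0$. Without this step your proof does not close; everything else matches the paper.
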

\begin{proof}
Let $V$ be the domain where the local Beltrami field $X$ is defined. It is an open neighborhood of $K$ and so it can be split into a disjoint union $V=: V_+\sqcup V_-$, where $V_+$ and $V_-$ are neighborhoods of $K_+$ and $K_-$ respectively.
For $\pm\in \{-,+\}$, to obtain the desired Beltrami field $X_\pm$, it suffices to apply \cref{GAT} to the local Beltrami field $1\!\! 1_{V_\pm}\cdot X$ on $V$.  To apply \cref{GAT} we only need to observe that $K$ has connected complement because it is the union of two disjoint compact subsets of $\R^3$ with connected complement. As we could not find any reference for this fact, we prove it in \cref{appendix classical topo}. \end{proof}
Interestingly, this corollary will lead us to explore the geometry of wild sets in order to isolate disjoint invariant compact subsets at which the unfolding will be split.
\medskip

While the Corollary \ref{GATcoro} creates two perturbations from a local Beltrami field, the next theorem provides a method to create Beltrami fields with a given normal component on a cylindrical surface:

\begin{theorem}\label{main thm} Let $\Gamma$ be an  analytic surface in $\R^3$  whose closure $\bar \Gamma$ is diffeomorphic to $\R/\Z\times \I$. For any nonempty open set $\mathcal N$ of  $C^\infty(\bar \Gamma,\R)$,  there exists a vector field $\tilde X\in \mathscr B(\R^3) $ such that the normal component of $\tilde X|_\Gamma$ on $\Gamma$ is in $\mathcal N$.
\end{theorem}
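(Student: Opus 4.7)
The plan is to reduce the problem to the Global Approximation Theorem \ref{GAT} after producing a local analytic Beltrami field with prescribed normal component along $\bar\Gamma$.

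Fix $f\in\mathcal N$. Since $\mathcal N$ is open in the $C^\infty$ topology on $C^\infty(\bar\Gamma,\R)$, there exist $N\geq 1$ and $\varepsilon>0$ such that every $g\in C^\infty(\bar\Gamma,\R)$ with $\|g-f\|_{C^N(\bar\Gamma)}<\varepsilon$ belongs to $\mathcal N$. As $\bar\Gamma$ is a compact real-analytic surface, real-analytic functions are $C^N$-dense in $C^\infty(\bar\Gamma,\R)$ (for instance by polynomial approximation on an analytic embedding into some $\R^n$, then restricting), so I pick a real-analytic $\tilde f$ on $\bar\Gamma$ with $\|\tilde f-f\|_{C^N}<\varepsilon/2$.

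Next, I would construct a real-analytic Beltrami field $X$ in an open neighborhood $U$ of $\bar\Gamma$ whose normal component on $\Gamma$ equals $\tilde f$. In analytic coordinates $(x,y,z)$ in which a patch of $\Gamma$ becomes $\{z=0\}$, the equation $\curl X=X$ together with its automatic consequence $\mathrm{div}\,X=0$ rewrites as the evolution system
\begin{align*}
\partial_z X_1 &= \partial_x X_3 + X_2,\\
\partial_z X_2 &= \partial_y X_3 - X_1,\\
\partial_z X_3 &= -\partial_x X_1 - \partial_y X_2,
\end{align*}
which is in Cauchy-Kovalevskaya normal form. The remaining (third) scalar curl equation reads $\partial_x X_2-\partial_y X_1=X_3$ on $\{z=0\}$ and is a constraint on the Cauchy data; a direct computation using the evolution above shows that the quantity $\partial_x X_2-\partial_y X_1-X_3$ has vanishing $z$-derivative, so the constraint is propagated. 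Choosing $X_3|_{z=0}=\tilde f$, $X_1|_{z=0}=0$, and $X_2|_{z=0}(x,y)=\int_0^x\tilde f(s,y)\,ds$ (all real-analytic) supplies admissible initial data, and the analytic Cauchy-Kovalevskaya theorem (in the curl-adapted form developed in \cref{section CK and GAT}) produces a local analytic Beltrami field near each point of $\bar\Gamma$. Compactness of $\bar\Gamma$ and the unique continuation property of analytic solutions on overlaps let me patch these into a single analytic Beltrami field $X$ on a neighborhood $U$ of $\bar\Gamma$ with $(X\cdot n)|_\Gamma=\tilde f$.

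Finally, I apply the Global Approximation Theorem \ref{GAT} to $X$ with $K:=\bar\Gamma$. The complement of $\bar\Gamma$ in $\R^3$ is connected because $\bar\Gamma$ is a compact codimension-one submanifold \emph{with nonempty boundary} (two circles): any two points of $\R^3\setminus\bar\Gamma$ can be joined by a path skirting one of the boundary circles (the analogous elementary argument is recorded in \cref{appendix classical topo}). Theorem \ref{GAT} then yields $\tilde X\in\mathscr B(\R^3)$ with $\|D^\alpha X-D^\alpha\tilde X\|<\varepsilon/(2C)$ on $\bar\Gamma$ for all $|\alpha|\leq N$, where $C$ bounds the $C^N$-norm of the analytic unit normal field on $\bar\Gamma$. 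The normal component of $\tilde X$ on $\Gamma$ therefore lies within $\varepsilon/2$ of $\tilde f$ in $C^N(\bar\Gamma)$, hence within $\varepsilon$ of $f$, and so it belongs to $\mathcal N$.

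The main obstacle is the middle step: producing an analytic Beltrami field on a neighborhood of the \emph{entire} cylinder $\bar\Gamma$ with the prescribed normal trace. The constraint $\partial_x X_2-\partial_y X_1=X_3$ on the initial surface is a genuine obstruction to naive freedom in Cauchy data, and globalizing the local analytic solutions over $\bar\Gamma$ rests on analytic uniqueness on overlapping charts; this is exactly the content of the curl-adapted Cauchy-Kovalevskaya theorem cited as a black box from \cref{section CK and GAT}. Once that machinery is in place, both the analytic approximation of $f$ and the $C^N$-approximation from \cref{GAT} are straightforward.
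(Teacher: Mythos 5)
Your overall strategy (pick an analytic normal datum in $\mathcal N$, solve the curl Cauchy--Kovalevskaya problem on a neighborhood of $\bar\Gamma$, then push to a global Beltrami field via the Global Approximation Theorem \ref{GAT}) is exactly the paper's strategy, and the first and last steps are handled correctly.

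The one part that would not survive scrutiny is your description of the middle step. You prescribe, in each local chart $(x,y,z)$ flattening $\Gamma$, the tangent Cauchy data $X_1|_{z=0}=0$, $X_2|_{z=0}(x,y)=\int_0^x\tilde f(s,y)\,ds$, solve locally, and then try to glue the local analytic solutions by ``compactness plus unique continuation.'' This cannot work: the tangent part of the Cauchy datum is part of what determines the local Beltrami field (the constraint \eqref{CK} fixes $d(W_T^\flat)$, not $W_T^\flat$), and the formula $\int_0^x\tilde f(s,y)\,ds$ is chart-dependent, so on an overlap the two local solutions will have different Cauchy data and hence will genuinely differ; analytic unique continuation only identifies solutions that \emph{already} agree on an open set, it does not reconcile ones that do not. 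In fact the gluing problem is precisely what \cref{cor:CK} is designed to resolve, and it does so not by patching local solutions but by choosing a \emph{single global} tangent datum: using the global coordinates $(z,\theta)\in\I\times\R/\Z$ on $\bar\Gamma$ with $\sigma=dz\wedge d\theta$, set $\tilde g(z,\theta)=\int_{-1}^z g(s,\theta)\,ds$ and $W_T^\flat=\tilde g\,d\theta$, so that $d W_T^\flat=g\,\sigma$ holds globally on $\bar\Gamma$; then a single application of the Cauchy--Kovalevskaya machinery (Theorem \ref{CK:teo}) produces the Beltrami field on a full neighborhood of $\bar\Gamma$. This global solvability of $d\alpha=g\sigma$ on $\bar\Gamma$ relies on the cylinder having $H^2(\bar\Gamma;\R)=0$, i.e.\ on the hypothesis that $\bar\Gamma\cong\R/\Z\times\I$. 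So you should simply invoke \cref{cor:CK} rather than sketching the patching argument as an alternative; with that fix, the rest of your proof is correct and coincides with the paper's.
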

\cref{main thm} is proved below using  \cref{GAT} and  the following:
\begin{theorem} [Cauchy-Kovalevskaya's Theorem for curl] \label{cor:CK}
Let $\Gamma$ be an analytic surface in $\R^3$  whose closure $\bar \Gamma$ is diffeomorphic to $\R/\Z\times \I$.
For every $g\in C^\omega(\bar \Gamma,\R)$ there exists a neighborhood $\Omega$ of $\bar \Gamma$ in $\R^3$ and a Beltrami field $X\in \mathscr B(\Omega)$ such that the normal component of $X|_\Gamma$ on $\Gamma$ is equal to $g$.
\end{theorem}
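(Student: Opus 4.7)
}

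The plan is to apply the classical Cauchy--Kovalevskaya theorem to a suitably \emph{reduced} form of the Beltrami system $\curl X = X$ together with the compatibility equation $\div X = 0$ (which is an automatic consequence of $\curl X = X$, obtained by taking the divergence of both sides). Since $\bar\Gamma\cong\R/\Z\times\I$ is an orientable analytic surface, its Euclidean normal bundle is trivial and admits a global analytic unit normal $\nu$, so one can build an analytic tubular neighborhood $\Omega_0$ of $\bar\Gamma$ parametrized by $(s,t,r)\in \R/\Z\times \I\times(-\epsilon,\epsilon)$ with $\Gamma=\{r=0\}$ and with an analytic orthonormal frame $(e_1,e_2,e_3)$ along $\Gamma$ satisfying $e_3=\nu$. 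Write $X=X_1 e_1+X_2 e_2+X_3 e_3$. In this frame, the first two components of $\curl X=X$ can be solved for $\partial_r X_1,\partial_r X_2$, and the divergence-free condition can be solved for $\partial_r X_3$. This yields an analytic first-order system in Cauchy--Kovalevskaya normal form,
\begin{equation*}
\partial_r X_1 = X_2+\partial_s X_3 + R_1,\qquad
\partial_r X_2 = -X_1+\partial_t X_3 + R_2,\qquad
\partial_r X_3 = -\partial_s X_1-\partial_t X_2 + R_3,
\end{equation*}
where the remainders $R_i$ are analytic zeroth-order corrections accounting for the second fundamental form of $\Gamma$. The third curl equation $(\curl X)_\nu=X_\nu$ is not included in the system and will be treated as a constraint to be propagated.

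Next, I choose analytic Cauchy data $(X_1^0,X_2^0,X_3^0)$ on $\Gamma$ with $X_3^0=g$ and such that the omitted equation holds on $\Gamma$, i.e.\ $\partial_s X_2^0-\partial_t X_1^0=g$. A concrete choice is $X_2^0\equiv 0$ and $X_1^0(s,t):=-\int_0^t g(s,\tau)\,d\tau$; both are analytic on $\bar\Gamma$ (analyticity of $g$ extends to a complex neighborhood of $\bar\Gamma$, and the integral inherits analyticity), $s$-periodic because $g$ is, and they satisfy the required constraint by construction. Applying the classical Cauchy--Kovalevskaya theorem to the normal-form system with this analytic data yields an analytic vector field $X$ on a neighborhood $\Omega\subset\Omega_0$ of $\bar\Gamma$ that solves both tangential components of $\curl X=X$ and the divergence-free condition, with $X\cdot\nu|_\Gamma=g$.

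The remaining task, which is the main subtlety, is to verify that the omitted third curl equation is also satisfied throughout $\Omega$, so that $X$ is genuinely a Beltrami field. Let $E:=(\curl X)\cdot\nu-X\cdot\nu$ be the defect function. By the choice of Cauchy data $E|_{r=0}=0$. A direct computation, using the equations for $\partial_r X_1,\partial_r X_2,\partial_r X_3$ provided by the normal-form system together with Schwarz's theorem on mixed partials, shows that $\partial_r E\equiv 0$ in $\Omega$: the $\partial_s\partial_t X_3$ terms cancel, and the terms involving $X_1,X_2$ cancel via the structure of the system. Hence $E$ is independent of $r$, and since it vanishes on $\Gamma$, it vanishes identically. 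This proves $\curl X=X$ in $\Omega$ with normal component $g$ on $\Gamma$, and after scaling $\epsilon$ we conclude $X\in\mathscr B(\Omega)$.

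The hard part is the propagation argument in the last step: it is what distinguishes this problem from a naive application of Cauchy--Kovalevskaya (which fails directly because the principal symbol of $\curl$ has rank $2$ at every nonzero covector, so $\Gamma$ is strictly characteristic for $\curl-\mathrm{Id}$ as a $3\times 3$ system). The trick of trading the characteristic third curl equation for the (automatically implied) divergence-free equation is precisely the reduction that turns the characteristic Beltrami system into a non-characteristic Cauchy problem, and the compatibility computation $\partial_r E\equiv 0$ is the structural reason this reduction actually delivers a solution of the original equation.
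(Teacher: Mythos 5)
Your proposal is correct and reaches the same destination, but it takes a genuinely different route from the paper's. The paper derives \cref{cor:CK} from a separate \cref{CK:teo}, whose \emph{general} proof works in the calculus of differential forms: it augments the $1$-form $\beta=X^\flat$ to a mixed-degree form $\psi$, replaces $\curl-\mathrm{Id}$ by the genuinely elliptic (hence non-characteristic) operator $d+\delta$, applies the classical Cauchy--Kovalevskaya theorem to $(d+\delta)\psi=\star\psi$, and then shows $\delta\psi=0$ by observing that $\delta\psi$ itself solves an elliptic Cauchy problem with zero data. Your proposal is essentially the paper's \emph{toy-model} computation (their flat-slab argument around Eqs.\ (C.10)--(C.13), where the third curl equation is traded for $\mathrm{div}\,X=0$ and the defect $E=\partial_{x_1}X_2-\partial_{x_2}X_1-X_3$ is shown to satisfy $\partial_{x_3}E=0$), promoted directly to the curved surface by working in an adapted tubular frame. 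That is a legitimate and more elementary alternative; what the paper's $d+\delta$ formulation buys is that the propagation of the constraint becomes automatic and curvature-free, whereas your computation requires tracking second-fundamental-form corrections. One small imprecision to flag in that regard: in Fermi coordinates $(\xi_1,\xi_2,\rho)$ with metric $d\rho^2+h_{ij}d\xi^i d\xi^j$, the exact identity is not $\partial_\rho E=0$ but $\partial_\rho\bigl(|h|^{1/2}E\bigr)=0$, which follows most cleanly not from Schwarz's theorem on mixed partials (where curvature terms do appear and must be seen to cancel) but from the coordinate-free observation that $Y:=\curl X-X$ is, by construction, purely normal and divergence-free, so $\mathrm{div}\,Y=|h|^{-1/2}\partial_\rho(|h|^{1/2}Y^\rho)=0$. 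Since $|h|^{1/2}>0$ this still forces $E\equiv 0$, so your conclusion stands; I would just replace the claim ``$\partial_r E\equiv 0$'' by this divergence argument to make the curved case airtight. Your choice of Cauchy data $(X_1^0,X_2^0)=(-\int_0^t g,\,0)$ also differs from the paper's $W^\flat_T=\tilde g\,d\theta$ with $\tilde g=\int_{-1}^z g$, but both are valid primitives solving the constraint $d(W^\flat_T)=g\sigma$, so this is immaterial.
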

\cref{cor:CK} will be proved in \cref{section CK and GAT} using the  Cauchy-Kovalevskaya Theorem as in \cite[Theorem 3.1]{EncPS}.
\begin{proof}[Proof of \cref{main thm}] By density of the space of  analytic maps, there exists $g\in  C^\omega(\bar \Gamma,\R)\cap \cal N$.  Then we apply Theorem \ref{cor:CK} with $g$, which provides a local Beltrami field $X$, whose normal component on $\Gamma$ is equal to $g$, which in turn can be approximated by a Beltrami field $\tilde X\in \mathscr B(\R^3)$ on account of
\cref{GAT} (obviously, the complement of a neighborhood of $\bar \Gamma$ is connected). Observe that the normal component of $X|_{\bar \Gamma}$ is close to $g$ and so it belongs to  $\cal N$.
 \end{proof}
\cref{GAT,main thm} are the only results from PDE techniques. These perturbation tools will be used as black boxes. The remaining of the proof will be purely dynamical.

\subsection{Structure of the proof of \cref{ThmA}: existence of a quadratic, non-degenerate homoclinic  tangency unfolding in the class of Beltrami fields}
\label{sketch thm A}
We will show that there are quadratic homoclinic tangencies unfolding non-degenerately
nearby a Beltrami field that displays a double strong heteroclinic link. Let us define this configuration. Let $U\subset \R^3$ be a bounded open subset of $\R^3$. A vector field $X\in \Gamma^\infty_{\mathrm{Leb}}(U)$ displays a \emph{heteroclinic link} if there are two saddle periodic orbits $\gamma^+$ and $\gamma^-$, contained in $U$, such that an unstable branch 
 of $\gamma^-$ is equal to a stable branch of $\gamma^+$ and it is still contained in $U$.  Denote by $\Gamma$ the intersection of these branches.
  The heteroclinic link $\Gamma$  is \emph{strong}   if   $\Gamma$ is orientable\footnote{In particular, the closure $\bar \Gamma$ is diffeomorphic to $\R/\Z\times [-1,1]$.} and the strong stable foliation of $\gamma^+$ in $\Gamma$    coincides with the strong unstable foliation of $\gamma^-$ in    $ \Gamma$.  See \cref{ImageSSU}. Observe that the dynamics on a strong heteroclinic link is conjugate to a product of a rotation with a gradient on the interval.

   \begin{figure}[h]
  	\centering
  	\includegraphics[scale=0.4]{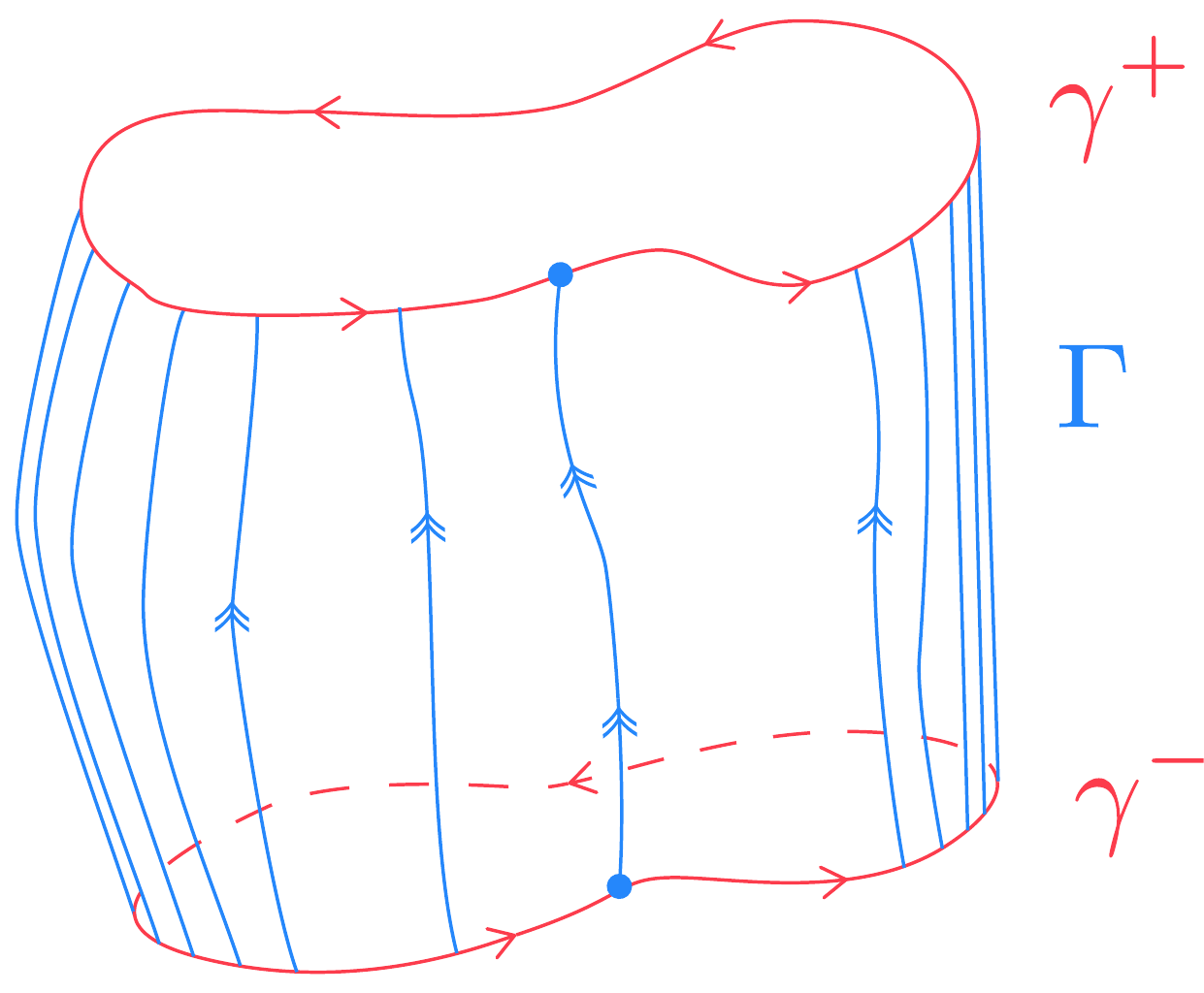}
  	\caption{A strong heteroclinic link.}
  	\label{ImageSSU}
  \end{figure}
%

  A \emph{double, strong heteroclinic link} is a pair $(\Gamma^+, \Gamma^-)$ of strong heteroclinic links between the same saddle periodic orbits $(\gamma^+,  \gamma^-)$ and so that $\Gamma^+\subset U$ and $\Gamma^-\subset U$ contain a stable branch of respectively $\gamma^+$ and $\gamma^-$. See \cref{Fig Sigma}.

Here is the only example available in the literature of a Beltrami field on an open subset of $\R^3$ displaying a \emph{double strong heteroclinic link}.
\begin{example}[
{\cite[\textsection 5 formula (5.1)]{EPSR}}]
\label{example:EPSR} The following vector field $X$ (written in cylindrical coordinates)
is  in $ \mathscr{B}(\R^3\setminus \{(0,0)\} \times \R)$  and displays a double,  strong  heteroclinic link:
	\begin{equation}
	X:= \dfrac{1}{r}\left( \partial_r\psi\, E_z -\partial_z\psi\, E_r+\dfrac{\psi}{r}\, E_\theta \right)\, ,
	\end{equation}
	where $\psi(z,r):=\cos(z)+3rJ_1(r)$ with $J_1$ being the Bessel function of the first kind and order $1$. The bounded open set $U$, with connected complement, can be taken to be any proper subset contained in the intersection of $\R^3\setminus \{(0,0)\} \times \R$ with a large enough ball.
	
		\begin{figure}[h]
		\centering
		\includegraphics[scale=0.5]{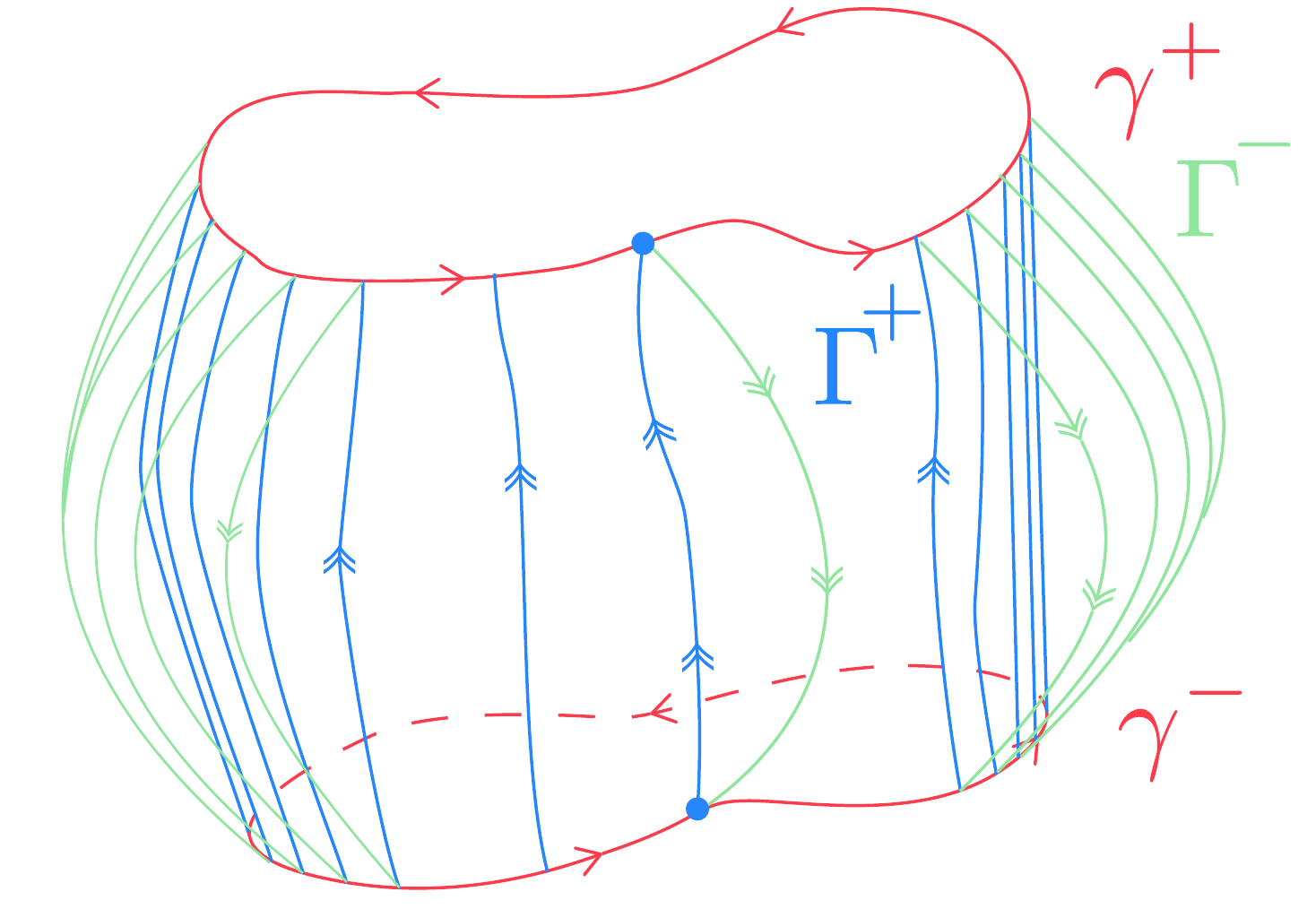}
		\caption{Periodic orbits $\gamma^\pm$ displaying a double (strong) heteroclinic link $(\Gamma^+,\Gamma^-)$.}
		\label{Fig Sigma}
	\end{figure}
\end{example}

In order to prove  \cref{ThmA} we will show that any  Beltrami field $X\in\mathscr{B}(U)$, where $U\subset \R^3$ is a bounded open subset of $\R^3$ such that $\R^3\setminus U$ is connected, displaying a double strong heteroclinic link gives rise to a smooth family $(X_p)_{p\in \I}$ of Beltrami fields in $\mathscr{B}(\R^3)$ which unfolds non-degenerately a quadratic homoclinic tangency. This will be a particular case of \cref{thm hom tg} where non-degenerate unfoldings of higher multiplicities (see \cref{hom tg higher mult}) are also constructed.

%
%
%

We present now the main tools and the key points for the proof of \cref{thm hom tg}, which states:

\begin{theorem}\label{thm hom tg}
\textit{For any  $k\geq 0$, there exists a family $(X_p)_{p\in\mathbb{I}^k}$ of Beltrami fields with $X_p\in\mathscr{B}(\R^3)$ that unfolds non-degenerately a homoclinic tangency of multiplicity $k$ at $p=0$.}
\end{theorem}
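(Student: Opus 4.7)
The starting point is the explicit Beltrami field $X_0\in\mathscr{B}(U)$ of \cref{example:EPSR}, where $U\subset\R^3$ is a bounded open set with connected complement, displaying a double strong heteroclinic link $(\Gamma^+,\Gamma^-)$ between saddle periodic orbits $\gamma^+,\gamma^-$. The strategy is to perturb $X_0$ inside $\mathscr{B}(\R^3)$ so as to break the two heteroclinic surfaces $\Gamma^\pm$ in a controlled way, and then to invoke the inclination ($\lambda$-)lemma along the heteroclinic cycle $\gamma^+\cup\Gamma^+\cup\gamma^-\cup\Gamma^-$ to transfer the resulting splittings into a homoclinic tangency of, say, $\gamma^+$ with prescribed contact order and independently controllable Taylor coefficients.

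The first step is to attach to any Beltrami perturbation $\delta X$ of $X_0$ a pair of Melnikov splitting functions $(M^+_{\delta X},M^-_{\delta X})\in C^\infty(\bar\Gamma^+,\R)\times C^\infty(\bar\Gamma^-,\R)$ that measure, to first order, how the perturbed invariant manifolds fail to coincide near $\bar\Gamma^\pm$. Since each $\bar\Gamma^\pm$ is diffeomorphic to $\R/\Z\times\I$ and the unperturbed dynamics on it is conjugate to a product of a rotation and a gradient on the interval, $M^\pm_{\delta X}$ reduces to the integral along each heteroclinic orbit of the normal component of $\delta X|_{\bar\Gamma^\pm}$. A simple tangential zero of $M^+$ yields a quadratic heteroclinic tangency between $W^u(\gamma^-)$ and $W^s(\gamma^+)$; combined with a transverse zero of $M^-$ (so that a transverse heteroclinic from $\gamma^+$ to $\gamma^-$ persists), the $\lambda$-lemma converts this into a homoclinic tangency of $\gamma^+$, and higher-order tangential zeros of $M^+$ produce tangencies of correspondingly higher contact order.

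To realize a tangency of multiplicity $k$ and unfold it non-degenerately by a $k$-parameter family, I want to prescribe the $k$-jet of $M^+_{\delta X}$ at a chosen heteroclinic point independently of $p\in\mathbb{I}^k$, while keeping $M^-_{\delta X}$ transverse. Combining \cref{main thm} with the Cauchy-Kovalevskaya extension of \cref{cor:CK}, any smooth normal-component datum on $\bar\Gamma^\pm$ can be approximated by the trace of a genuine Beltrami field in $\mathscr{B}(\R^3)$; together with the fact that the linear map from normal-component data to $k$-jets of the associated Melnikov integral at a fixed point is surjective, this provides a $k$-parameter family of Beltrami perturbations whose Melnikov jets span a transverse direction to the singular stratum of tangencies of order $<k+1$, giving the required non-degenerate $k$-parameter unfolding.

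The main obstacle, and the technical heart of the argument, is the lack of localization of Beltrami perturbations: a $\delta X\in\mathscr{B}(\R^3)$ with prescribed normal component on $\bar\Gamma^+$ necessarily has a nontrivial trace on $\bar\Gamma^-$, so $M^+$ and $M^-$ cannot be designed independently by any local procedure. The resolution is the linear algebra trick of \cref{sec:LinALg} applied to the joint Melnikov operator
\[
L\colon\ \mathscr{B}(\R^3)\ \longrightarrow\ J^k_{z_+}C^\infty(\bar\Gamma^+,\R)\ \oplus\ J^1_{z_-}C^\infty(\bar\Gamma^-,\R),\qquad \delta X\ \longmapsto\ \bigl(j^k_{z_+}M^+_{\delta X},\,j^1_{z_-}M^-_{\delta X}\bigr),
\]
where $z_\pm$ are the chosen heteroclinic points. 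I construct a right inverse of $L$ by inverting explicitly on a finite-dimensional subspace of analytic Beltrami fields whose Melnikov jets form a basis of the target; the trick amounts to showing that the coupling matrix between the $\Gamma^+$- and $\Gamma^-$-contributions is invertible, so that the interference term can be subtracted off. Once this right inverse is in hand, prescribing its image along a smooth $k$-parameter family in the target yields the family $(X_p)_{p\in\mathbb{I}^k}$ of \cref{thm hom tg}, unfolding non-degenerately a homoclinic tangency of multiplicity $k$ inside $\mathscr{B}(\R^3)$.
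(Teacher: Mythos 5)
Your overall architecture matches the paper's: start from the explicit double strong heteroclinic link of \cref{example:EPSR}, break $\Gamma^+$ and $\Gamma^-$ via Melnikov splitting functions, pass through a $k$-jet control on one link and a $1$-jet (transversality) control on the other, convert heteroclinic to homoclinic via the (para-)inclination lemma, and finish with the Global Approximation Theorem to land in $\mathscr{B}(\R^3)$. Up to the inessential relabeling of $\Gamma^\pm$, this is the paper's route.

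However, your description of the ``technical heart'' — the linear algebra trick of \cref{sec:LinALg} — is not what the paper does, and as stated it would stall exactly where the trick is needed. You claim the trick ``amounts to showing that the coupling matrix between the $\Gamma^+$- and $\Gamma^-$-contributions is invertible, so that the interference term can be subtracted off.'' That is a direct invertibility statement for a \emph{specific} finite matrix, and there is no reason it should hold for an arbitrarily chosen pair of base points $(z_+,z_-)$; establishing it would generically require an explicit estimate or a computer-assisted computation, which the paper explicitly sets out to avoid. The actual argument is non-constructive and uses a pigeonhole-type general position statement: one picks \emph{three} distinct base points $\theta_1,\theta_2,\theta_3$ on $\R/\Z$ for the $k$-jet of $\cal M_-$. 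Using \cref{thm:Meln:final} one shows that the kernels $\ker J^k_{\theta_i}\cal M_-|_{\mathscr B(U)}$ are each $(k+1)$-codimensional and jointly in general position (their codimensions add). If, for contradiction, the $2$-codimensional $\ker J^1_\alpha\cal M_+$ failed to be in general position with each of the three kernels, then each sum $\ker J_+ + \ker J_i$ would be contained in a codimension-$1$ hyperplane $F_i$; since $F_i\supset\ker J_i$, the $F_i$ are still in general position, so $\mathrm{codim}(F_1\cap F_2\cap F_3)=3$, contradicting $F_1\cap F_2\cap F_3\supset\ker J_+$ which has codimension $2$. Hence for some $i$ the product $J^1_\alpha\cal M_+\times J^k_{\theta_i}\cal M_-$ is already onto, and \cref{prop:transverse+k-het-tg} follows with no explicit matrix computation. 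Without this three-base-point pigeonhole argument your plan has no mechanism to guarantee surjectivity of the joint jet map, and this is precisely the gap that makes the theorem nontrivial.
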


Its proof is divided in four steps.

\subsubsection{Step 1: The inverse of the Melnikov operator}
Let $U\subset \R^3$ be a bounded open subset of $\R^3$ and let $X\in\Gamma^\infty_\leb(U)$ display a strong heteroclinic link $\Gamma\subset U$ between periodic saddle orbits $\gamma^+$ and $\gamma^-$.
We recall that the closure $\bar L$ of the half strong stable manifold of a point in  $\gamma^+$ in $\Gamma$ 
 is diffeomorphic to a segment. Thus there is a disk  $\Sigma\subset U$ 
  which contains $\bar L$  and   intersects transversally   $\Gamma$.
Up to reducing $\Sigma$, assume that $\Sigma$ is transverse to the vector field $X$.

 Observe that $\Sigma\cap\Gamma$ is a heteroclinic link for the Poincar\'e map $f$ at $\Sigma$. A small perturbation $W\in \Gamma^\infty_\leb(\R^3)$ of the vector field $X$ induces a small perturbation  $f_W$ of the Poincar\'e return map on $\Sigma$. In order to study the unfolding of the heteroclinic link $\Gamma$, we shall analyze the distance between the stable and unstable manifolds of the perturbed Poincar\'e return map at $\Sigma$ using the \emph{Melnikov operator}.

Roughly speaking (see  \cref{section Melnikov} for rigorous details), there is a chart of a neighborhood of $L$ in $\Sigma$ which sends $L$ to $\R\times \{0\}$ and so that seen in this chart the dynamics is the translation by $(1,0)$. Then for a perturbation of $f$, the link breaks as two curves which approach (on some fundamental domain) the graphs of $\Z$-periodic functions as the perturbation goes to zero. 
The asymptotic  of the difference between these two functions is the \emph{Melnikov operator} $\cal M: W \in \Gamma^\infty_\leb(\R^3) \mapsto   \cal M(W)\in C^\infty (\R/\Z, \R)$.

Actually, $\cal M(W)$ depends only on $W|_\Gamma$: thus, passing to the quotient, we obtain an operator  $\cal M:  C^\infty(\Gamma,\R^3) \mapsto    C^\infty (\R/\Z, \R)$ still denoted by $\cal M$.

We will show in \cref{subsec:Meln:inverse}  that the Melnikov operator has a right  inverse:
\begin{proposition}\label{prop:Meln:smooth}
There exists a linear and continuous operator
	\[
	\cal I :g\in C^\infty(\R/\Z,\R)\to \cal I(g)\in C^\infty(\Gamma,\R^3)
	\]
	such that $\cal M(\cal I(g))=g$ and $\cal I(g)$ is compactly supported.
\end{proposition}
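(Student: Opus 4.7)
The plan is to produce an explicit right inverse by first writing $\cal M$ as a weighted integration along flow lines on $\Gamma$, and then inverting this integration by a bump-function construction.

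\medskip
\textbf{Setting up coordinates.} Since the closure $\bar\Gamma$ is diffeomorphic to $\R/\Z\times\I$ and the flow on $\Gamma$ is (as recalled in the excerpt) conjugate to the product of a rotation with a gradient on an interval, I would fix a smooth parametrization $\phi:\R/\Z\times\R\to\Gamma$ such that each orbit of $X|_\Gamma$ is of the form $t\mapsto\phi(\theta,t)$ with $\phi(\theta,t)\to\gamma^\pm$ as $t\to\pm\infty$, and such that $\theta$ is the parameter identified with $\R/\Z$ in the Melnikov statement. Because $\Gamma$ is orientable (built into the definition of a strong heteroclinic link), it carries a smooth unit normal vector field $\nu(\theta,t)\in\R^3$ along $\Gamma$.

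\medskip
\textbf{Reducing $\cal M$ to a weighted integral.} The standard Melnikov derivation for a volume-preserving perturbation (to be carried out rigorously in the referenced section on $\cal M$) expresses the leading-order splitting between the perturbed unstable manifold of $\gamma^-$ and stable manifold of $\gamma^+$, as read on the transverse disk $\Sigma$, as
\[
\cal M(W)(\theta)\;=\;\int_{-\infty}^{+\infty} w(\theta,t)\,\bigl\langle W(\phi(\theta,t)),\,\nu(\theta,t)\bigr\rangle\,dt,
\]
where $w:\R/\Z\times\R\to\R_{>0}$ is a strictly positive smooth weight coming from the linearized transverse-to-$\Gamma$ cocycle of $X$ (essentially an inverse Jacobian of that cocycle, whose positivity reflects the $\Z$-equivariance of the chart and the orientability of $\Gamma$). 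By the hyperbolicity of $\gamma^\pm$, $w$ decays exponentially as $t\to\pm\infty$, so the integral converges. In particular $\cal M(W)$ depends only on the normal component $\langle W,\nu\rangle$ of $W|_\Gamma$.

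\medskip
\textbf{The right inverse.} Pick once and for all a bump $\chi\in C^\infty_c(\R,\R)$ with $\int_\R\chi(t)\,dt=1$, and let $K:=\supp\chi$. For any $g\in C^\infty(\R/\Z,\R)$, define
\[
\cal I(g)\bigl(\phi(\theta,t)\bigr)\;:=\;\frac{g(\theta)\,\chi(t)}{w(\theta,t)}\;\nu(\theta,t),
\]
extended by $0$ outside $\phi(\R/\Z\times K)$. Since $w>0$ on the compact set $\R/\Z\times K$, the factor $\chi(t)/w(\theta,t)$ is smooth and compactly supported, and hence $\cal I(g)\in C^\infty(\Gamma,\R^3)$ is compactly supported in $\Gamma$. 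Linearity of $\cal I$ in $g$ is immediate, and continuity in the Fr\'echet topology on $C^\infty$ follows since $\cal I$ is multiplication of $g$ by the \emph{fixed} smooth, compactly supported, vector-valued factor $\chi(t)\,\nu(\theta,t)/w(\theta,t)$. Finally, the direct computation
\[
\cal M(\cal I(g))(\theta)\;=\;\int_\R w(\theta,t)\cdot\frac{g(\theta)\chi(t)}{w(\theta,t)}\,dt\;=\;g(\theta)\int_\R\chi(t)\,dt\;=\;g(\theta)
\]
shows $\cal M\circ\cal I=\id$.

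\medskip
\textbf{Main obstacle.} Everything reduces to establishing the integral formula for $\cal M$ with a strictly positive, integrable weight $w$; once that is in place, the right inverse is essentially forced by dividing by $w$ against a unit-mass bump. The positivity and smoothness of $w$ are where the hyperbolicity of $\gamma^\pm$, the volume-preserving nature of $X$, and the \emph{strong} (orientable, matched strong foliations) structure of $\Gamma$ enter the argument in an essential way, since they guarantee that the transverse cocycle is everywhere nondegenerate and of constant sign.
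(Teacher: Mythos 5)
Your construction — a unit-mass bump function along orbits, multiplied by $g$ of the orbit label, times a vector field transverse to $\Gamma$ — is the same idea the paper uses. The paper sets $\cal I(g)=(g\circ\varphi)\cdot\Psi\cdot N$, where $\varphi$ is the orbit-labelling map (your~$\theta$), $\Psi$ is a compactly supported bump with $\int_\R\Psi\circ\Phi^t_X(s,0)\,dt\equiv 1$ (your $\chi$), and $N$ is a transverse field normalized on $L$ by $\Omega(\partial_t\ell,N_\Sigma\circ\ell)=1$ and extended via a flow-box chart (playing the role of your $\nu/w$). The packaging differs slightly: you divide by a pointwise weight $w(\theta,t)$, whereas the paper absorbs the weight into the choice of $N$ and then needs the orbit-integral of $\Psi_0$ to be \emph{constant} — a fact it derives from the absence of nontrivial continuous first integrals for $\Phi^t_X|_\Gamma$, i.e.\ from the strong-heteroclinic structure. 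Your pointwise division sidesteps that constancy argument, which is a mild simplification.

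However, the step you label ``main obstacle'' is exactly where the content of the proposition lives, and it is not a quotable black box. The paper's Melnikov function is \emph{defined} as the discrete sum $\sum_{k\in\Z}\Omega\bigl(\partial_t\ell(t+k),\,\partial_r\epsilon(r g)|_{r=0}\circ\ell(t+k-1)\bigr)$ over iterates of the Poincar\'e map, and the conversion of each summand into a time-$1$ integral of the vector-field perturbation, with the asserted positive smooth weight, is precisely the flow-box computation of Lemma~\ref{lemma Omega}. That computation uses the specific normalization of $N$, the vanishing of the third component of $X$ in the flow-box chart, and the $\Phi^t_X$-invariance of $\varphi$ to get the factor $g(s)\int_0^1\Psi\circ\Phi^t_X\,dt$ per iterate, then sums over $k$. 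Without deriving this (or your equivalent integral formula with the weight $w$), the proof is incomplete: you have shown that \emph{if} $\cal M$ has the claimed integral representation \emph{then} $\cal I$ is a right inverse, but you have not established the representation, and in this nonstandard setting (Poincar\'e map of a flow, vector-field perturbations inducing map perturbations, symplectic pairing on a chosen section) the positivity and smoothness of $w$ require the argument the paper actually carries out.
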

To prove this proposition, we will identify $\Gamma$ with the annulus $\R/\Z\times \R$, we will consider a bump function $\psi$ defined on $\Gamma$ such  that the integral of $\psi$ along each $X$-orbit in $\Gamma$  equals $1$.  Then  for $g\in C^\infty(\R/\Z,\R)$, with $\pi: \R\to \R/\Z$ 
and $p_2: \Gamma\equiv   \R/\Z\times  \R\to \R $ the canonical projections, we will prove that
$\cal I(g):= \psi\cdot g\circ \pi \circ p_2$  satisfies the sought properties. The idea is inspired by \cite{BerTur}.




\subsubsection{Step 2: Transforming a normal datum on a cylinder into a Beltrami vector field}
We would like to  complete (an approximation of) the normal vector field obtained in the first step into a Beltrami field on $\R^3$.  To this end we use \cref{main thm}  to obtain  the following:

\begin{theorem}\label{thm:Meln:final}
Let $U\subset \R^3$ be an open subset of $\R^3$ and let $X\in\mathscr{B}(U)$ display a strong heteroclinic link $\Gamma\subset U$. Let $\Sigma$ be a Poincar\'e section endowed with coordinates, chosen as for \cref{prop:Meln:smooth}. This defines the Melnikov operator $\cal M$. Then for every  nonempty open set $\cal U \in C^\infty(\R/\Z,\R)$  there exists a Beltrami field $W\in\mathscr{B}(U)$ whose Melnikov function $\cal M(W)$ belongs to $\cal U$. \end{theorem}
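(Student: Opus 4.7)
The plan is to chain the two tools developed in this subsection: the right inverse $\mathcal{I}$ of the Melnikov operator given by \cref{prop:Meln:smooth}, and the normal-datum realization on a cylindrical surface given by \cref{main thm}. The bridge between them is the standard fact that $\mathcal{M}$ depends only on the component of the perturbation normal to $\Gamma$, since tangential perturbations do not displace an invariant manifold; hence $\mathcal{M}$ factors through a scalar operator $\tilde{\mathcal{M}}\colon C^\infty(\bar\Gamma,\R)\to C^\infty(\R/\Z,\R)$ via the normal projection.

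Concretely, I would proceed as follows. Given a nonempty open set $\mathcal{U}\subset C^\infty(\R/\Z,\R)$, pick $g\in\mathcal{U}$; by density of analytic functions and openness of $\mathcal{U}$ we may assume $g$ is analytic. Apply \cref{prop:Meln:smooth} to obtain a compactly supported $V=\mathcal{I}(g)\in C^\infty(\Gamma,\R^3)$ with $\mathcal{M}(V)=g$. Fix a smooth unit normal field $\nu$ along $\Gamma$, which is available because the strong heteroclinic link $\Gamma$ is orientable by definition, and set $\phi:=\langle V,\nu\rangle\in C^\infty(\bar\Gamma,\R)$, a normal datum compactly supported in $\Gamma$. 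By continuity of $\tilde{\mathcal{M}}$ in the $C^\infty$ topology, there exists an open neighborhood $\mathcal{N}$ of $\phi$ in $C^\infty(\bar\Gamma,\R)$ whose image under $\tilde{\mathcal{M}}$ is contained in $\mathcal{U}$.

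Next, I would verify the hypotheses of \cref{main thm}. The surface $\Gamma$ is analytic, since $X$ is analytic by elliptic regularity applied to $\mathrm{curl}\,X=X$ and invariant manifolds of analytic vector fields are analytic; moreover $\bar\Gamma$ is diffeomorphic to $\R/\Z\times\mathbb{I}$ by the very definition of a strong heteroclinic link (cf.\ the footnote in \textsection\ref{sketch thm A}). Hence \cref{main thm} yields a Beltrami field $\tilde W\in\mathscr{B}(\R^3)$ whose normal component on $\Gamma$ lies in $\mathcal{N}$. Setting $W:=\tilde W|_U\in\mathscr{B}(U)$ (which is legitimate since $\mathscr{B}(\R^3)$ restricts into $\mathscr{B}(U)$) and noting that the Melnikov function is computed locally along $\Gamma\subset U$, we conclude $\mathcal{M}(W)=\tilde{\mathcal{M}}(\langle\tilde W,\nu\rangle|_\Gamma)\in\mathcal{U}$.

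The only delicate point I anticipate is checking the factorization $\mathcal{M}=\tilde{\mathcal{M}}\circ(\text{normal projection})$ and the continuity of $\tilde{\mathcal{M}}$ in the $C^\infty$ topology; both should be read off directly from the integral formula for $\mathcal{M}$ that is set up in \cref{section Melnikov}. Everything else is just the composition of the two already-stated key results, which carry all the analytic weight: \cref{prop:Meln:smooth} produces a preimage on $\Gamma$, and \cref{main thm} lifts it (up to $C^\infty$-approximation, which suffices since $\mathcal{U}$ is open) to an honest Beltrami field on $\R^3$.
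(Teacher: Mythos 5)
Your proof is correct and takes essentially the same approach as the paper: extract a preimage $\mathcal{I}(g)$ via \cref{prop:Meln:smooth}, realize a nearby normal datum by a global Beltrami field via \cref{main thm}, and conclude by continuity of $\mathcal{M}$. You make explicit the factorization of $\mathcal{M}$ through the normal projection, a point the paper leaves implicit (its claim that $W|_\Gamma$ is close to $\mathcal{I}(g)$ really only concerns the normal component, which is all that \cref{main thm} controls); that factorization is correct, as can be read off from the flow-box computation in Lemma~\ref{lemma Omega}.
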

\begin{proof}
Let $g\in \cal U$.  By \cref{prop:Meln:smooth}, there is $\cal I(g)\in C^\infty(\Gamma,\R^3)$ such that $\cal M(\cal I(g))=g$.
  Applying \cref{main thm} to $\cal I(g)$,  we obtain a Beltrami field $W$ whose restriction  $W|_\Gamma$ is arbitrarily close to $\cal I(g)$. By continuity of $\cal M$, we obtain that
  $\cal M(W)\equiv \cal M(W|_\Gamma)$ is close to $g$ and so in $\cal U$.
\end{proof}

\subsubsection{Step 3: Prescribing jets of Melnikov functions on a double strong heteroclinic link}\label{step 3 preuve}
Now we start with a Beltrami field $X\in\mathscr{B}(U)$, for some bounded open set $U\subset\R^3$, displaying a \emph{double} strong heteroclinic link $(\Gamma^+,\Gamma^-)$, with $\Gamma^+,\Gamma^-\subset U$. 
Let $\cal M_+$ and $\cal M_-$ be the Melnikov operators, associated, respectively, to $\Gamma^+$ and $\Gamma^-$.

Our aim is to obtain a  Beltrami field unfolding on $\mathbb R^3$ which breaks $\Gamma^+$ to a transverse heteroclinic intersection and breaks  $\Gamma^-$  to a
 non-degenerate  unfolding of a quadratic  heteroclinic tangency.

 To do so, we will control the jets of both Melnikov functions $\cal M_+$ and $\cal M_-$. \cref{thm:Meln:final} can give, separately, when applied to $\Gamma^+$, a Beltrami field $X_+$ whose Melnikov function $\cal M_+(X_+)$ corresponds to a transverse heteroclinic intersection, and, when applied to $\Gamma^-$, a Beltrami field $X_-$ whose Melnikov function $\cal M_-(X_-)$ corresponds to a (quadratic) heteroclinic tangency. The difficulty is that we cannot just consider the Beltrami field $X_++X_-$, because we do not have any control on how $X_+$ and $X_-$ act, respectively, on $\Gamma^-$ and $\Gamma^+$.
 Indeed the non-disjointedness of  $\Gamma^+$ and $\Gamma^-$  forbids the application of \cref{GATcoro}.

We overcome this difficulty by obtaining  a \emph{non-degenerate unfolding of any multiplicity} of this double strong heteroclinic link.
In order to state this precisely, for  $g\in C^\infty(\R/\Z,\R)$, $\alpha\in\R/\Z$ and  $k\geq 0$, we identify the $k$-jet of the function $g$ at $\alpha$ with a vector in $\R^{k+1}$:
	\[J^k_{\alpha}(g):=(a_0,a_1,\dots,a_k) \in \R^{k+1} \quad \text{with}\quad a_i= D^i_\alpha g \; , \quad \forall 0\le i\le k\; . \]
	
\begin{proposition}\label{prop:transverse+k-het-tg}
	Fix $k\geq 0$ and let $\alpha \in\R/\Z$. There exist $\beta\in\R/\Z$ and  a $(k+3)$-dimensional space $\cal W$ of 
	Beltrami vector fields defined on $U$ such that the following map	is an isomorphism:
	\begin{equation} \label{iso}
	 W\in \cal W\mapsto \left(J^1_{\alpha }(\cal M_+(W)), J^k_{\beta}(\cal M_-(W))\right)\in \R^{k+3}\; .
	\end{equation}

\end{proposition}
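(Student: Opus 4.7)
The plan is to construct $\mathcal W$ as the span of $k+3$ Beltrami fields: two of them designed to control the $1$-jet of $\mathcal M_+$ at $\alpha$ and $k+1$ of them designed to control the $k$-jet of $\mathcal M_-$ at $\beta$. The main difficulty will be the \emph{crosstalk}: a Beltrami field producing a prescribed Melnikov function on $\Gamma^+$ necessarily produces an uncontrolled Melnikov contribution on $\Gamma^-$, and vice versa. As the authors emphasise, the non-disjointness of $\bar\Gamma^+$ and $\bar\Gamma^-$ forbids a direct application of \cref{GATcoro} to decouple these contributions, so we will handle them by a Schur complement trick combined with a judicious choice of $\beta$.

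First I will apply \cref{thm:Meln:final} to $\Gamma^+$ to obtain Beltrami fields $W_0^+,W_1^+\in\mathscr B(U)$ such that the vectors $v_i^+:=J^1_\alpha(\mathcal M_+(W_i^+))$ form a basis of $\R^2$; equivalently, the $2\times 2$ matrix $A$ with columns $v_i^+$ is invertible. This determines the crosstalk functions $h_i:=\mathcal M_-(W_i^+)\in C^\infty(\R/\Z,\R)$. Next, for a parameter $\beta\in\R/\Z$ to be fixed at the end, I will apply \cref{thm:Meln:final} to $\Gamma^-$ with target Melnikov function close to a basis of a fixed $(k+1)$-dimensional subspace $E_-\subset C^\infty(\R/\Z,\R)$ chosen so that $J^k_\beta\colon E_-\to\R^{k+1}$ is an isomorphism (for instance, $E_-$ spanned by bump functions of the form $\psi(t)(t-\beta)^j/j!$). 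This yields Beltrami fields $W_0^-,\ldots,W_k^-$ such that the $(k+1)\times(k+1)$ matrix $D(\beta)$ with columns $J^k_\beta(\mathcal M_-(W_j^-))$ is invertible; the induced crosstalk on $\Gamma^+$ is $k_j:=\mathcal M_+(W_j^-)$. In the basis $(W_0^+,W_1^+,W_0^-,\ldots,W_k^-)$ the matrix of $\Phi$ has the $2\times 2$ block form $M(\beta)=\bigl(\begin{smallmatrix}A&B\\ C(\beta)&D(\beta)\end{smallmatrix}\bigr)$ with $B_{\cdot,j}:=J^1_\alpha(k_j)$ and $C(\beta)_{\cdot,i}:=J^k_\beta(h_i)$.

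The linear algebra trick is to kill the lower-left block by a change of basis: replacing $W_i^+$ by $\tilde W_i^+:=W_i^+-\sum_{j}\bigl(D(\beta)^{-1}J^k_\beta(h_i)\bigr)_j W_j^-$ makes $J^k_\beta(\mathcal M_-(\tilde W_i^+))=0$, so in the basis $(\tilde W_0^+,\tilde W_1^+,W_0^-,\ldots,W_k^-)$ the matrix of $\Phi$ becomes block-triangular, with diagonal blocks the Schur complement $S(\beta):=A-B\,D(\beta)^{-1}C(\beta)\in M_2(\R)$ and the invertible $D(\beta)$. Hence $\Phi|_\mathcal W$ is an isomorphism onto $\R^{k+3}$ if and only if $S(\beta)\in GL_2(\R)$.

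The hard part is to choose $(\beta,W_i^+,W_j^-)$ so that the $2\times 2$ Schur complement $S(\beta)$ is invertible, which is precisely the linear algebra trick applied to the inverse of the Melnikov operator. Since Beltrami fields are real analytic (the Beltrami equation being elliptic with analytic coefficients), the functions $h_i,k_j$ are real analytic and the map $\beta\mapsto\det S(\beta)$ is real analytic on $\R/\Z$; thus it either vanishes identically or is non-zero outside a finite set. To rule out identical vanishing, one exploits the freedom provided by \cref{thm:Meln:final}: perturbing the $W_i^+$'s (resp.~$W_j^-$'s) within the open set of admissible Beltrami fields produces arbitrarily small $C^\infty$-perturbations of the crosstalk functions $h_i$ (resp.~$k_j$) on $\R/\Z$, and hence arbitrary small perturbations of $C(\beta)$ and $B$. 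A generic such perturbation breaks any hypothetical identity $\det S\equiv 0$, yielding a choice for which $S(\beta_0)\in GL_2(\R)$ at some $\beta_0\in\R/\Z$. Setting $\beta:=\beta_0$ and $\mathcal W:=\mathrm{span}(\tilde W_0^+,\tilde W_1^+,W_0^-,\ldots,W_k^-)$ then produces the required $(k+3)$-dimensional space on which the map \eqref{iso} is an isomorphism.
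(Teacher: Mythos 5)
Your plan sets up the right linear algebra — the Schur complement reduction correctly identifies that the invertibility of \eqref{iso} on your $\mathcal W$ is equivalent to $S(\beta)=A-BD(\beta)^{-1}C(\beta)\in GL_2(\R)$ — but the final step has a genuine gap that the paper's argument is precisely designed to avoid. You assert that perturbing the $W_i^+$'s "produces arbitrarily small $C^\infty$-perturbations of the crosstalk functions $h_i=\cal M_-(W_i^+)$" which are rich enough to break a hypothetical identity $\det S\equiv 0$. But \cref{thm:Meln:final} only tells you that you can prescribe $\cal M_+(W)$ (approximately) while saying nothing about the induced $\cal M_-(W)$; the joint behaviour of the two Melnikov operators on $\mathscr B(U)$ is exactly the unknown here. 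There is no reason why, within the constrained family of Beltrami fields keeping $J^1_\alpha\cal M_+$ fixed, the crosstalk $\cal M_-$ can be perturbed in a direction that makes $\det S$ non-vanish — this assertion is circular, since it amounts to assuming a form of surjectivity for the joint operator, which is what the proposition asks you to prove. A secondary concern: you invoke real analyticity of $\beta\mapsto\det S(\beta)$, but the Melnikov function is an infinite sum over iterates of the return map (cf.\ the definition in Appendix A), and the paper only establishes it is in $C^\infty(\R/\Z,\R)$; real analyticity of $\cal M_\pm(W)$ would require a separate argument about uniform exponential convergence in a complex strip, which is not provided.

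The paper sidesteps this entirely by an elegant dimension count with \emph{three} base points on $\Gamma^-$. Using \cref{isoR}, one obtains that $J_+ := J^1_\alpha\cal M_+|_{\mathscr B(U)}$ is onto $\R^2$ (so $\ker J_+$ has codimension $2$) and that $J_-:=(J^k_{\theta_1}\cal M_-,J^k_{\theta_2}\cal M_-,J^k_{\theta_3}\cal M_-)|_{\mathscr B(U)}$ is onto $\R^{3(k+1)}$ for any three distinct $\theta_i\in\R/\Z$ (so $\ker J_1,\ker J_2,\ker J_3$ are in general position). If $J_+\times J^k_{\theta_i}\cal M_-$ failed to be onto $\R^{k+3}$ for every $i$, then each $\ker J_+ + \ker J_i$ would be contained in a codimension-$1$ hyperplane $F_i\supset\ker J_i$; since the $\ker J_i$ are in general position, so are the $F_i$, giving $\mathrm{codim}(F_1\cap F_2\cap F_3)=3$, contradicting $F_1\cap F_2\cap F_3\supset\ker J_+$ which has codimension $2$. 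Hence at least one $\theta_i$ works, and one takes $\beta:=\theta_i$ and $\cal W$ any $(k+3)$-dimensional complement of the kernel of $J_+\times J^k_{\theta_i}\cal M_-$. This avoids any appeal to analyticity or to unjustified perturbation genericity, and replaces them by the pigeonhole-type trick of testing three candidate points simultaneously.
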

The proof of this proposition will be presented in \cref{sec:LinALg}. It uses \cref{thm:Meln:final} and a new trick which allows us to avoid a computer-assisted estimate as in {\color{red} \cite{EPSR}}.
 The trick (see \cref{proof of prop:transverse+k-het-tg}) consists of considering three distinct points $\theta_1$,  $\theta_2$, $\theta_3$ in $\R/\Z$. Then by  \cref{thm:Meln:final}, the range of  $  J^1_{\alpha } \cal M_+|_{ \mathscr{B}(U)}$
 is 2-dimensional. We will show  using again \cref{thm:Meln:final} that the range of each map  $J^k_{\theta_i } \cal M_-|_{\mathscr{B}(U)}$ is $(k+1)$-dimensional and that  their product  has a range which is $(3k+3)$-dimensional. Equivalently, the kernel of $J^1_{\alpha } \cal M_+|_{\mathscr{B}(U)}$ is two-codimensional while the kernels of the
 $(J^k_{\theta_i } \cal M_-|_{\mathscr{B}(U)})_{1\le i\le 3}$ are $(k+1)$-codimensional and in general position.
By an elementary algebraic argument, this will imply that   the two-codimensional kernel of $J^1_\alpha\cal M_+|_{\mathscr{B}(U)}$ must be in general position with one of the 3 latter kernel of $ J^k_{\theta_i} \cal M_- |_{\mathscr{B}(U)}$, and so that  the product
$J^1_\alpha\cal M_ \times J^k_{\theta_i} \cal M_-|_{\mathscr{B}(U)} $ is onto. This implies immediately \cref{prop:transverse+k-het-tg}.
%
%


 \subsubsection{Step 4: Existence of homoclinic tangency unfolding non-degenerately in $\mathscr{B}(\R^3)$}
By the standard Poincar\'e-Melnikov Theorem (see \cref{sec:Melnikov}), the Melnikov operators $\cal M_+$ and $\cal M_-$ are the first order approximations of the displacement operators $\mathrm{displ}_+$ and $\mathrm{displ}_-$ respectively. For $\pm\in\{-,+\}$, each displacement operator $\mathrm{displ}_\pm$ associates to a sufficiently small perturbation of $X$ the distance function between the perturbed local stable and unstable manifolds at the heteroclinic link $\Gamma^\pm$, defined over some fundamental domain.

Together with \cref{prop:transverse+k-het-tg}, we then deduce that there exists a arbitrarily small neighborhood $\cal N$ of $0$ in $\Gamma^\infty_\leb(\R^3)$ such that the map
 $
 \cal N\cap\cal W\to \left( J^1_\alpha(\mathrm{displ}_+(W)), J^k_\beta(\mathrm{displ}_-(W)) \right)\in \R^{k+3}
 $  is a diffeomorphism onto  a neighborhood of $0$.

 For $k=2$, we can then select a family of Beltrami fields $(X_p=X+W_p)_{p\in\I}$ with $W_p\in\cal N\cap \cal W$ so that each $X_p\in\mathscr{B}(U)$ breaks $\Gamma^+$ to a transverse heteroclinic intersection 
 and the family $(X_p)_{p\in\I}$ of Beltrami fields in $\mathscr{B}(U)$ unfolds   $\Gamma^-$ to a quadratic heteroclinic tangency at some parameter $p_0$.

Thanks to the transverse heteroclinic intersection, we can use the para-inclination Lemma (see \cite{Berger2016}) to study the displacement operator between local stable and unstable manifolds of the same saddle point. It will imply that any sufficiently small $2$-jet of the displacement function will still be realised at some orbit. Thus, there exists a family of Beltrami vector fields $(X+\tilde W_p)_{p\in\I}$ in $\mathscr{B}(U)$, with $\tilde W_p\in\cal N\cap\cal W$ and the family $(\tilde W_p)_{p\in\I}$ $C^\infty$-close to the family $(W_p)_{p\in\I}$, unfolding non-degenerately a homoclinic quadratic tangency at a parameter $p_1$ close to $p_0$.
Now put $Y:=  X+\tilde W_{p_1}$ and $Z:= \partial_p \tilde W_{p_1}$. Since unfolding non-degenerately a quadratic homoclinic tangency is a $C^2$-robust condition on families, the family  $(Y+p\cdot Z)_{p\in \I}$ unfolds non-degenerately a homoclinic quadratic tangency at $p=0$.

Up to shrinking $U$ if necessary, we can assume it is bounded and such that $\R^3\setminus U$ is connected. Then the Global Approximation Theorem \ref{GAT} asserts the existence of fields $\tilde Y$ and $\tilde Z$ in
$\mathscr{B}(\R^3)$ such that their restrictions on $U$ are close to $Y$ and $Z$, respectively.
Since unfolding non-degenerately a quadratic homoclinic tangency is a robust condition, we conclude that $(\tilde Y+p\cdot \tilde Z)_{p\in\I}$ is the sought family of Beltrami fields in $\mathscr{B}(\R^3)$.

%


\cn

\subsection{Structure of the proofs of Theorems \ref{ThmB} and \ref{ThmC}: existence and density of  (para)-universal dynamics}
The strategy to prove Theorems \ref{ThmB} and \ref{ThmC} is to find in any non-empty open subset $\mathcal U \subset \cal N_{\mathscr B}(\R^3)$ and for any $k\ge 1$ a family $(X_p)_{p\in \I^k}$ of Beltrami vector fields $X_p\in \mathcal U$  which unfolds non-degenerately $k$ orbits of quadratic homoclinic tangencies for $k$ saddle cycles which are homoclinically related. Then we will restate the seminal Gonchenko-Shilnikov-Turaev theory \cite{Tu14,GST07} to show   the  existence and density of   universal and para-universal dynamics, which are the statements of  Theorems \ref{ThmB} and \ref{ThmC}.

The main technical novelty of the proof of Theorems \ref{ThmB} and \ref{ThmC} is to show  the existence of  a family $(X_p)_{p\in \I^k}$ of Beltrami vector fields $X_p\in \mathcal{U}$ which unfolds non-degenerately $k$ orbits of quadratic homoclinic tangencies for $k$ saddle cycles which are homoclinically related. This will be done by exploring the beautiful work of Duarte to obtain homoclinically related but disjoint wild hyperbolic sets in order to  apply  perturbative \cref{GATcoro}.

Before giving more details on these proofs,  let us now formalize the statements to be proved,  by   recalling standard definitions in bifurcation theory of  surface diffeomorphisms or 3-dimensional flows.

\begin{definition}\label{multi homo}
Let $J\ge 1$ and 
 let  $(f_p)_{p\in \I^J}$ be a family  of surface diffeomorphisms. Assume that $f_0$ has a saddle periodic orbit $O$ which displays  $J$ different quadratic homoclinic tangencies at $q_j\in W^s(O; f_0)\cap W^u(O; f_0)$ for $1\le j\le J$.  This \emph{$J$-tuple of  homoclinic tangencies unfolds non-degenerately} at $p=0$ if, with $\mu_j(p)$ a  relative position  between $W^s(O; f_p)$ and $W^u(O; f_p)$ at $q_j$,  the following map is a local diffeomorphism at $p=0$:
\[p\in \I^J\mapsto (\mu_j(p))_{1\le j\le J}\in \R^J\; .\]
\end{definition}
Let us state the vector field counterpart of Definition \ref{multi homo}.
\begin{definition}
A \emph{family $(X_p)_{p\in \I^J}$ of vector fields displays  a $J$-tuple of  homoclinic tangencies that unfolds non-degenerately at $p=0$}  if there is a Poincar\'e section $\Sigma$ for $X_0$ which defines
a family  of surface diffeomorphisms $(f_p)_{p\in (-\epsilon, \epsilon)^J}$
which   displays  a saddle point with a $J$-tuple of   homoclinic tangencies that  unfolds non-degenerately at $p=0$.
\end{definition}

We shall show that the proof \cite{GST07} on universal conservative $C^r$-surface dynamics actually applies to the following broader setting:
\begin{definition}
Let $\cal F$ be a Fr\' echet manifold formed by smooth symplectic surface diffeomorphisms or smooth conservative flows of a 3-manifold. The space  $\cal F$ is \emph{GST-wild} if the following property holds true. For every $J\ge 1$ and every non-empty open subset $\mathscr U$ of $\cal F$,
there  is  a smooth parameter family   of  dynamics in $\mathscr U$
parametrized by $\I^J$  which  displays  a $J$-tuple of quadratic  homoclinic tangencies that unfolds non-degenerately at $p=0$.
\end{definition}

It follows from classical techniques \cite{Ku63,Sm63,BT86} that  the Newhouse domains of $C^\infty_\leb$ or $C^\omega_\leb$-surface dynamics  and the ones of $ C^\infty_{\leb}$ or
$ C^\omega_{\leb}$-flows on $\R^3$  are GST-wild.

The Gonchenko-Shilnikov-Turaev theorem states that universality is generic in GST wild spaces and that they are para-universal. See also \cite{GelTur10}.
These notions of universality are defined   in the setting of vector fields in \cref{def universal,def para universal} using the notion of   renormalized iteration defined in  \ref{def renormalized ietration}.  These are canonically translated into the discrete setting:
\begin{definition}
A \emph{renormalized iteration} $G$ of a $C^\infty_\leb$-diffeomorphism $f$ of a surface $M$  equals to  a return map   at some disk of $M$  in rescaled coordinates. More precisely, there are nested disks $\check \Sigma \Subset \Sigma\subset M$, an integer $N\ge 1$ and coordinates $\phi: \R^2\to   \Sigma$ sending $\D$ onto $\check \Sigma$ such that $f^N(\check \Sigma) \subset \Sigma$ and $G= \phi^{-1} \circ f^N\circ \phi|_{\D}$ is symplectic.

\end{definition}

\begin{definition}[\cite{BD00,Tu03,GST07}]\label{def universal discrete}
A $C^\infty_\leb$-diffeomorphism $f$ of a surface $M$ is \emph{universal} if the set of renormalized iterations of $f$ is dense in $\Diff^\infty_{\leb+}(\D,\R^2)$. The notions of compactly universal and para-universal are defined similarly.
\end{definition}
\begin{remark}
A surface map $f$ is (resp. compactly) universal iff its suspension defines a vector field which is (resp. compactly) universal.
\end{remark}

We will extract from part of the proof of   \cite{Tu03,GST07} the following abstract result:
\begin{theorem}[Gonchenko-Shilnikov-Turaev]\label{TGST}
Let $\cal F$ be a Fr\' echet submanifold  of the space of smooth
conservative vector fields or the space of smooth symplectic surface diffeomorphisms
 which is GST wild.
 Then there exists a  topologically generic subset  $\cal G\subset \cal F$   formed by compactly  universal dynamics.
   Moreover $\cal F$ is para-universal.
\end{theorem}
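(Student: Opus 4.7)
The plan is to reduce the theorem to the classical Gonchenko--Shilnikov--Turaev renormalization theorem near a non-degenerately unfolded multi-tangency, invoking the latter as a black box once the GST-wild hypothesis has supplied the required local geometric input. By \cref{para universal are universal}, a Baire category argument (using that $\cal F$ is a Fréchet manifold and that $\Diff^\infty_{\Leb+}(\D, \R^2)$ admits a countable basis of open sets) produces a topologically generic subset $\cal G \subset \cal F$ of compactly universal dynamics as soon as $\cal F$ is shown to be \emph{compactly} para-universal. Hence the bulk of the proof consists in establishing compact para-universality. Via the suspension correspondence (``A surface map $f$ is compactly universal iff its suspension is'' is routine), it is enough to treat one of the two cases; I would work with the discrete case.

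Fix $J \ge 0$, non-empty open subsets $\mathcal V \subset \cal F$ and $\mathcal O_J \subset \Diff^\infty_{\Leb+}(\D, \R^2)_{\I^J}$. First, I would approximate an arbitrary target family $(g_p)_{p\in \I^J} \in \mathcal O_J$, up to a $C^N$-error for $N$ large, by a smooth family of polynomial symplectomorphisms of some degree $k$; this uses the classical density of conservative polynomial maps in $\Diff^\infty_{\Leb+}(\D,\R^2)$ in the $C^N$-topology on compact sets. I then apply the GST-wild hypothesis to $\mathcal V$ with an integer $M = M(J,k)$ large enough to generate, through the GST rescaling scheme, all Taylor coefficients up to order $k$ of a $J$-parameter family. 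This yields a smooth family $(X_q)_{q\in \I^M} \subset \mathcal V$ displaying at $q=0$ a saddle periodic orbit $O$ with an $M$-tuple of quadratic homoclinic tangencies that unfolds non-degenerately.

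The core step is the GST rescaling analysis of \cite[\textsection 3--4]{GST07}, \cite{Tu03,GelTur10}: for each tangency orbit indexed by $j=1,\dots,M$, for a suitably chosen large iterate $N_j$ of the local first return map between cross-sections $\Sigma_j$ chosen near the tangency, there exist affine rescaling coordinates $\phi_q^{(j)}\colon \R^2 \hookrightarrow \Sigma_j$ such that the rescaled return map
\[
G_q^{(j)} := (\phi_q^{(j)})^{-1} \circ f_q^{N_j} \circ \phi_q^{(j)} \big|_{\D}
\]
converges, in the $C^\infty$ topology on $\D$ and smoothly in $q$, to a polynomial symplectic ``Hénon-like'' map whose $k$-jet depends affinely and surjectively on the unfolding coordinates $\mu_j(q)$ introduced in \cref{multi homo}. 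Non-degeneracy of the unfolding ensures that $q\mapsto (\mu_j(q))_{j=1}^M$ is a local diffeomorphism at $0$, and the dimension $M$ has been chosen precisely so that the induced map from the parameter space to the target jet-space of $J$-parameter polynomial symplectic families is surjective onto a neighborhood of the jet of $(g_p)$. Reparametrizing $p \in \I^J \hookrightarrow \I^M$ by a suitable affine embedding $\sigma$ and passing to a sufficiently large iterate, the family of renormalized iterations $(G_{\sigma(p)}^{(j)})_{p\in \I^J}$ approximates $(g_p)_{p\in \I^J}$ in any prescribed $C^N$-norm, and hence lies in $\mathcal O_J$ once $N$ was chosen large enough at the start. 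All orbits remain in a fixed bounded neighborhood of the tangency orbit $O$, so we obtain compact para-universality.

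The main technical obstacle is the surjectivity-on-jets statement, i.e., making rigorous the dimension count $M=M(J,k)$ and verifying that the affine map from unfolding parameters to coefficients of the limiting polynomial symplectic family is onto. This is the algebraic heart of the GST machinery, carried out in \cite{GST07,Tu03,GelTur10} under exactly the hypothesis of a non-degenerate multi-tangency of a single saddle; GST-wildness of $\cal F$ has been formulated precisely so that this input is supplied in any open subset $\mathcal V$, and the remainder of the argument is structural and topological.
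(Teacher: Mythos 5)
Your reduction to compact para-universality via \cref{para universal are universal} and the overall ``renormalize near a non-degenerate multi-tangency'' philosophy match the paper. But the middle of the argument contains two genuine gaps where the GST machinery is not correctly invoked.

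First, you pass directly from the $M$-tuple of \emph{quadratic} homoclinic tangencies to rescaling ``for each tangency orbit indexed by $j=1,\dots,M$''. Rescaling near a single quadratic tangency only produces the degree-$2$ Hénon family $(x,y)\mapsto(y,-x+a-y^2)$; it cannot produce higher-degree Hénon-like maps no matter how many quadratic tangencies you rescale independently. The paper's route interposes \cref{Thmlem5GST07} (Lemma~5 of \cite{GST07}): a non-degenerately unfolded $M$-tuple of quadratic tangencies of the same saddle gives, at a nearby parameter, a single homoclinic tangency of \emph{multiplicity} $M$ unfolded non-degenerately. Only then does \cref{Lemma6GST bis} apply and deliver renormalizations close to the full family of conservative Hénon maps $(x,y)\mapsto (g(x)-y,x)$ with $\deg g< M$. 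Your proposal omits this merge step entirely, and without it the rescaling does not yield the required polynomial degree.

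Second, and more seriously, you claim the rescaled return maps have $k$-jets that ``depend affinely and surjectively on the unfolding coordinates'' so as to approximate an arbitrary family of polynomial symplectomorphisms near the target $(g_p)$. This conflates the Hénon-like normal forms with general polynomial symplectomorphisms. The limits of the rescaled return maps live in the very rigid Hénon form $(x,y)\mapsto (g(x)-y,x)$; there is no surjectivity onto jets of arbitrary symplectic polynomial maps. The bridge from Hénon maps to a dense subset of $\Diff^\infty_{\Leb+}(\D,\R^2)$ (and to families) is Turaev's theorem, \cref{Lemma9GST}: renormalized iterations of Hénon maps are dense, and this is a deep non-trivial input, not a consequence of jet-counting. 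The paper's proof uses it explicitly: first pick a Hénon family $(H_{g_p})_{p\in\I^J}$ whose renormalization lies in $\cal O_J$ (Turaev), then use GST-wildness plus \cref{Thmlem5GST07} and \cref{Lemma6GST bis} to realize $(H_{g_p})$ as a renormalized iteration of a family in $\mathcal V$, and finally use that a renormalization of a renormalization is a renormalization. Your proposal's ``density of conservative polynomial maps'' plus ``surjectivity on jets'' replaces Turaev's theorem with a claim that is false as stated.
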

This statement did not appear written like this in Gonchenko-Shilnikov-Turaev works, but it can be deduced from a simple combination of their lemmas following a general scheme they use. This scheme consists of extracting from a $k$-tuple of quadratic homoclinic tangencies a homoclinic tangency of multiplicity $k$ which unfolds non-degenerately \cite{GST07}. Then from this non-degenerate unfolding of homoclinic tangency of multiplicity $k$, a family of renormalizations equal to the full family of H\'enon maps of degree $k$ emerges \cite{GST07}. Finally they use Turaev's Theorem which asserts that any map or family of maps in $\Diff^\infty_\leb(\D, \R^2)$  can be approximated by a renormalization of an iterate of a H\'enon map of some large degree \cite{Tu03}. This beautiful scheme will be detailed in \cref{proof TGST}.  \medskip

The main issue of the proof of Theorems \ref{ThmB} and \ref{ThmC} is to reach the setting of the Gonchenko-Shilnikov-Turaev theory using only  perturbative  \cref{GATcoro}, as given by the following:
	\begin{theo}\label{ThmD}
The open set  $\cal N_{\mathscr B}(\R^3)$ in  ${\mathscr B}(\R^3)$ is GST wild.
%
\end{theo}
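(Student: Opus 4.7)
The proof follows the blueprint of \cref{thm hom tg} (the proof of \cref{ThmA}), but realizes $J$ independent quadratic homoclinic tangencies for a common saddle orbit by using $J$ pairwise disjoint perturbations supplied by an iterated version of \cref{GATcoro}, rather than the single homoclinic tangency controlled by the $3$-point linear algebra trick of \cref{prop:transverse+k-het-tg}.

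\textbf{Geometric setup via Newhouse--Duarte.} Given any non-empty open subset $\mathscr U\subset\cal N_{\mathscr B}(\R^3)$, \cref{ThmA} yields a Beltrami field $X\in\mathscr U$ exhibiting a non-degenerately unfolding quadratic homoclinic tangency, so (by Duarte's theorem on wild horseshoes applied inside the Newhouse domain) we may assume $X$ possesses a wild hyperbolic set. Duarte's fine analysis of the geometry of wild sets provides, after a further arbitrarily small perturbation inside $\mathscr U$, a common saddle orbit $O$ and $J$ pairwise disjoint, non-nested sub-horseshoes $\Lambda_1,\dots,\Lambda_J$ homoclinically related to $O$ and pairwise separated by compact surfaces. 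In each $\Lambda_j$, density of homoclinic tangencies in wild horseshoes produces a quadratic homoclinic tangency orbit for some saddle $O_j\in\Lambda_j$ which, via the $\lambda$-lemma applied to the homoclinic relation between $O$ and $O_j$, is shadowed by a quadratic homoclinic tangency orbit $\tau_j$ of $O$ itself. The non-nestedness permits us to enclose a fundamental domain of $\tau_j$ (together with an orbit segment of $\Lambda_j$) in a compact set $K_j\subset\R^3$, such that the $K_j$ are pairwise disjoint and $\R^3\setminus(K_1\cup\cdots\cup K_J)$ is connected (by iterating the topological argument of \cref{appendix classical topo}).

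\textbf{Independent Melnikov unfolding.} An immediate $J$-fold iteration of \cref{GATcoro} (applied to the local Beltrami field that equals $X$ on a small neighborhood of $K_j$ and $0$ on the complementary $K_i$'s, using connectedness of $\R^3\setminus\bigcup_j K_j$) produces, for any local datum, Beltrami fields $\tilde W_1,\dots,\tilde W_J\in\mathscr B(\R^3)$ close to the prescribed datum on $K_j$ and arbitrarily small on $K_i$ for $i\neq j$. Combining this with the Melnikov machinery of \cref{prop:Meln:smooth} and \cref{thm:Meln:final} at each tangency orbit $\tau_j$ --- which via \cref{cor:CK} furnishes a local Beltrami field on a neighborhood of a fundamental domain of $\tau_j$ realizing any prescribed first-order Melnikov datum --- we obtain a smooth $J$-parameter family $(X+\sum_{j=1}^J p_j \tilde W_j)_{p\in\I^J}$ of Beltrami fields in $\mathscr U$ whose Melnikov functions at $\tau_1,\dots,\tau_J$ are independently controlled by $p_1,\dots,p_J$. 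By the Poincar\'e--Melnikov theorem (and, if needed, the para-inclination Lemma to transfer control from heteroclinic intersections to homoclinic tangencies of $O$, as in Step~4 of \textsection\ref{sketch thm A}), this family unfolds the $J$-tuple of quadratic homoclinic tangencies non-degenerately at $p=0$.

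\textbf{Main obstacle.} The heart of the argument is the geometric construction of the second step: extracting, inside a single Beltrami field of $\mathscr U$, $J$ pairwise disjoint and non-nested wild hyperbolic sets homoclinically related to a common saddle $O$. The non-nestedness is precisely what ensures that the union of the perturbation supports has connected complement in $\R^3$, which is the topological hypothesis required to invoke the $J$-fold \cref{GATcoro}. In the rigid world of Beltrami fields, where only globally controlled perturbations are available, this separability is what permits the $J$ independent Melnikov controls to avoid destructive interference, and it is the reason the proof must rely on Duarte's detailed topology of wild horseshoes rather than on a purely Melnikov-theoretic argument as in \cref{ThmA}.
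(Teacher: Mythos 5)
There are two genuine gaps in your argument, both at steps you treat as routine.

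\textbf{Gap 1: the Melnikov machinery does not apply to homoclinic tangency orbits.} You propose to invoke \cref{prop:Meln:smooth}, \cref{thm:Meln:final} and \cref{cor:CK} at each tangency orbit $\tau_j$ of $O$ to realize a prescribed first-order Melnikov datum. But the paper's entire Melnikov theory (Appendix A and \cref{section CK and GAT}) is built on the hypothesis that the invariant object is a \emph{strong heteroclinic link} $\Gamma$, an invariant surface whose closure is diffeomorphic to $\R/\Z\times\I$ and on which the strong foliations match; this is exactly what makes \cref{cor:CK} (Cauchy--Kovalevskaya for curl, which requires a closed analytic cylindrical surface as Cauchy hypersurface) and the definition of the Melnikov operator applicable. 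A homoclinic tangency orbit of a saddle periodic orbit is a curve, not such a cylinder, and in general it bounds no invariant analytic cylinder of the required type. So the crucial realizability step collapses. The paper's proof of \cref{ThmD} never uses the Melnikov machinery for precisely this reason: it exploits the fact that a non-degenerate unfolding direction $X_1$ is already \emph{given} by membership in $\cal N_{\mathscr B}(\R^3)$, and the new perturbations $X_2,\dots,X_{J}$ are built by cutting $X_1$ off to the relevant disjoint compacts via \cref{GATcoro}, with no need to reconstruct an unfolding from scratch via Cauchy data.

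\textbf{Gap 2: $J$ pairwise disjoint wild horseshoes at a single parameter do not come for free from Duarte.} You assert that Duarte's analysis, after a small perturbation, directly delivers $J$ pairwise disjoint, non-nested sub-horseshoes homoclinically related to a common saddle $O$. Duarte's \cref{Duarte thm} produces a \emph{sequence} of horseshoes $\Lambda_n$, one for each of a sequence of parameters $p_n\to 0$; there is no statement that any two of them coexist disjointly at the same parameter, and indeed they all accumulate on the same set $P\cup Q$. Disjointness has to be manufactured: the paper does this inductively (\cref{main2casek1double}, \cref{premain2}) by first obtaining $\Lambda_p$ and one of its tangency orbits $Q$, then running Duarte a second time near a point $Q_0\in Q$, invoking \cref{Duarte coro} to guarantee every orbit of the new horseshoe $\Lambda'_p$ passes through a small neighborhood $U_0$ of $Q_0$ disjoint from $\Lambda_p$; this is what forces $\Lambda_p\cap\Lambda'_p=\emptyset$. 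Beyond disjointness of the horseshoes one must control the whole homoclinic tangle: the paper shows the unions $K_p$, $K'_p$ of the horseshoes with their tangency orbits are \emph{totally disconnected} compacts (\cref{disconected}), and uses this with the solid-torus structure of the suspension to get connected complement for the suspended sets $C_p$, $C'_p$ (\cref{lemma conn compl}). Your appeal to ``non-nestedness'' and ``separating compact surfaces'' skips this entire analysis, which is the actual geometric heart of the argument. The inductive structure is not cosmetic: each step of \cref{premain2} reapplies Duarte along a carefully chosen one-parameter sub-path that unfolds only the newest tangency while freezing the previously constructed ones, which is how the induction hypothesis is preserved.

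In short, your proposal is right that \cref{GATcoro} plus disjointness of supports is the engine, and right that the para-inclination lemma transfers tangencies back to $O$; but it replaces the paper's simple cut-and-reuse of $X_1$ by a Melnikov reconstruction that the available PDE tools do not support, and it replaces the necessary inductive disjointification by an unsupported one-shot claim.
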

To show this theorem, we fix a nonempty open subset $\mathscr U$ of
 $\cal N_{\mathscr B}(\R^3)$.  By definition of $\cal N_{\mathscr B}(\R^3)$, there are $X_0\in \mathscr U$ and a Beltrami field $W_1$ such that
   the family $(X_p=X_0+p\cdot W_1)_{p\in \mathbb{I}}$   unfolds a quadratic homoclinic tangency at $p=0$.  Then we will explore  the powerful   work of Duarte  \cite{Du99,Du} to construct a
 hyperbolic basic set with a robust homoclinic tangency. From this we will extract a sub-parameter family which displays a homoclinically related
but disjoint   hyperbolic set displaying a robust homoclinic tangency.  Using the fact that these hyperbolic sets are disjoint, we will cutoff  the perturbation  $W_1 $ using perturbative \cref{GATcoro} to create a new Beltrami field $W_2$ on $\R^3$. This will create a two parameter family $(X^1_p=X_0+p_1\cdot W_1+p_2\cdot W_2)_{p=(p_1,p_2)\in \I^2}$ unfolding non-degenerately two quadratic homoclinic tangencies at a parameter $p$ close to $0$. We will iterate inductively this construction to obtain the sought $k$-parameter families. The proof will be done in \cref{proof thm D}.

Now let us remark that \cref{TGST,ThmD} immediately imply Theorems \ref{ThmB} and \ref{ThmC}:
\begin{proof}[Proof of Theorems \ref{ThmB} and \ref{ThmC}]
By \cref{ThmD}, the Baire space $\cal N_{\mathscr B}(\R^3)$
is GST wild, and so it satisfies  the assumptions of \cref{TGST}.
Thus a topologically generic  subset of $\cal N_\mathscr{B}(\R^3)$ is formed by compactly universal vector fields   and  $\cal N_{\mathscr B}(\R^3)$ is compactly  para-universal as stated in  Theorems \ref{ThmB} and \ref{ThmC}.
\end{proof}
\section{Elements of bifurcation theory}
\subsection{Involved topologies}\label{def of topology}
Let us precise the topologies involved in the statements of the main results.

For every $k\ge 0$, let $B_k\subset\R^3$ be the ball centered at zero of radius $k$.
We recall that $ \Gamma^\infty_{\leb}(\R^3)$ endowed with the family $(N_k)_k$ of semi-norms $N_k: X\mapsto  \Vert X|_{B_k} \Vert_{C^k}$ is a Fr\'echet space.   Equivalently, the topology is defined by the following complete distance:
	\[
	d_{C^{\infty}}: (X,Y)\in \Gamma^\infty_{\leb}(\R^3)^2\mapsto \sum_{k=0}^\infty\dfrac{1}{2^{k}}\min \left(   N_k(X-Y)
	, 1 \right)\, .
	\]
Note that the subspace of Beltrami fields is closed in $ \Gamma^\infty_{\leb}(\R^3)$, but the subspace of Beltrami fields with sharp decay is not closed.

Hence we endow $ \mathscr B(\R^3)$ with the following
family $(\tilde N_\alpha)_{\alpha\in \N^3}$ of norms:
\[
\tilde N_\alpha: X\mapsto  \sup_{x\in \R^3}  (1+\|x\|)\cdot \|D^\alpha X(x)\|
\] with which it is a Fr\'echet space. Equivalently the topology is defined by the following complete distance:
	\[
		d_{\mathscr B}: (X ,Y )\in \mathscr B(\R^3)^2\mapsto \sum_{\alpha\in \N^3}
2^{-|\alpha|}\min \left( \tilde N_\alpha(X-Y)
,   1 \right)
\;
\]

\subsection{The Gonchenko-Shilnikov-Turaev theory}
\label{proof TGST}

\subsubsection{Constructing non-degenerate unfoldings of homoclinic tangencies of any  multiplicity}
A key configuration in the Gonchenko-Shilnikov-Turaev's theory is that of homoclinic tangencies of higher multiplicity. Let us state this notion in the setting of surface diffeomorphism; using Poincar\'e return maps we will translate it into the context of 3-dimensional flows at the end.
\begin{definition}\label{hom tg higher mult} For $k\ge 1$, a  saddle periodic point $P$ of a surface  diffeomorphism $f$ displays a \emph{homoclinic tangency of multiplicity $k$}, if the local stable and unstable manifolds of $P$ are tangent at a point $Q$ and the order of contact of the tangency is $k$. More precisely this means that there is a chart of a  neighborhood $N$  of $Q$ which  identifies  $Q\equiv 0$,
 $W^u_{loc} (P)\cap N\equiv\I\times \{0\}$ and $W^s_{loc} (P)\cap N$ with the graph of a function $w: \I\to \R$ such that:
\[0=w(0)=Dw(0)= \cdots= D^k w(0)\qand  D^{k+1} w(0)=1\; .\] \end{definition}

\begin{example} We notice that a quadratic homoclinic tangency is a homoclinic tangency of multiplicity $1$. A cubic homoclinic tangency is a homoclinic tangency of multiplicity $2$.\end{example}

Now let $(f_p)_{p\in \I^k}$ be an unfolding of  a surface  diffeomorphism $f=f_0$  displaying a homoclinic tangency of multiplicity $k\ge 1$ for a saddle point $P$. Then the periodic point $P$ persists for $p$ small, and its local stable and unstable manifolds as well. This induces an unfolding of the homoclinic tangency.
\begin{definition} The unfolding of the homoclinic tangency of multiplicity $k$  is  \emph{non-degenerate} if  there are parameter dependent coordinates of a neighborhood $N$ of $Q$
which identify  $Q\equiv 0$, $ W^u_{loc} (P; f_p)\cap N\equiv  \I\times \{0\}$ and  $W^s_{loc} (P; f_p)\cap N$ with $\{(x, w_p(x)): x\in \I\}$ where $(w_p)_{p\in\I^{k}}$  satisfies that:
\begin{itemize}
	\item it holds  $w_0(0)=Dw_0(0)= \cdots= D^k w_0(0)=0$  and $D^{k+1}w_0(0)\neq 0 $,
	\item the   map  $ p\in \I^k\mapsto (D^j w_p(0))_{0\le j\le k-1}\in \R^k $ is a diffeomorphism onto its image.
\end{itemize} \end{definition}

When the multiplicity of the tangency is 1, or equivalently when the homoclinic tangency is quadratic, saying that the unfolding is non-degenerate means roughly speaking that  the \emph{relative position} $p\mapsto w_p(0)$ of the continuation of the  stable and unstable manifolds at the tangency point has non-zero derivative at $p=0$.

The following is the first Theorem of the GST-theory. It enables to transform a non-degenerate unfolding of $J$-quadratic homoclinic tangencies into a non-degenerate unfolding of a homoclinic tangency of multiplicity $J$:
\begin{theorem}\label{Thmlem5GST07}
Let $(f_p)_{p\in \I^J}$ be a family of surface diffeomorphisms which unfolds non-degenerately at $p=0$ a $J$-tuple of (different)  quadratic homoclinic tangencies of a saddle orbit $O$.
Then there is $p_1\in \I^J$ arbitrarily small such that at this parameter, the (hyperbolic continuation of the) saddle  $O$ displays a homoclinic tangency of multiplicity $J$. Moreover this homoclinic tangency of multiplicity $J$  is non-degenerately unfolded by the family $(f_p)_{p\in\I^J}$.
%
%
\end{theorem}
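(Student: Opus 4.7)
The plan is to convert the $J$-tuple of quadratic homoclinic tangencies into a single homoclinic tangency of order $J+1$ at a newly chosen point, using the $J$ independent parameters $\mu_j(p)$ to annihilate the first $J$ Taylor coefficients of a suitable contact function. The central mechanism is that the saddle structure near $O$ propagates each perturbation at $q_j$ to a controlled perturbation at a chosen fundamental domain via the $\lambda$-lemma, with eigenvalue-dependent scaling that provides $J$ linearly independent effects from the $J$ parameters.

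First I would fix smooth (or at least $C^J$) linearizing coordinates $(x,y)$ near $O$ for the reference map $f_0$, so that $W^u_{loc}(O;f_0)=\{y=0\}$ and $W^s_{loc}(O;f_0)=\{x=0\}$, and I would apply large inverse iterates $f_0^{-n_j}$ to bring the branch of $W^s(O;f_p)$ through $q_j$ back into a common fundamental domain along $W^u_{loc}$. By the $\lambda$-lemma, each pulled-back branch is a curve $y=\psi_j(x;p)$ which is $C^J$-close to $W^u_{loc}$ as $n_j\to\infty$; at $p=0$ each $\psi_j$ has a quadratic tangency with $W^u_{loc}$ at some point $a_j\in W^u_{loc}$, and the first-order response to $p$ of the height of this tangency is a nonzero eigenvalue-dependent multiple of $\mu_j(p)$.

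Next I would choose the integers $n_j$ (and correspondingly the abscissae $a_j$) so that the $a_j$ cluster around a single target point $x^*\in W^u_{loc}$ while remaining pairwise distinct. The $J$ separate curves $\psi_j$ then encode, through their displacements and tangent data at $x^*$, the $J$-jet of a single smooth contact function $w(\,\cdot\,;p)$ between $W^u_{loc}$ and an appropriately selected branch of $W^s(O;f_p)$ near $x^*$. A direct computation, essentially reducing to a Vandermonde determinant in the shifts $a_j-x^*$, shows that the map
\[
p\in\I^J\longmapsto\bigl(w(x^*;p),\,Dw(x^*;p),\ldots,D^{J-1}w(x^*;p)\bigr)\in\R^J
\]
is, at $p=0$, the composition of this Vandermonde matrix with the diffeomorphism $p\mapsto(\mu_1(p),\ldots,\mu_J(p))$, hence a local diffeomorphism. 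The implicit function theorem then provides $p_1$ arbitrarily close to $0$ at which this $J$-jet vanishes, producing a homoclinic tangency of multiplicity $J$ at $x^*$. The same local-diffeomorphism property forces the family $(f_p)_{p\in \I^J}$ to unfold this tangency non-degenerately, provided $D^{J+1}w(x^*;p_1)\neq 0$, which holds automatically from the quadratic non-degeneracy of the original $q_j$ (it is an open condition already satisfied at $p=0$).

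The main obstacle is the explicit assembly of the $J$ separate iterated branches of $W^s$ into a single smooth contact function $w$ near $x^*$ whose $J$-jet depends linearly on $(\mu_j(p))_j$ through an invertible matrix. The invertibility reduces to a Vandermonde-type determinant in the shifts $a_j-x^*$, but making this precise requires careful bookkeeping of the saddle-rescalings entering the $\lambda$-lemma expansions uniformly in $p$; this is the technical heart of the construction, corresponding to the detailed argument of \cite{GST07}.
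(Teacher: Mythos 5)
Your proposal aims for a ``one-shot'' construction, whereas the paper proves the theorem by a simple induction on the multiplicity: it cites \cite[Lemma 5]{GST07}, which takes a non-degenerate unfolding of a tangency of multiplicity $k-1$ together with a non-degenerately unfolded quadratic tangency (in a complementary parameter direction) and produces, at a nearby parameter, a multiplicity-$k$ tangency that still unfolds non-degenerately; applying this for $k=2,\dots,J$ gives the statement. The geometric mechanism behind that lemma, illustrated in Figure~11 of \cite{GST07}, is \emph{sequential}: one iterates a segment of the unstable manifold through the quadratic fold created near one tangency orbit, then lets it arrive near the higher-order tangency, and the composition of the parabolic fold with the existing $(k-1)$-fold contact yields a $k$-fold contact. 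Each quadratic tangency raises the contact order by one \emph{through dynamical composition}, not through superposition.

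This is precisely where your argument has a gap. After pulling the $J$ branches of $W^s(O;f_p)$ through $q_1,\dots,q_J$ back into a common fundamental domain by $f_0^{-n_j}$, you obtain $J$ \emph{distinct} curves $\psi_1,\dots,\psi_J$, each quadratically tangent to $W^u_{loc}$ at a point $a_j$ and whose individual tangency height depends on $\mu_j(p)$. There is no canonical ``single smooth contact function $w$'' whose $J$-jet at $x^*$ is an invertible linear function of $(\mu_1(p),\dots,\mu_J(p))$: a vertical displacement of a curve changes only the zeroth-order coefficient of its Taylor expansion, and clustering the abscissae $a_j$ near $x^*$ does not produce a Vandermonde relation between the displacements $\mu_j$ and the higher derivatives $Dw(x^*),\dots,D^{J-1}w(x^*)$. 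The Vandermonde matrix arises in polynomial \emph{interpolation} of a single function from its values at several points; here you have several functions, not several values of one function, so that reduction does not apply. Obtaining a genuine order-$J$ contact requires routing a single segment through the $J$ folds one after another --- exactly the content of \cite[Lemma 5]{GST07} and of the inductive proof in the paper. The rest of your outline (linearizing coordinates, the $\lambda$-lemma, that the $J$-jet map being a local diffeomorphism at $p_1$ gives both existence and non-degenerate unfolding, and that $D^{J+1}w\neq 0$ is open) is reasonable once that assembly step is supplied, but the assembly step is the theorem, not a bookkeeping detail, and your Vandermonde shortcut does not provide it.
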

\begin{proof}
We actually apply inductively on $2\le k\le J$ the following lemma:
\begin{lemma}[{\cite[Lem. 5 ]{GST07}}] \label{lem5GST07}
Let $k\ge 2$. Let $(f_p)_{p\in \I^k}$ be a $C^\infty$-family  of symplectic  surface diffeomorphisms such that   $f_0$ has a  saddle orbit  $O$ which displays  a homoclinic   tangency of multiplicity $k-1$ and a quadratic homoclinic tangency.
Assume that the first unfolds non-degenerately in the family $(f_p)_{p\in \I^{k-1}\times \{0\}}$ while the second unfolds non-degenerately in the family $(f_p)_{p\in \{0\}\times \I}$. Then there is $\tilde p\in \I^{k}$ arbitrarily  small, such that $O$ displays a  homoclinic  tangency of multiplicity $k$   at $p=\tilde p$ which unfolds non-degenerately in $(f_p)_{p\in \I^k}$.
\end{lemma}
The mechanism of the proof of \cref{lem5GST07} is clearly illustrated in Figure 11 in \cite{GST07}.
\end{proof}

\subsubsection{H\'enon maps as renormalized iterates nearby homoclinic unfoldings}
The next key ingredient of the Gonchenko-Shilnikov-Turaev theory involves the conservative  H\'enon family.

\begin{definition}A conservative H\'enon map of degree $ d$  is a map of the form:
\[H_g:= (x,y)\mapsto (g(x) -y,x);\]
with $g$ a polynomial map of degree $  d$. If $\R_k[X]$ denotes the space of polynomial maps of degree $d\le k$, the  full family of conservative H\'enon map is $(H_g)_{g\in \R_k[X]}$.
\end{definition}
The second theorem of their theory  states that nearby a non-degenerate unfolding of a homoclinic tangency of multiplicity $k$, there are renormalizations close to the family of conservative H\'enon maps of degree $k-1$:
\begin{theorem} \label{Lemma6GST bis}
Let 	$(f_q)_{q\in \I^k}$ be a smooth family of symplectic  diffeomorphisms  of a surface $S$ which  unfolds  non-degenerately a homoclinic tangency of multiplicity $k$ at $q=0$. For every $M>0$, let:
\[\B_k(M):= \{g (X)=  \sum_{i=0}^{k-1} a_i \cdot X^{i}\in \R[X]: a_i\in [-M,M], \;  \forall 0\le i \le k-1\}\]
Then  for every $\epsilon>0$,   there exist:
\begin{itemize}
\item  an embedding $Q: \B_k(M)\hookrightarrow [-\epsilon, \epsilon]^k$ and  an integer $N>0$,
\item  a smooth family $(C_p)_{p\in \B_k(M)}$ of charts $C_p: (-M,M)^2\hookrightarrow  S $ of constant determinant onto a small open set nearby the tangency  point,
\end{itemize}
 such that  $(H_{g}|_{(-M,M)^2} )_{g\in  \B_k(M)}$
 is $\epsilon$-$C^\infty$-close to
 the family   $( \cal R f_g)_{g\in \B_k(M)}$ formed by
 $\cal R f_g:=  C_g^{-1} \circ f^N_{Q(g)}\circ C_g$.  	\end{theorem}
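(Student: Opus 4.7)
The plan is to execute the renormalization scheme of Palis--Takens and Mora--Viana, generalized by GST to multiplicity-$k$ tangencies. The heuristic is that a high iterate $f_q^N$, restricted to a small disk near the tangency point $Q$ and read in appropriately rescaled coordinates, is $C^\infty$-close to a conservative H\'enon map $H_g$ of degree $k$; the correspondence between the unfolding parameter $q\in\I^k$ and the polynomial coefficients $(a_0,\dots,a_{k-1})$ is an affine rescaling, and the non-degeneracy hypothesis ensures this rescaling covers the full box $\B_k(M)$ once $N$ is large enough.

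First I would set up normal forms. Near the saddle orbit $O$ of $f_0$, a Sternberg-type linearization valid in the symplectic category (to sufficiently high polynomial order, with a non-linear remainder that will turn out negligible after rescaling) provides coordinates $(x,y)$ in which the first-return map near $O$ is close to $L(x,y)=(\lambda x,\lambda^{-1}y)$, and $W^u_{loc}(O)$, $W^s_{loc}(O)$ become the two coordinate axes. Near the tangency point $Q$, and smoothly in $q$, I would choose coordinates $(X,Y)$ sending the relevant branches of $W^u_{loc}(O)$ and $W^s_{loc}(O)$ respectively to $\{Y=0\}$ and to the graph $\{X=w_q(Y)\}$, with $w_0(Y)=Y^{k+1}+O(Y^{k+2})$; by the non-degeneracy assumption, $q\mapsto (D^j_Yw_q(0))_{0\le j\le k-1}$ is a diffeomorphism onto a neighborhood of $0$ in $\R^k$.

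Next I would decompose $f_q^N = T_q\circ L_q^{n}\circ S_q$, where $n=N-m_0$ for a fixed $m_0$, $S_q$ is a ``global'' transition from a small disk near $Q$ to an entry box near $O$, $L_q^n$ is the (almost) linear normal-form iterate there, and $T_q$ is a second transition returning to a neighborhood of $Q$. A direct computation shows that, up to higher-order remainders, the composition takes the schematic form
$$
(X,Y)\longmapsto\bigl(w_q(\lambda^{n}Y)-A\,\lambda^{-n}X+\cdots,\ Y+\cdots\bigr),
$$
with $A\ne 0$ depending on the transitions. Setting $X=\lambda^{-(k+1)n}\tilde X/A$ and $Y=\lambda^{-n}\tilde Y$, together with an affine recentering absorbing the constant and linear-in-$q$ offsets of $w_q$, converts the leading part into $(\tilde X,\tilde Y)\mapsto(g(\tilde Y)-\tilde X,\tilde Y)=H_g(\tilde X,\tilde Y)$, with $a_j = c_j\cdot\lambda^{(k+1-j)n}\cdot D^j_Yw_q(0)$ for constants $c_j\ne 0$ and the leading coefficient $a_k$ normalized to any chosen value by a preliminary rescaling of $Y$. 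The embedding $Q:\B_k(M)\hookrightarrow\I^k$ is obtained by inverting this affine correspondence and composing with the diffeomorphism $q\mapsto(D^j_Yw_q(0))_{j}$; its image is a box of diameter $O(\lambda^{-n})$ in $\I^k$. The chart $C_g$ is the composition of the coordinates near $Q$ with the linear rescaling and recentering, so it has diagonal constant Jacobian; symplecticity of $\cal R f_g$ matches that of $H_g$ since $f_q$ is area-preserving and the rescaling multiplies the symplectic form by a constant that can be normalized to $1$.

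The main obstacle will be uniformity in $g$: the remainder terms, coming from the $O(Y^{k+2})$ tail of $w_q$, from the nonlinear part of the Sternberg normal form near $O$, and from the nonlinearities of the transitions $S_q,T_q$, must become $\epsilon$-small in the $C^\infty$-topology on $(-M,M)^2$ uniformly for $g\in\B_k(M)$. This is handled by taking the Sternberg normal form to polynomial order large compared to $k$, so that the remainder on the linearising box contracts faster than $L_q^n$ expands under iteration, and by choosing $N$ large depending on $\epsilon$ and $M$: the rescaling factors $\lambda^{-(k+1)n}$ and $\lambda^{-n}$ shrink the effective support of the remainders while the leading term is preserved by construction. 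Smoothness of $g\mapsto C_g$ then follows from smoothness of $q\mapsto(D^j_Yw_q(0))_j$ and its inverse, giving the statement.
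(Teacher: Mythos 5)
Your plan is to re-derive the full GST renormalization from scratch, whereas the paper's own proof is a two-line reduction: it cites Lemma 6 of \cite{GST07} (Lemma~\ref{Lemma6GST} in the text), which yields renormalizations close to H\'enon maps $H_{g_p}$ with $g_p$ of degree exactly $k$ and unit leading coefficient, and then conjugates by the dilation $(x,y)=(\Lambda^{-1}X,\Lambda^{-1}Y)$, which multiplies the leading coefficient by $\Lambda^{1-k}\to 0$ and lands the family onto $\B_k(M)$. Re-deriving the renormalization is a legitimate alternative, but your central computation is wrong. The schematic form you write,
\[
(X,Y)\longmapsto\bigl(w_q(\lambda^{n}Y)-A\,\lambda^{-n}X+\cdots,\ Y+\cdots\bigr),
\]
has Jacobian $\approx -A\lambda^{-n}\to 0$: this is the dissipative (strongly contracting) case, not the conservative one, and it cannot be the leading part of a symplectic return map. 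Accordingly, the rescaled limit you claim, $(\tilde X,\tilde Y)\mapsto(g(\tilde Y)-\tilde X,\tilde Y)$, is \emph{not} $H_g$: by the paper's definition $H_g(x,y)=(g(x)-y,x)$ has Jacobian $+1$ and transports the first coordinate into the second, producing a folded strip; your map fixes the second coordinate, has Jacobian $-1$, and is a shear along horizontal fibers --- it cannot give a horseshoe. The term you absorbed into "$+\cdots$" in the second slot is not higher order; it must carry the $X$-dependence (of order $\lambda^{kn}X$ in your scaling), so that after rescaling the new $\tilde Y$ equals $\tilde X$ and the limit is $(\tilde X,\tilde Y)\mapsto(g(\tilde X)-\tilde Y,\tilde X)$ up to affine conjugacy.

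A second, smaller gap: the passage from the unit-leading-coefficient degree-$k$ polynomial that the renormalization naturally produces (as in \cite{GST07}) to the family $\B_k(M)$ of degree-$(\le k-1)$ polynomials with all coefficients ranging freely over $[-M,M]$ is precisely what distinguishes Theorem~\ref{Lemma6GST bis} from the cited lemma, and you dispense with it in a half sentence ("the leading coefficient $a_k$ normalized to any chosen value by a preliminary rescaling of $Y$"). What is actually required is the explicit $\Lambda$-dilation of phase space, which simultaneously sends the leading coefficient to $0$ and rescales the remaining coefficients so that, as the GST parameter ranges over $\I^k$, they sweep out $[-M,M]^k$.
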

\begin{proof} This statement is almost the same as Lemma 6 in \cite{GST07}, the only difference is that we are dealing with polynomials which do not need to be unitary. Nevertheless, using a rescaling of the phase space by a large factor  $\Lambda>0$, $(x,y)=(\Lambda^{-1} X,\Lambda^{-1} Y)$, the theorem can be immediately deduced from this lemma with the following setting:
	\begin{lemma}[Lemma 6 \cite{GST07}] \label{Lemma6GST}  Under the assumptions of
	\cref{Lemma6GST bis},  for every $M,\Lambda,\epsilon >0$, there are:
\begin{itemize}
\item  an embedding $Q: \I^k\hookrightarrow [-\epsilon, \epsilon]^k$ and  an integer $N>0$,
\item   a smooth family $(C_p)_{p\in \B_k(M)}$ of charts $C_p: (-\Lambda^{-1} M,\Lambda^{-1} M)^2\hookrightarrow S $ of constant determinant onto a small open set nearby the tangency  point,
\end{itemize}
 such that  $\cal R f_p:=  C_p^{-1} \circ f^N_{Q(p)}\circ C_p$
  forms a family   $( \cal R f_p)_{p\in \I^k}$ which is $\epsilon$-$C^\infty$-close to \[(H_{g_p}|_{(-\Lambda^{-1} M,\Lambda^{-1} M)^2} )_{p\in  \I^k}\] with $g_p (x)\mapsto  \sum_{i=0}^{k-1} M\cdot\Lambda^{i-1}\cdot  p_i \cdot x^{i} +x^k$ and $p=(p_0,\dots, p_{k-1})$.
	\end{lemma}

	\end{proof}
\subsubsection{Proof of \cref{TGST}}

The final result of this theory is due to	Turaev. It says that renormalizations of H\'enon maps are dense in $\Diff^\infty_{\Leb+}(\D, \R^2)$ and likewise for the families.
	\begin{theorem}[\cite{Tu03}, Lemma 9 \cite{GST07}] \label{Lemma9GST}
		For every   $J\ge 0$ and  every non-empty open subset $\cal O_J\subset  \Diff^\infty_{\Leb+}(\D, \R^2)_{\I^J}$,    there exist $k>0$, a $C^\infty_{\Leb+} $-family of polynomials $(g_p)_{p\in \I^J}$ of  degree 	$< k$, 	an iteration $N\ge 1$ and an affine change of coordinates  $\phi$ of  $\R^2$   such that $(\phi^{-1}\circ H_{g_p}^N\circ \phi|_\D )_{p\in \I^J}$ is in $\cal O_J$, where $H_{g_p}$ is   the H\' enon-like map		$(x, y)\mapsto (g_p(x)-y, x)$.
	\end{theorem}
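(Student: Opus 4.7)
The plan is to carry out Turaev's strategy in three successive reductions, preserving parameter-smoothness in $p\in\I^J$ throughout by working with parameter-dependent data at every stage.

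First, I would reduce the problem to approximating near-identity families. The set of symplectic embeddings $\D\hookrightarrow \R^2$ is path-connected, so for any target family $(f_p)_{p\in\I^J}$ one constructs a smooth isotopy $(f_{p,t})_{t\in[0,1]}$ from the inclusion ($t=0$) to $f_p$ ($t=1$), depending smoothly on $p$. Discretizing in $t$ and writing $f_p = h_{m,p}\circ \cdots \circ h_{1,p}$ with each $h_{i,p}$ arbitrarily $C^\infty$-close to $\id$, it suffices to realize a near-identity family as a rescaled iterate of a H\'enon map: compositions of such rescaled iterates can then be absorbed into a single higher-degree H\'enon iterate through further affine conjugation.

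Second, I would approximate each near-identity family by a composition of elementary \emph{polynomial kicks}. A family $C^\infty$-close to $\id$ admits a parameter-dependent generating function close to $xY$, equivalently it is the time-$1$ map of a smooth family of Hamiltonians $H_{t,p}$. By a Strang/Trotter-type splitting, this time-$1$ map is $C^\infty$-approximated by an alternating product
\[
K_{M,p}^{v}\circ K_{M,p}^{u}\circ \cdots \circ K_{1,p}^{v}\circ K_{1,p}^{u},
\]
with $K_{j,p}^{u}(x,y)=(x+U_{j,p}'(y),y)$ and $K_{j,p}^{v}(x,y)=(x,y-V_{j,p}'(x))$. A further polynomial approximation of each $U_{j,p}, V_{j,p}$ in the spatial variable (with joint smoothness in $p$ preserved by mollification) reduces the task to approximating compositions of polynomial shears.

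Third, and this is the technical heart of the argument, I would realize the polynomial-shear composition as a rescaled iterate of a single H\'enon map. The structural identity
\[
H_g(x,y)=(g(x)-y,\,x)= R\circ T_g(x,y),\qquad T_g(x,y)=(x,\,y-g(x)),\quad R(x,y)=(-y,\,x),
\]
exhibits $H_g$ as a polynomial shear composed with a $90^\circ$ rotation-like involution. Rescaling by a factor $\Lambda$ tied to the leading coefficient of $g$ and localizing to a bounded window in the $(\Lambda X,\Lambda Y)$-coordinates, one shows that $H_g^{2N}$ encodes, in that window, a composition of $N$ prescribed polynomial shears whose coefficients depend affinely on the lower-order coefficients of $g$. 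Matching these coefficients to the shears obtained in Step 2 determines $g=g_p$ of degree $<k$, the iteration $N$, and the affine chart $\phi$, smoothly in $p$, so that $(\phi^{-1}\circ H_{g_p}^N\circ \phi|_\D)_{p\in\I^J}$ lies in $\cal O_J$. The principal obstacle lies precisely here: proving that an iterate $H_g^N$, under a well-chosen affine rescaling, \emph{exactly} realizes a prescribed polynomial-shear composition with coefficients depending smoothly on the multi-parameter. This is the content of Turaev's rescaling analysis; its delicate part is to simultaneously identify the correct window inside $H_g^N$ where the rescaled dynamics becomes polynomially shear-like, and to propagate parameter-smoothness through every affine change of coordinates, so that the final family indeed belongs to $\cal O_J$.
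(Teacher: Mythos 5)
The paper does not prove this statement: it is cited as a black box --- Turaev's density theorem from \cite{Tu03}, appearing as Lemma~9 of \cite{GST07} --- so there is no in-text proof to compare against. Evaluating your sketch on its own, it does capture the high-level shape of Turaev's argument: reduce to near-identity maps, approximate by alternating polynomial shears via generating functions and a Trotter-type splitting, and realize a polynomial-shear string as a rescaled iterate of a single H\'enon map by choosing a suitable affine window.

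There is, however, a genuine gap in Step~1, and it is the logical load-bearing part of your reduction. The clause ``compositions of such rescaled iterates can then be absorbed into a single higher-degree H\'enon iterate through further affine conjugation'' is unjustified and, as phrased, circular. If $h_2\approx\psi_2^{-1} H_{g_2}^{N_2}\psi_2$ and $h_1\approx\psi_1^{-1}H_{g_1}^{N_1}\psi_1$, then
\[
h_2\circ h_1\ \approx\ \psi_2^{-1}\, H_{g_2}^{N_2}\,\bigl(\psi_2\psi_1^{-1}\bigr)\, H_{g_1}^{N_1}\,\psi_1,
\]
and the intermediate affine transition $\psi_2\psi_1^{-1}$ is generically not a H\'enon step, so there is no obvious conjugation collapsing the right-hand side into a single $\phi^{-1}H_g^{N}\phi$. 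Proving such an absorption lemma from scratch requires precisely the rescaling/window analysis you defer to Step~3; and if you already had the theorem, the absorption would be trivial but useless for a proof. Turaev's proof instead decomposes the \emph{whole} target map into one long alternating string of polynomial shears and then performs the rescaling analysis \emph{once}, encoding the entire string into a single H\'enon iterate via one affine chart. In other words, drop the absorption clause, apply Steps~2--3 directly to the full composition, and accept that Step~3 --- which you correctly flag as the technical heart --- is where all the work actually lies and remains unexecuted in your proposal. One more point to track: in the statement, the affine chart $\phi$ and the iterate $N$ are chosen \emph{independently} of $p\in\I^J$, only $(g_p)_p$ carries the parameter; this uniformity follows from compactness of $\I^J$ but must be made explicit when propagating the rescaling through the family.
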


We are now ready for:
\begin{proof}[Proof of \cref{TGST}]
Let $\mathscr E$ be a Baire subset of $\Gamma^\infty_\leb (\R^3)$ which is GST wild. By \cref{para universal are universal}, it suffices to show that $\mathscr E$ is para-universal. Hence it suffices to prove that for any $J\ge 0$, and any non-empty subsets $\mathscr U\subset \mathscr E$ and $\mathscr O_J \subset \Diff^\infty_\Leb(\D, \R^2)_{\I^J}$, there exists a smooth $J$-parameter family in $\mathscr U$ which has a renormalization in $\mathscr O_J$.
By \cref{Lemma9GST}, there exist a $k\ge 0$ and a smooth $J$-parameter family of conservative H\'enon maps $(H_{g_p})_{p\in \I^J}$ which displays a renormalization in $\mathscr O_J$.
By \cref{Thmlem5GST07} and the definition of the GST wild property, there exists a $k$-parameter family of dynamics in $\mathscr U$ which unfolds non-degenerately a homoclinic tangency of multiplicity $k$ of a saddle point. By \cref{Lemma6GST bis}, we can extract from this $k$-parameter  family a $J$-parameter subfamily which is arbitrarily close to $(H_{g_p})_{p\in \I^J}$ and so displays a second renormalization in $\mathscr O_J$.
As a renormalization of a renormalization is a renormalization, we obtained the sought result.
\end{proof}

\subsection{Geometry of wild hyperbolic sets after Newhouse and Duarte}
We recall that a \emph{hyperbolic basic set} for a diffeomorphism is an invariant compact set $\Lambda$ which is  transitive, hyperbolic and
locally maximal (it is the maximal invariant set in one of its neighborhoods $\mathscr U$).
This generalizes the notion of saddle periodic orbits, which are finite sets, to compact sets.
 All the points $z\in \Lambda$ have local stable and unstable manifolds, which are injectively  immersed submanifolds  depending continuously on $z$ in the $C^\infty$-topology.  The stable and unstable manifolds of the orbit of $z$ are the union of stable and unstable manifolds of the iterate of $z$; these are also  injectively immersed submanifolds.

 By Anosov's Theorem, hyperbolic basic sets are structurally stable: for every smooth (actually $C^1$ suffices) perturbation of the dynamics, the maximal invariant set $\tilde \Lambda$  in $\mathscr U$ for the perturbed dynamics is a hyperbolic basic set, and the dynamics on $\tilde \Lambda$  is \emph{orbit equivalent} to the one on $\Lambda$. There is a homeomorphism $h: \Lambda\to \tilde \Lambda$ close to the identity which sends each orbit in  $\Lambda$ to an orbit in $\tilde \Lambda$ called its \emph{hyperbolic continuation}.

We say that  two hyperbolic basic sets $\Lambda_1$ and $\Lambda_2$ are \emph{homoclinically related} if there are $z_1\in \Lambda_1$ and $z_2\in \Lambda_2$ such that the stable manifold of $z_i$ intersects transversally the unstable manifold of $z_j$ for $1\le i\neq j\le 2$.

We will use many times the following generalization of the classical inclination Lemma:
\begin{lemma}[Para-inclination lemma  {\cite[Lemma 1.7]{Berger2016}}]\label{para-inclination lemma}
	Let $r\geq 1$ and let $U\Subset \R^m$. Suppose that $(f_p)_p$ is a smooth family of diffeomorphisms $f_p$ of $U$ which leaves invariant a basic hyperbolic set $\Lambda_p$. Let $z\in\Lambda_0$ and $(D_p)_p$ be a smooth family of $C^r$ manifolds of the same dimension as $W^u_{loc}(z;f_p)$ and such that $(D_p)_p$ intersects transversally $(W^s_{loc}(z;f_p))_p$ at a smooth curve of points $(y_p)_p$. Then, for every $\epsilon>0$ and for every $n$ large, the $\epsilon$-neighborhood of $f^n_p(y_p)$ in $f_p^n(D_p)$ is a submanifold $D^n_p$ that is $C^\infty$-close to $W^u_{\epsilon}(f^n_p(z);f_p)$ and $(D^n_p)_p$ is $C^\infty$-close to $(W^u_{loc}(f^n_p(z);f_p))_p$.
\end{lemma}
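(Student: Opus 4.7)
The statement is a parametric refinement of the classical Palis inclination lemma ($\lambda$-lemma), and the natural plan is to revisit the classical proof (graph transform near a hyperbolic orbit) while keeping careful track of smooth dependence on the parameter $p$. The key fact we will exploit is that, although the hyperbolic continuation $h_p:\Lambda_0\to\Lambda_p$ is only H\"older in general, for each \emph{individual} point $z\in\Lambda_0$ and each fixed $n\ge 0$ the local manifolds $W^u_{loc}(f^n_p(z);f_p)$ and $W^s_{loc}(f^n_p(z);f_p)$ depend $C^\infty$-smoothly on $p$ (they are obtained as fixed points of a parameter-smooth graph transform on an invariant cone field). Using this, we can set up smoothly varying adapted coordinates along the orbit segment $\{f^n_p(z)\}_{n\ge 0}$.

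First, I would choose, for each $n\ge 0$, a smooth family of charts $\phi_{n,p}$ on a neighborhood of $f^n_p(z)$ that sends this point to the origin and identifies $W^u_{loc}(f^n_p(z);f_p)$ and $W^s_{loc}(f^n_p(z);f_p)$ with fixed linear subspaces $E^u$ and $E^s$. In such charts, $f_p$ reads $(u,s)\mapsto (A_{n,p}\,u+\rho^u_{n,p}(u,s),\,B_{n,p}\,s+\rho^s_{n,p}(u,s))$, where the $A_{n,p}$ are expanding and the $B_{n,p}$ contracting (in adapted norms), and the remainders $\rho^{u,s}_{n,p}$ vanish to first order at the origin. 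Standard bunching/adapted-metric arguments on the basic set give uniform expansion/contraction rates $\lambda<1<\mu$ valid along the entire orbit, independently of $n$, and with smooth dependence on $p$.

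Next, by the transversality assumption $D_p\pitchfork W^s_{loc}(z;f_p)$ together with the dimension hypothesis $\dim D_p=\dim W^u_{loc}(z;f_p)$, the intersection reduces to the single smooth curve $(y_p)_p$, and a neighborhood of $y_p$ in $D_p$ is, after applying $\phi_{0,p}$, the graph of a $C^r$ map $\sigma_p:U_p\subset E^u\to E^s$ depending smoothly on $p$ with $\sigma_p(0)$ small and $D\sigma_p(0)$ uniformly bounded. One then iterates the standard graph transform: if $D_p^n$ is, in the chart $\phi_{n,p}$, the graph of $\sigma_{n,p}$, then near $f^n_p(y_p)$ the image $f_p(D_p^n)$ is the graph of a new function $\sigma_{n+1,p}$ on a disk of radius multiplied by a factor $\sim\mu$, whose $C^r$-norm (and parametric $C^r$-norm in $p$) contracts by a factor $O(\lambda\mu^{-1})<1$ plus lower-order error terms from the $\rho^{u,s}_{n,p}$. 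Iterating, the $\sigma_{n,p}$ converge, uniformly in $p$ and in the $C^r$ sense, to the zero section, which is precisely $W^u_{loc}(f^n_p(z);f_p)$.

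The principal obstacle is, as always for $\lambda$-lemmas in the non-periodic setting, to obtain estimates on the graph transform that are uniform along a non-periodic orbit in $\Lambda_p$; this is handled via Pesin/bunched adapted coordinates as above, and the standard contraction argument of the graph transform does the rest. The only genuinely new ingredient compared to the classical case is bookkeeping of the $p$-dependence: one must check that all chart changes, the linear parts $A_{n,p},B_{n,p}$, and the remainders $\rho^{u,s}_{n,p}$ depend $C^\infty$ on $p$, that the contraction of the graph transform holds in the jointly smooth $C^r$-norm on $(p,x)$, and finally that the convergence rate is uniform in $p$ on a neighborhood of $p=0$. All of these follow from the smooth parameter dependence of local stable/unstable manifolds of a single point in a structurally stable hyperbolic basic set, which is the genuine input and is established for instance in \cite{Berger2016}.
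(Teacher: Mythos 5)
The paper does not give a proof of this lemma: it is stated as a black-box citation of~\cite[Lemma 1.7]{Berger2016}, so there is no internal argument to compare your sketch against. Your outline is nevertheless a reasonable rendering of the standard parametric graph-transform proof, and you correctly isolate the genuine subtlety — that although the conjugacy $h_p:\Lambda_0\to\Lambda_p$ is only H\"older in the spatial variable, the continuation of a \emph{fixed} point $z$ and its local invariant manifolds depend smoothly on $p$ (via the implicit function theorem applied to the shadowing/graph-transform fixed-point problem), which is exactly the input~\cite{Berger2016} provides.

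Two small corrections. First, the appeal to "Pesin/bunched adapted coordinates" is out of place: $\Lambda_p$ is a uniformly hyperbolic basic set, so a Mather adapted metric already gives uniform rates $\lambda<1<\mu$ along every orbit, with smooth $p$-dependence; Pesin theory (nonuniform hyperbolicity) and bunching (regularity of invariant foliations) are not needed and invoking them obscures the argument. Second, watch the regularity bookkeeping: the lemma as restated here takes $D_p$ merely $C^r$ but concludes $C^\infty$-closeness, which your graph-transform contraction cannot deliver from $C^r$ data; either the conclusion should be $C^r$-closeness (as in the source) or one should note that in all applications within this paper the disks $D_p$ are $C^\infty$, in which case the same contraction estimate runs in every $C^k$-norm simultaneously and the stated $C^\infty$-closeness follows.
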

\cn

%

The concept of homoclinic tangency can be generalized to hyperbolic basic sets. We say that a hyperbolic basic set $\Lambda$ displays a \emph{homoclinic tangency} if there is an orbit $O\subset  \Lambda$ whose stable manifold   is tangent to its unstable manifold.

Newhouse discovered the first mechanism that yields \emph{wild}  hyperbolic basic sets:
\begin{definition}[Robust quadratic tangency]\label{def wild}
Let $f$ be a surface $C^2$-diffeomorphism leaving invariant a hyperbolic basic set $\Lambda$.
We say that $\Lambda$ is \emph{wild} if it  displays a \emph{robust quadratic homoclinic tangency}: there are continuous families $(W^s_{loc} (z; f))_{z\in \Lambda}$ and $(W^u_{loc} (z; f))_{z\in \Lambda}$  of local stable and unstable manifolds and a $C^2$-open neighborhood $  {\cal N}$  of $f$ satisfying the following property.
	
	For every $\tilde f\in {\cal N}$, the hyperbolic continuation $  \tilde \Lambda$ of $\Lambda$ is well defined and there exists a point $z\in \Lambda$ and an iteration $n\ge 0$ such that the continuation of the local stable and unstable manifolds
	$W^s_{loc} (z; \tilde f)$ and $W^u_{loc} (\tilde f^n(z); \tilde f)$ display a quadratic homoclinic tangency.

Let $(f_p)_{p\in \I}$ be a family containing $f=f_0$. We say that this family \emph{unfolds non-degenerately the homoclinic tangencies of $\Lambda$}, if it unfolds non-degenerately  each
tangency  between $W^s_{loc} (z; f_q)$ and $W^u_{loc} (z'; f_q)$, among $z,z'\in \Lambda$ and $q\in \I$ small.
\end{definition}

\begin{proposition} \label{robust implies homo}
Let $(f_p)_{p\in \I}$ be a family unfolding non-degenerately the quadratic  homoclinic  tangencies of a wild basic hyperbolic set $\Lambda$ at $p=0$. Let $P\in \Lambda$ be a saddle periodic point. Then there exists an arbitrarily small parameter at which the orbit of $P$ displays a  quadratic  homoclinic  tangency which unfolds non-degenerately.
\end{proposition}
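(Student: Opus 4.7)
The plan is to transfer the quadratic tangency supplied by the wildness of $\Lambda$ from an arbitrary pair of points in $\Lambda$ to the specific saddle orbit of $P$, using the parametric inclination Lemma~\ref{para-inclination lemma}. By wildness applied to $f_0$, there exist $z\in\Lambda$ and $n\ge 0$ such that, setting $z':=f_0^n(z)$, the manifolds $W^s_{loc}(z;f_0)$ and $W^u_{loc}(z';f_0)$ have a quadratic tangency at some point $Q$. Since by assumption $(f_p)_{p\in\I}$ unfolds non-degenerately the tangencies of $\Lambda$, the relative position $\mu(p)$ between the continuations of $W^s_{loc}(z;f_p)$ and $W^u_{loc}(z';f_p)$ at the continuation of $Q$ satisfies $\mu(0)=0$ and $\mu'(0)\ne 0$.

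Next I would connect $P$ to $(z,z')$ by transverse heteroclinic intersections. Since $\Lambda$ is a transitive hyperbolic basic set, any two of its orbits are homoclinically related, so there exist transverse intersection points $\tilde y\in W^u(P;f_0)\pitchfork W^s_{loc}(z';f_0)$ and $\tilde y'\in W^s(P;f_0)\pitchfork W^u_{loc}(z;f_0)$, which persist as smooth families $(\tilde y_p)_p,(\tilde y'_p)_p$ by structural stability. By topological recurrence of $z$ and $z'$ in $\Lambda$, I then pick arbitrarily large integers $m,m'$ such that $f_0^m(z')$ is close to $z'$ and $f_0^{-m'}(z)$ is close to $z$. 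Applied to small families of disks $D_p\subset W^u(P;f_p)$ through $\tilde y_p$ and $E_p\subset W^s(P;f_p)$ through $\tilde y'_p$, the para-inclination Lemma~\ref{para-inclination lemma} (and its analogue for $f^{-1}$) produces disks $D^m_p\subset W^u(P;f_p)$ and $\tilde E^{m'}_p\subset W^s(P;f_p)$ which are $C^\infty$-close, as families in $p$, to $W^u_\epsilon(f_p^m(z'_p);f_p)$ and $W^s_\epsilon(f_p^{-m'}(z_p);f_p)$ respectively; by the choice of $m,m'$ and continuity of the local invariant manifolds over $\Lambda$, these disks are also $C^\infty$-close to $W^u_\epsilon(z'_p;f_p)$ and $W^s_\epsilon(z_p;f_p)$, and in particular pass near $Q$.

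To conclude, I would define $\mu_{m,m'}(p)$ as the analogue of $\mu(p)$ measuring the relative position of $D^m_p$ and $\tilde E^{m'}_p$ at a point near $Q$. The previous $C^\infty$-closeness gives that $\mu_{m,m'}$ is $C^2$-close to $\mu$ on a fixed neighborhood of $0$, so that $\mu_{m,m'}(0)$ is small and $\mu'_{m,m'}(0)\ne 0$; the implicit function theorem then provides an arbitrarily small parameter $p_1$ with $\mu_{m,m'}(p_1)=0$ and $\mu'_{m,m'}(p_1)\ne 0$. At $p=p_1$ the manifolds $W^s(P;f_{p_1})$ and $W^u(P;f_{p_1})$ are tangent near $Q$, and this tangency unfolds non-degenerately; since the second jets of $D^m_{p_1}$ and $\tilde E^{m'}_{p_1}$ at the tangent point are $C^\infty$-close to the unequal second jets of $W^u_{loc}(z';f_0)$ and $W^s_{loc}(z;f_0)$ at $Q$, the tangency is quadratic.

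The main obstacle is the simultaneous bookkeeping of the order of contact across the two successive inclination steps (one for $W^u$, one for $W^s$): the para-inclination Lemma must transfer the zeroth-order information (vanishing of $\mu_{m,m'}$), the first-order information (non-degenerate unfolding), and the second-order information (quadratic nature) from the original tangency at $Q$ to the tangency at $P$. Handling this requires carefully coordinating the choice of the disk families $D_p,E_p$, the iteration indices $m,m'$, and the recurrence of $z,z'$ in $\Lambda$, together with the continuity of the 2-jet of the local invariant manifolds along $\Lambda$.
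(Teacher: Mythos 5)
Your strategy is the same as the paper's: pick the tangency pair $(z, z')$ from the wildness of $\Lambda$, use the homoclinic relation between $O_P$ and the orbit of $z$ to bring segments of $W^s(O_P)$ and $W^u(O_P)$ near the tangency locus via the para-inclination lemma, and then transfer both the quadraticity and the non-degenerate unfolding. The paper's proof is exactly this (it iterates $W^s_{loc}(O_P)$ backward from a transverse intersection with $W^u_{loc}(f_0^N(z))$, and $W^u_{loc}(O_P)$ forward from a transverse intersection with $W^s_{loc}(f_0^{n-N}(z))$, for large $N$), so there is no genuinely different route here.

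There is one small gap in the way you set up the iteration indices. You fix transverse intersection points $\tilde y\in W^u(P)\pitchfork W^s_{loc}(z')$ and $\tilde y'\in W^s(P)\pitchfork W^u_{loc}(z)$, iterate $D_p$ forward $m$ times and $E_p$ backward $m'$ times, and then invoke ``topological recurrence of $z$ and $z'$'' to bring $f_0^m(z')$ back near $z'$ and $f_0^{-m'}(z)$ back near $z$. But recurrence of the specific points $z, z'$ is not guaranteed: an arbitrary point of a transitive hyperbolic basic set is nonwandering, yet need not be recurrent, and the tangency point $z$ furnished by wildness is not at your disposal to choose. The paper sidesteps this by instead placing the transverse intersection at $W^u_{loc}(f_0^N(z))$ (and $W^s_{loc}(f_0^{n-N}(z))$), so that after $N$ iterations one lands exactly on $W^s_{loc}(z)$ (and $W^u_{loc}(f_0^n(z))$), with no recurrence needed. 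Your argument would be repaired by the same reindexing: choose $\tilde y\in W^u(P)\pitchfork W^s_{loc}(f_0^{-m}(z'))$ and $\tilde y'\in W^s(P)\pitchfork W^u_{loc}(f_0^{m'}(z))$ for large $m,m'$ (such intersections exist by density of $W^{u/s}(O_P)\cap\Lambda$ in $\Lambda$), so that the forward/backward iterations land exactly on $z'$ and $z$. With that fix, the rest of your argument, including the $C^2$-closeness of the relative-position function and the jet comparison establishing quadraticity, is sound and matches the paper.
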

\begin{proof}
	Let $z\in \Lambda$ and $n\ge 0$ be so that $W^s_{loc} (z;f_0)$ displays a quadratic tangency with $W^u_{loc} (f_0^n(z); f_0)$ which unfolds non-degenerately. Let $N\gg 1$.
	Let $O_P$ be the orbit of $P$. We recall that $W^s_{loc} (O_P; f_0)$ intersects transversally $W^u_{loc} (f^{N}_0(z); f_0)$ and $W^u_{loc} (O_P; f_0)$ intersects transversally $W^s_{loc} (f^{n-N}_0(z); f_0)$.
	As $N$ is large, by the inclination lemma $W^s (O_P; f_0)\supset f_0^{-N}(W^s_{loc} (O_P; f_0)) $ contains a segment which is close to  $W^s_{loc} (z; f_0)$ and  $W^u  (O_P; f_0)\supset f^N_0(W^u_{loc} (O_P; f_0))$ contains a  segment  which is close to $W^u_{loc} (f^n_0(z); f_0)$. By the para-inclination lemma \cite{Berger2016}, the hyperbolic continuations of these segments vary with $p$ $C^\infty$-close to the  hyperbolic continuations $(W^u_{loc} (f^n_p(z); f_p))_p$ and $(W^s_{loc} (z; f_p))_p$. Thus, as the unfolding is non-degenerate, there exists an arbitrarily small parameter such that a pair of segments in $W^s(O_P;f_p)$ and $W^u(O_P,f_p)$ display a quadratic tangency unfolded non-degenerately.\cn
\end{proof}

Newhouse \cite{Ne1} gave the first example of a horseshoe displaying a robust homoclinic tangency.  Then he proved  \cite{Ne2} that those sets appear at any non-degenerate unfolding of  a quadratic homoclinic tangency of a dissipative saddle point of a surface diffeomorphism.   The conservative counterpart of this result was proved by Duarte:

\begin{theorem}[Duarte {\cite[Theorem B]{Du}}]\label{Duarte thm}  Let $(f_p)_p$  be a smooth one-parameter family of symplectic maps in $\Diff^\infty_{\leb^+}(\Sigma', \Sigma)$. Let $P$ be a periodic hyperbolic orbit displaying a quadratic homoclinic tangency with orbit $Q$,   which unfolds non-degenerately at $p= 0$. Take any small neighborhood $U$ of $P\cup Q$. Then there is a sequence of parameters  $p_n $ converging to $p= 0$ for which $f_{p_n}$ leaves invariant a hyperbolic horseshoe\footnote{A hyperbolic horseshoe is a locally maximal, transitive hyperbolic Cantor set.}       $\Lambda_n\subset U$ which is  homoclinically related to the hyperbolic continuation of $P$ and   displays   a robust homoclinic tangency.   \end{theorem}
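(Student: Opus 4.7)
My plan is to follow the conservative version of Newhouse's tangency--horseshoe creation mechanism. The outcome of the unfolding of a non-degenerate quadratic homoclinic tangency at $p=0$ is, after Sternberg-type smooth linearization near $P$ and a standard renormalization of the return map in a small box around a point of $Q$, the emergence of a family of symplectic renormalized return maps $\mathcal{R}_n f_p$ depending on $n\gg 1$ and a rescaled parameter $\nu = \nu(n,p)$. These renormalized maps converge, in the $C^r$ topology for any $r$, to a model family of conservative H\'enon-like quadratic maps (the symplectic H\'enon map with parameter $\nu$). This is the classical renormalization at a homoclinic tangency, and the only point to verify is that the rescaling preserves the symplectic form up to a multiplicative constant absorbed in the chart, which is automatic because $f_p$ is area-preserving and the local linearization has constant Jacobian.

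Next I would use the well-understood bifurcation diagram of the symplectic H\'enon family. In that family one knows that, for an open interval of the rescaled parameter $\nu$, there is a basic hyperbolic horseshoe $\Lambda(\nu)$ homoclinically related to the continuation of the fixed point, together with a quadratic homoclinic tangency associated to some orbit in $\Lambda(\nu)$ that arises at some parameter $\nu^\ast$ inside this interval and unfolds non-degenerately with $\nu$. Because the renormalized family $\mathcal{R}_n f_p$ is $C^r$-close to this model, for each large enough $n$ the same picture persists for $\mathcal{R}_n f_p$: there is a parameter $p_n\to 0$ at which $f_{p_n}$ admits a hyperbolic horseshoe $\Lambda_n\subset U$ homoclinically related to the continuation of $P$, and a quadratic homoclinic tangency of $\Lambda_n$ that unfolds non-degenerately with $p$. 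The homoclinic relation with $P$ is automatic because the horseshoe is obtained by iterates of a box lying along the homoclinic orbit $Q$, and an inclination lemma (indeed the para-inclination Lemma~\ref{para-inclination lemma}) produces the required transverse intersections of $W^s(P;f_{p_n})$ and $W^u(\Lambda_n;f_{p_n})$ and vice versa.

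To upgrade the homoclinic tangency of $\Lambda_n$ into a \emph{robust} tangency one needs the conservative form of Newhouse's gap lemma. The strategy is to exhibit Markov partitions of $\Lambda_n$ whose associated stable and unstable Cantor sets $K^s_n$ and $K^u_n$ (the transverse sections of the local stable/unstable foliations) satisfy a thickness bound $\tau(K^s_n)\cdot \tau(K^u_n)>1$, so that by the Newhouse gap lemma $K^s_n$ and $K^u_n$ cannot be pulled apart by any $C^2$-small perturbation. In the conservative two-dimensional setting both factor Cantor sets must be controlled simultaneously, and Duarte's insight is that by taking $n$ very large one can refine the horseshoe (by composing with appropriate iterates) to make both thicknesses tend to infinity, hence their product large. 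This is the main obstacle of the proof, since in contrast with the dissipative case one cannot simply use the affinity coming from strong contraction in one direction and must instead perform a careful multiplicative control of both stable and unstable widths inside the Markov partition. Once this conservative thickness estimate is in hand, the gap lemma guarantees that every $C^2$-small perturbation $\tilde f$ of $f_{p_n}$ still has a pair of points in the continuation $\tilde \Lambda_n$ whose local stable and unstable manifolds intersect tangentially, i.e.\ $\Lambda_n$ is wild in the sense of Definition~\ref{def wild}.

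Combining these three steps, I obtain the sequence $p_n\to 0$ of parameters and the hyperbolic horseshoes $\Lambda_n\subset U$ with the required properties: they are homoclinically related to the continuation of $P$ and display a robust quadratic homoclinic tangency. The main technical effort is concentrated in step three, namely the conservative thickness estimate that replaces the dissipative argument of Newhouse.
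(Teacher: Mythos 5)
Your proposal follows essentially the same three-step scheme that the paper uses to sketch Duarte's proof: renormalization of the return map near the tangency into the conservative H\'enon family, construction inside that model family of a horseshoe homoclinically related to the saddle together with a non-degenerately unfolding quadratic tangency, and a thickness/gap-lemma argument making the tangency robust, all then transferred back to $f_{p_n}$ for $n$ large. The only cosmetic difference is that you present the thickness estimate as a separate third step, whereas the paper's sketch folds it into the analysis of the H\'enon family; the content is the same, and your remark that both stable and unstable thicknesses must be controlled simultaneously in the conservative setting correctly identifies the crux of Duarte's contribution.
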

We now give a sketch of the proof of Duarte's result, because it will be needed to deduce \cref{Duarte coro} below.
\begin{proof}[Sketch of proof]  
The beautiful  proof of Duarte can be divided into three main steps.
	\begin{enumerate}[$(i)$]
		\item Refering to the work of  Gonchenko and Shilnikov in \cite{GonShi97} (see also \cite{HenonGST,Henon2GST}) and of Mora and Romero in \cite{MorRom97}, a H\'enon-like renormalization procedure can be done nearby one point $Q_0$ of the homoclinic tangency orbit $Q$ at $p=0$ in the conservative setting. This will be recalled below. The scheme has as limit the \emph{conservative H\'enon family}
		\[
		(a,x,y)\mapsto H_a(x,y):=(y,-x+a-y^2)\,.
		\]
		\item In the framework of the conservative H\'enon family, Duarte showed that there exists a parameter $a^*$ close to $-1$ 
		such that the H\'enon map $H_{a^*}$ displays a wild horseshoe $\Lambda_{a^*}$, i.e., a hyperbolic basic set with robust homoclinic tangency that unfolds non-degenerately.
		\item These first two steps are used to construct a sequence of parameters $(p_n)_n$ converging to $p=0$ 
		 such that, for every $n$, 
		 the diffeomorphism $f_{p_n}$ leaves invariant a wild horseshoe $\Lambda_n$. 
		 In \cite{Du99}, it is proved that this sequence $(\Lambda_n)_n$ converges to $P\cup Q$ in the Hausdorff metric and that each $\Lambda_n$ is homoclinically related to the hyperbolic continuation of $P$.
	\end{enumerate}
	
	We now give more details and references on each step of Duarte's proof.
	
	Concerning the renormalization procedure, for every large $n\geq 0$ we can consider a small box $I_n\times B_n$ in $\mathbb{I}\times\Sigma'$ near the point $(0,Q)$, which shrinks to the point itself as $n\to+\infty$ and so that the segments $(I_n)_n$ are disjoint. This small box is chosen so that it is sent sufficiently close to itself by the function $(p,X)\mapsto (p, f_p^n(X))$.
	In each small box we rescale the coordinates by a map
	\[
[-30, 30]\times 	[-4,4]^2\ni (a,x,y)\mapsto (\phi_n(a),\psi_{n,a}(x,y))\in I_n\times B_n
	\]
which enables to see  $f_{\phi_n(a)}^n$ as a map  $\mathcal{R}_nf_a:=\psi^{-1}_{n,a}\circ f_{\phi_n(a)}^n\circ\psi_{n,a}$ which is $C^\infty$ -close to each H\'enon map $H_a:(x,y)\mapsto (y,-x+a-y^2)$  when $n$ is large. In particular, the presence of a $C^2$-stable wild horseshoe for the H\'enon map $H_{a^*}$ for the parameter $a^*$ (that will be proved in the second step) will persist in the renormalized dynamics at the unfolding of the homoclinic tangency, i.e., for $f_{p_n}$ with $p_n:=\phi_n(a^*)$, for $n$ large.\\ 

In the second step we study the conservative H\'enon family of maps $(x,y)\mapsto H_a(x,y)=(y,-x+a-y^2)$. At $a=-1$, the map displays a parabolic fixed point which breaks, for $a>-1$, into a saddle fixed point $S$ and an elliptic fixed point $E$. For values of $a>-1$ close to $-1$, the point $S$ exhibits a transverse homoclinic intersection $\Omega$. The asymptotics of the homoclinic angle at $\Omega$ is given in \cite{GelSau}. All the points $S,E,\Omega$ belong to the axis of symmetry $\{x=y\}$. Moreover, the saddle point $S$ is accumulated by a sequence $(Q_n)_n$ of periodic points of period $2n$. 
	For each $n$, we can define $R_n$ as the rectangle bounded by the local stable and unstable manifolds of $S$ and $Q_n$. This is the starting point for Duarte's construction \cite[Pages 7-8]{Du} of a (positive binary) horsehoe, depicted in \cref{figDuarte}. More precisely, let $R_n^0\subset R_n$ be the rectangle formed by points whose first iteration remains in $R_n$, while $R_n^1\subset R_n$ is the rectangle of points which come back to $R_n$ after $2n$ iterations. Denoting by $T_n:R_n^0\cup R_n^1\to R_n$ the first return map to\footnote{More precisely, the map $T_n$ associates to a point in $R_n^0$, resp. in $R_n^1$, its first iterate, resp. its $2n$-th iterate.} $R_n$, our candidate horseshoe is then the maximal invariant set $\bigcap_{k\in\Z} T_n^k(R_n^0\cup R_n^1)$.
	\begin{figure}
		\centering
		\includegraphics[width = 13cm]{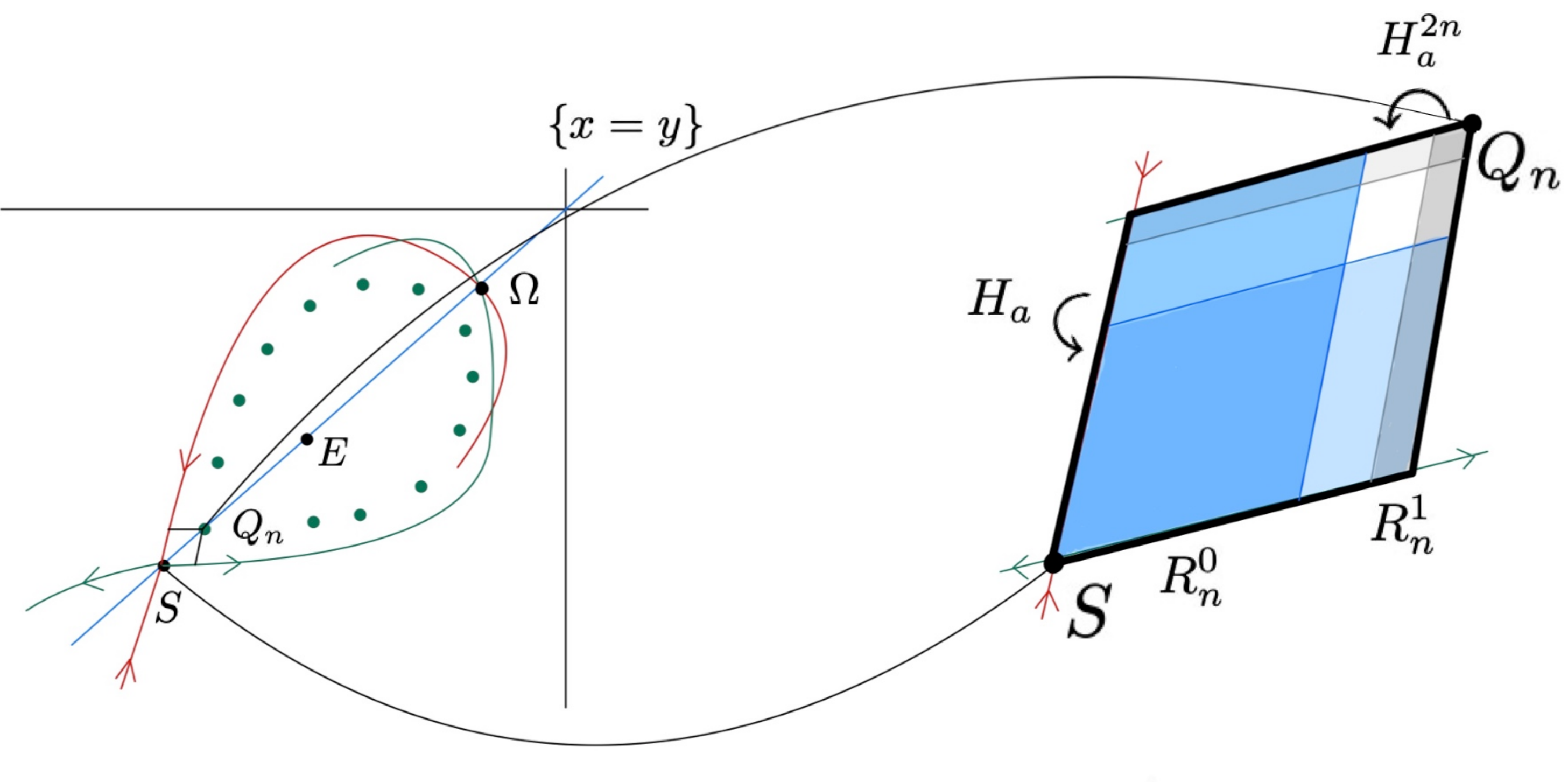}
		\caption{Construction of the horseshoe for the H\'enon map.
 }
		\label{figDuarte}
	\end{figure}
	The study of the structure of the aforementioned horseshoe (its thickness) 
	enables him to show that any eventual homoclinic tangency of the horseshoe will persist under perturbation, i.e., the horseshoe will be wild. See \cite[Section 5]{Du} for the proof. The last remaining key point of Duarte's proof is then showing the existence of quadratic homoclinic tangencies. 
	Duarte analyses the stable and unstable manifolds of $S$ (see \cite[Section 6]{Du})
	and shows that there is $a^*$ close to $-1$ for which the saddle point $S$ displays a quadratic homoclinic tangency that, moreover, unfolds non-degenerately when $a$ varies, see \cite[Lemma B]{Du}.\\
	
	In the second step Duarte proved the existence of a wild horseshoe $\check\Lambda$ left invariant by the H\'enon map $H_{a^*}$. As $\mathcal{R}_nf_{a^*}$ is $C^2$-close to $H_{a^*}$ when $n$ is large, the continuation of $\check\Lambda$ remains wild for the renormalization $\mathcal{R}_nf_{a^*}$ of $f_{p_n}$. Thus the $f_{p_n}$-orbit $\Lambda_n$ of $\psi_{n,a^*}(\check\Lambda)$ is a wild horseshoe for $f_{p_n}$. 
	In \cite[Section 6]{Du99}, Duarte detailed the proof that, for any neighborhood $U$ of $P\cup Q$, the horseshoe $\Lambda_n$ is contained in $U$ for $n$ sufficiently large. Moreover, in \cite[Lemma 7.1]{Du99}, he showed that each horseshoe $\Lambda_n$ is homoclinically related to the hyperbolic continuation of $P$. 
\end{proof}

From items $(i)$ and $(ii)$ of Duarte's proof, we have the following corollary:
\begin{coro}\label{Duarte coro}
Under the assumption of \cref{Duarte thm}, the unfolding of $\Lambda_n$ in $(f_p)_p$ at $p=p_0$ is non-degenerate. Moreover, for every point $Q_0$ in the homoclinic orbit $Q$ and for every neighborhood $U_0$ of $Q_0$, for large enough $n$ the hyperbolic set $\Lambda_n$ satisfies the following property:
 every $z\in \Lambda_n$ has an iterate in $U_0$.
 \end{coro}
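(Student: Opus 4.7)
The plan is to extract both assertions directly from the renormalization scheme described in steps $(i)$ and $(ii)$ of the sketch above, exploiting two facts: the wild horseshoe $\check\Lambda$ produced for the H\'enon family $(H_a)_a$ at $a=a^*$ is \emph{itself} non-degenerately unfolded as $a$ varies, and the rescalings $\psi_{n,a}$ and $\phi_n$ entering the renormalization are invertible smooth transformations.

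For the non-degeneracy assertion, note that Duarte's Lemma B, invoked in step $(ii)$, gives more than the mere existence of a quadratic homoclinic tangency for $H_{a^*}$: it shows that the relative position of the stable and unstable manifolds at that tangency has non-zero derivative in $a$. Combined with the para-inclination \cref{para-inclination lemma} and structural stability of the horseshoe, this implies that the family $(H_a)_a$ unfolds non-degenerately \emph{all} quadratic homoclinic tangencies of the hyperbolic continuation of $\check\Lambda$, in the sense of \cref{def wild}. Since
\[
\mathcal R_n f_a = \psi_{n,a}^{-1}\circ f_{\phi_n(a)}^n\circ \psi_{n,a}
\]
is $C^\infty$-close to $(H_a)_a$ as an $a$-parameter family when $n$ is large, and since non-degenerate unfolding of quadratic homoclinic tangencies of a wild basic set is a $C^2$-open property of one-parameter families, the family $a\mapsto \mathcal R_n f_a$ inherits this non-degenerate unfolding at $a=a^*$. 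Undoing the conjugation by the family of charts $\psi_{n,a}$ of constant Jacobian, and reparametrizing by the diffeomorphism $\phi_n$, both preserve the non-vanishing of the derivative in the parameter of the relative position at each tangency point. This yields the non-degenerate unfolding of $\Lambda_n$ in $(f_p)_p$ at $p_n=\phi_n(a^*)$.

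For the iterate-visiting property, recall from step $(i)$ that the renormalization is performed in a small box $I_n\times B_n\subset \mathbb I\times \Sigma'$ centered at $(0,Q_0)$, whose diameter tends to zero as $n\to \infty$, and that $\psi_{n,a^*}([-4,4]^2)\subset B_n$. Since $\check\Lambda\subset [-4,4]^2$, and $\Lambda_n$ is by construction the $f_{p_n}$-orbit of $\psi_{n,a^*}(\check\Lambda)$, every $z\in \Lambda_n$ admits an integer $m$ with $f_{p_n}^{-m}(z)\in \psi_{n,a^*}(\check\Lambda)\subset B_n$. For $n$ large enough, $B_n\subset U_0$, which proves the claim when $Q_0$ is the center of the renormalization. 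For a general $Q_0'\in Q$, write $Q_0'=f_0^k(Q_0)$; by continuity of $(p,x)\mapsto f_p^k(x)$ together with the shrinking of $B_n$, one has $f_{p_n}^k(B_n)\subset U_0'$ for any prescribed neighborhood $U_0'$ of $Q_0'$ once $n$ is large, and hence $f_{p_n}^{k-m}(z)\in U_0'$.

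The main obstacle is the first step: verifying rigorously that non-degeneracy survives the renormalization. Non-degeneracy is an assertion about the first derivative with respect to the parameter of the relative position function at a homoclinic tangency, so one must check that neither the conjugation by $\psi_{n,a}$ (which contributes a smoothly $a$-dependent rescaling factor) nor the reparametrization $\phi_n$ (a diffeomorphism $[-30,30]\to I_n$) can annihilate this derivative. In practice this reduces to the explicit structure of $\psi_{n,a}$ and $\phi_n$ coming from the H\'enon-like renormalization procedure recalled in step $(i)$, which has been engineered precisely so that the $a$-dependence of $\mathcal R_n f_a$ converges to the linear $a$-dependence of $H_a$, and hence passes non-degeneracy from the H\'enon limit back to the original family $(f_p)_p$ at $p=p_n$.
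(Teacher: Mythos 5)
Your proposal is correct and follows essentially the same route as the paper, which simply asserts that the corollary follows from items $(i)$ and $(ii)$ of the sketch of Duarte's proof without spelling out the details. You correctly identify the two ingredients —  Duarte's Lemma B showing that the Hénon wild horseshoe's tangencies unfold non-degenerately (so the $C^\infty$-closeness of $\mathcal R_n f_a$ to $H_a$ transfers this to $f_{p_n}$ via conjugation by the constant-Jacobian charts $\psi_{n,a}$ and reparametrization $\phi_n$), and the shrinking of the renormalization box $B_n$ to the reference point of $Q$ combined with the fact that $\Lambda_n$ is the $f_{p_n}$-orbit of $\psi_{n,a^*}(\check\Lambda)\subset B_n$, with a continuity argument to reach any other point of the homoclinic orbit.
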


\section{Proof of \cref{ThmA}}\label{proof thm A}
To prove \cref{ThmA}, we are going to show the following stronger result:
\newtheorem*{thm:hom:tg}{\emph{Theorem \ref{thm hom tg}}}
\begin{thm:hom:tg}
For any  $k\geq 0$, there exists a family $(X_p)_{p\in\mathbb{I}^k}$ of Beltrami fields with $X_p\in\mathscr{B}(\R^3)$ that unfolds non-degenerately a homoclinic tangency of multiplicity $k$ at $p=0$.
\end{thm:hom:tg}

Indeed, we can easily show that \cref{thm hom tg} implies \cref{ThmA} using Duarte's Theorem:
\begin{proof}[Proof of \cref{ThmA}]

By \cref{thm hom tg}, there exists  a family $(X_p)_{p\in\mathbb{I}}$ of Beltrami fields with $X_p\in\mathscr{B}(\R^3)$ that unfolds non-degenerately a quadratic homoclinic tangency at $p=0$.
 Consider a Poincar\'e section for $(X_{p})_{p}$ around $p=0$ such that the associated family of return maps
$(f_{p})_{p\in \I}$ is a family of symplectic surface diffeomorphisms  unfolding non-degenerately a homoclinic tangency for a saddle  point $P$.
By   Duarte's theorem \cite[Theorem B]{Du} (here \cref{Duarte thm}),   at some $p_0$ small, the point $P$ belongs to a wild hyperbolic horseshoe which is non-degenerately unfolded.

Then every $\tilde X$ in a neighborhood $\cal V\subset  \mathscr B(\R^3)$ of $X_{p_0}$ induces a Poincar\' e map $\tilde f_{p_0}$ which displays a wild horseshoe.  Furthermore, if the neighborhood $\cal V$ is small enough, then the family $(\tilde X-X_{p_0}+X_p)_p$ is sufficiently close to $(X_p)_p$ to unfold non-degenerately  the wild hyperbolic horseshoe  at $p_0$.

Then by \cref{robust implies homo}, there is $p_1$ arbitrarily close to $p_0$ so that
$(\tilde X-X_{p_0}+X_p)_p$  unfolds non-degenerately a quadratic homoclinic tangency at $p=p_1$. Hence $\tilde X$  belongs to
the closure $H_\mathscr{B}(\R^3)$  of the set of Beltrami fields  that exhibit a saddle periodic orbit displaying a homoclinic tangency {and} such that the tangency can be non-degenerately unfolded by a family in $\mathscr B(\R^3)$. Thus $\cal V$ is included in $H_\mathscr{B}(\R^3)$ and so the interior $\cal N_{\mathscr B}(\R^3)$ of $H_\mathscr{B}(\R^3)$ is not empty.
\end{proof}
The proof of \cref{thm hom tg} follows exactly the strategy presented in \cref{sketch thm A}.

\subsection{Beltrami vector fields realizing prescribed jets of Melnikov functions at a strong, double heteroclinic link}\label{sec:LinALg}


We recall that in \cref{step 3 preuve}, we identified the $k$-jet of a real function $g$ at some point $\theta$, denoted by $J^k_\theta(g)$, to a vector in $\R^{k+1}$. Let $U\subset \R^3$ be a bounded open subset of $\R^3$. A consequence of \cref{thm:Meln:final}, which states that there exists an approximation of the right inverse of the  Melnikov operator in the space $\mathscr{B}(U)$ of Beltrami fields defined on $U$, is:

\begin{coro}\label{isoR}
Let $X\in\mathscr{B}(U)$ display a strong heteroclinic link $\Gamma\subset U$ and let $\Sigma\subset U$ be a Poincar\'e section endowed with coordinates, chosen as in \cref{thm:Meln:final}.
	For any $n\in\N$ and $N\in \N^*$, for every $N$-uplet of different points $\theta_1, \cdots , \theta_N\in \R/ \Z$,  the following map is onto:
	\[W\in \mathscr{B}(U) \mapsto (J^n_{\theta_i}(\mathcal{M}(W)))_{1\le i\le N} \in \R^{N\cdot(n+1)}\; .\]
\end{coro}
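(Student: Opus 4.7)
The plan is to exploit the linearity of the Melnikov operator together with the density statement of \cref{thm:Meln:final}. Observe that $\mathcal{M}:\mathscr{B}(U)\to C^\infty(\R/\Z,\R)$ is a continuous linear operator, and each jet evaluation $J^n_{\theta_i}:C^\infty(\R/\Z,\R)\to \R^{n+1}$ is continuous and linear as well. Hence the map
\[
\Psi: W\in \mathscr{B}(U)\mapsto \bigl(J^n_{\theta_i}(\mathcal{M}(W))\bigr)_{1\le i\le N}\in \R^{N(n+1)}
\]
is linear and continuous. In particular, its image $\Psi(\mathscr{B}(U))$ is a linear subspace of the finite dimensional space $\R^{N(n+1)}$.

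Next I will show that $\Psi(\mathscr{B}(U))$ is \emph{dense} in $\R^{N(n+1)}$. Fix an arbitrary target vector $v=(v_1,\dots,v_N)\in \R^{N(n+1)}$. Since the points $\theta_1,\dots,\theta_N\in \R/\Z$ are distinct, a standard partition-of-unity construction yields a smooth function $g\in C^\infty(\R/\Z,\R)$ such that $J^n_{\theta_i}(g)=v_i$ for every $1\le i\le N$; explicitly, take bump functions $\chi_i$ supported in pairwise disjoint neighborhoods of the $\theta_i$ with $\chi_i\equiv 1$ near $\theta_i$, and set $g:=\sum_{i=1}^N \chi_i\cdot P_i$, where $P_i$ is the degree-$n$ polynomial realizing the jet $v_i$ at $\theta_i$. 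Now for any $C^\infty$-neighborhood $\cal U$ of $g$ in $C^\infty(\R/\Z,\R)$, \cref{thm:Meln:final} provides a Beltrami field $W\in \mathscr{B}(U)$ with $\mathcal{M}(W)\in \cal U$. Since each $J^n_{\theta_i}$ is continuous on $C^\infty(\R/\Z,\R)$, this proves that $\Psi(W)$ can be made arbitrarily close to $v$ in $\R^{N(n+1)}$. Thus $\Psi(\mathscr{B}(U))$ is dense in $\R^{N(n+1)}$.

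Finally, a dense linear subspace of a finite dimensional normed vector space necessarily equals the whole space. Hence $\Psi(\mathscr{B}(U))=\R^{N(n+1)}$, which is the claimed surjectivity. There is essentially no serious obstacle in this argument: all the analytic difficulty has already been absorbed into \cref{thm:Meln:final} (itself a consequence of the Cauchy-Kovalevskaya right inverse and the global approximation theorem); the present corollary is merely the linear-algebraic step that converts $C^\infty$-density of the Melnikov image into exact realization of finitely many prescribed jets.
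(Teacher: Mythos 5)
Your proof is correct and follows essentially the same approach as the paper: both use linearity of the composite jet-Melnikov map, the surjectivity of finite-jet evaluation on $C^\infty(\R/\Z,\R)$, and \cref{thm:Meln:final} to approximate, then conclude surjectivity from finite-dimensionality. The paper phrases the last step as "a small perturbation of a basis of $\R^{N(n+1)}$ is still a basis" whereas you phrase it as "a dense linear subspace of a finite-dimensional space is the whole space," but these are the same observation.
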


\begin{proof}
As the map $S: g\in C^\infty(\R/\Z,\R)\mapsto  (J^n_{\theta_i}(g))_{1\le i\le N} \in \R^{N\cdot(n+1)}$ is onto, there exists an $N\cdot(n+1)$-vector subspace $E\subset C^\infty(\R/\Z,\R)$ such that $S(E)=
\R^{N\cdot(n+1)}$. Now choose a basis  $(g_i)_{1\le i\le N\cdot(n+1)}$ of $E$. 	For every $1\leq i\leq N\cdot(n+1)$, by \cref{thm:Meln:final},
	 there exists a Beltrami field $W_{i}\in\mathscr{B}(U)$ such that $\cal M(W_{i})$ is $C^\infty$-close to $g_{i}$.
	 The image of the family  $(W_i)_{1\le i\le N\cdot (n+1)}$ by the  linear map
	\[
	  W\in     \mathscr{B}(U)\mapsto (J^n_{\theta_i}(\cal M(W)))_{1\leq i\leq N}\in 
	\R^{N\cdot(n+1)}
	\]
	is then close to the basis $(S(g_i))_{1\le i\le N\cdot(n+1)}$ and so forms a basis of $\R^{N\cdot(n+1)}$. This implies that the latter map is onto.
\end{proof}

%
%
%

Let now $X\in\mathscr{B}(U)$ display a double, strong heteroclinic link $(\Gamma^+,\Gamma^-)$ in $U\subset \R^3$. We choose two Poincar\'e sections $\Sigma_+\subset U$ and $\Sigma_-\subset U$ of the dynamics nearby $\Gamma^+$ and $\Gamma^-$,  endowed with coordinates,
 as it is considered in \cref{thm:Meln:final}.
 We fix reference points   $0\equiv p^+\in\Gamma^+$ and $  0\equiv p^-\in\Gamma^-$ to define
 the associated Melnikov operators  $\cal M_+$ and $\cal M_-$.

Let $k\geq 1$. In order to prove \cref{prop:transverse+k-het-tg}, we want to control the $1$-jet of the Melnikov function at some point of $\Gamma^+$ \emph{and} the $k$-jet of the Melnikov function at some point of $\Gamma^-$. We recall that it is not possible to simply apply \cref{isoR} separately on $\Gamma^+$ and $\Gamma^-$ and then sum the resulting Beltrami fields. Indeed, if we apply \cref{isoR} on $\Gamma^+$, we do not have any control on the behavior of the corresponding Beltrami vector field on $\Gamma^-$ and vice versa: each Beltrami field could possibly destroy the profile of the Melnikov function of the other one.
We will overcome this issue through a new trick based on elementary linear algebra.

\begin{proof}[Proof of \cref{prop:transverse+k-het-tg}] \label{proof of prop:transverse+k-het-tg}
	By \cref{isoR} applied at the heteroclinic link $\Gamma^+$ with $n=N=1$ and $\theta_1=\alpha\in\R/\Z$,  the following map  is onto:
	\[J_+:= W\in \mathscr B(U)\mapsto J^1_{\alpha}(\cal M_+(W))\in \R^2\; .\]
Again by \cref{isoR} applied at the heteroclinic link $\Gamma^-$ with $n=k$ and $N=3$  different points  $\theta_1,\theta_2,\theta_3\in\R/\Z$,
the following map is onto:
\[J_-:= W\in\mathscr B(U)\mapsto (J_1,J_2,J_3)(W) \in
\R^{3k+3}\text{ with } J_i(W):= J^{k}_{\theta_i} (\cal M_-(W))\in \R^{k+1}\; .\]
Equivalently,   the family  $(\ker J_1,\ker J_2,\ker J_3)$ is in general position:
\[\mathrm{codim} \bigcap_i  \ker J_i = \sum_i\mathrm{codim}  \ker J_i   \; .\]

To prove the \cref{prop:transverse+k-het-tg}, it suffices to show the existence of $1\le i\le 3$ such that the product map $J_-\times J_i$ is onto $\R^{k+3}$.
Otherwise, for every $i$, we have:
\[\coker \ker J_+ +\coker \ker J_i >  \mathrm{codim} ( \ker J_+ \cap  \ker J_i)\]
Thus   $\ker J_+ +  \ker J_i\neq \mathscr B(U)$. Let  $F_i$
be a codimension 1 space containing $\ker J_+ +  \ker J_i$.
The family of spaces  $(F_1,F_2,F_3)$  must be in general position since its elements include those of  the family  $(\ker J_1,\ker J_2,\ker J_3)$ which are in general position. Thus $\mathrm{codim} (F_1\cap F_2\cap F_3)=3 $.  This is a contradiction with  $F_1\cap F_2\cap F_3\supset \ker J_+$ which is 2-codimensional.\end{proof}

\subsection{Families of Beltrami fields unfolding non-degenerately a homoclinic tangency of multiplicity $k$}

Recall that $U\subset\R^3$ is a bounded open subset of $\R^3$. Let $X\in\mathscr{B}(U)$ display a strong heteroclinic link $\Gamma\subset U$. Let $\Sigma\subset U$ be a Poincar\'e section chosen as in \cref{thm:Meln:final}. The Melnikov operator $\cal M$ was introduced to study the \emph{displacement} operator, as explained in \cref{sec:Melnikov}. 
Choose  a reference point in $\Gamma\cap\Sigma$. Then, the displacement operator, denoted by $\mathrm{displ}$, is defined on a neighborhood $\cal U$ of $0\in\Gamma^\infty_\leb(U)$, and for every perturbation $W\in\cal U$ the function
\[
t\in [0,1]\mapsto \mathrm{displ}(W)(t)\in\R
\]
corresponds to the distance between the perturbed local stable and unstable manifolds over a fundamental domain, where this domain is parametrized by $t$.

Assume now that $U\subset \R^3$ is so that $\R^3\setminus U$ is connected. Let $X\in\mathscr{B}(U)$ display a double, strong heteroclinic link $(\Gamma^+,\Gamma^-)$ contained in $U$\footnote{This is the case of \cref{example:EPSR}, where $U$ is the intersection of $\R^3\setminus\{(0,0)\}\times\R$ with a large ball.}. Denote by $\mathrm{displ}_+$ and $\mathrm{displ}_-$ the displacement operators corresponding to the heteroclinic links $\Gamma^+$ and $\Gamma^-$ respectively.
From \cref{prop:transverse+k-het-tg} and since the Melnikov operator is the first order approximation of the displacement operator, we deduce the following

\begin{coro}\label{coro:het:tg}
Let $k\geq 0$. There exist $\alpha, \beta\in \R/\Z$, a $(k+3)$-vector subspace $\cal W\subset \mathscr{B}(U)$ and a neighborhood  $\cal U$ of $0$ in $\cal W$   such that the map
\[
W\in\cal U\mapsto \left( J^1_\alpha(\mathrm{displ}_+(W)),J^{k+1}_\beta(\mathrm{displ}_-(W)) \right)\in\R^{k+4}
\]
is a diffeomorphism onto a neighborhood of $0\in \R^{k+3}$. 
\end{coro}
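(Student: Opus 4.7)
The plan is to deduce this from Proposition~\ref{prop:transverse+k-het-tg} by the inverse function theorem, using that the Melnikov operator is the first-order term of the displacement operator. The conceptual content is that Proposition~\ref{prop:transverse+k-het-tg} already selects a finite dimensional space $\cal W$ of Beltrami perturbations on which the two linear jet-of-Melnikov maps assemble into a linear isomorphism onto $\R^{k+3}$, and everything else is a soft transfer from the linear (Melnikov) picture to the nonlinear (displacement) picture.

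First I would set the stage analytically. Fix a small neighborhood $\cal U_0$ of $0 \in \Gamma^\infty_\leb(U)$ on which both displacement maps $W \mapsto \mathrm{displ}_\pm(W) \in C^\infty(\R/\Z,\R)$ are defined and smooth; this uses the fact that the heteroclinic links $\Gamma^\pm$ persist as pairs of invariant manifolds transverse to the Poincaré sections $\Sigma_\pm$ under small perturbations, with $\mathrm{displ}_\pm(0) = 0$. The standard Poincaré--Melnikov theorem (cf.\ Section~\ref{sec:Melnikov}) then yields
\[
\mathrm{displ}_\pm(W) = \cal M_\pm(W) + R_\pm(W), \qquad R_\pm(W) = O(\|W\|^2),
\]
so that the Fréchet differentials at $W = 0$ are
\[
D_0 \mathrm{displ}_\pm = \cal M_\pm.
\]
Composing with the continuous linear jet extraction maps $J^1_\alpha$ and $J^{k+1}_\beta$ (for suitable $\alpha,\beta \in \R/\Z$ to be chosen) gives smooth maps $\Psi_\pm$ on $\cal U_0$ with linearizations $J^1_\alpha \circ \cal M_+$ and $J^{k+1}_\beta \circ \cal M_-$ at the origin.

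Next I would invoke Proposition~\ref{prop:transverse+k-het-tg} (applied with the appropriate jet orders dictated by the statement) to produce points $\alpha,\beta \in \R/\Z$ and a $(k+3)$-dimensional subspace $\cal W \subset \mathscr B(U)$ of Beltrami fields such that the linear map
\[
L: W \in \cal W \longmapsto \bigl(J^1_\alpha(\cal M_+(W)),\, J^{k+1}_\beta(\cal M_-(W))\bigr)
\]
is an isomorphism onto its target (the dimension count in Proposition~\ref{prop:transverse+k-het-tg} is what pins $\dim \cal W = k+3$ to match the dimension of the codomain). Define
\[
\Phi: \cal W \longrightarrow \R^{k+3}, \qquad \Phi(W) = \bigl(J^1_\alpha(\mathrm{displ}_+(W)),\, J^{k+1}_\beta(\mathrm{displ}_-(W))\bigr).
\]
Then $\Phi(0) = 0$ and, by the previous paragraph, $D_0 \Phi = L$, which is a linear isomorphism of finite dimensional spaces.

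Finally, I would apply the classical inverse function theorem in the finite dimensional space $\cal W$: there exists a neighborhood $\cal U \subset \cal W$ of $0$ such that $\Phi|_{\cal U}$ is a $C^\infty$ diffeomorphism onto a neighborhood of $0 \in \R^{k+3}$, which is exactly the conclusion. The only subtlety — and the step I would scrutinize most — is the smoothness and the Melnikov-as-linearization statement for $\mathrm{displ}_\pm$ at a Beltrami perturbation (rather than a generic smooth divergence-free perturbation); but since $\cal W \subset \mathscr B(U) \subset \Gamma^\infty_\leb(U)$ is just a restriction of the ambient smooth maps $\mathrm{displ}_\pm$ to a closed linear subspace of their domain, both the smoothness and the differential formula are automatic from the general Poincaré--Melnikov analysis carried out in Section~\ref{sec:Melnikov}. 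No further obstacle is expected.
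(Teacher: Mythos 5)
Your proof is correct and follows essentially the same route as the paper: take $\alpha,\beta$ and the $(k+3)$-dimensional $\cal W$ from Proposition~\ref{prop:transverse+k-het-tg}, observe via Poincar\'e--Melnikov that $\cal M_\pm = D_0\mathrm{displ}_\pm$, and invoke the inverse function theorem on the finite-dimensional space $\cal W$. One caution: the corollary's statement writes $J^{k+1}_\beta$ and $\R^{k+4}$, which is inconsistent with the claim of a diffeomorphism from the $(k+3)$-dimensional $\cal W$ onto a neighborhood of $0\in\R^{k+3}$; the paper's own proof (and Proposition~\ref{prop:transverse+k-het-tg}) uses $J^k_\beta$, so you should work with $J^k_\beta$ throughout — your parenthetical remark that Proposition~\ref{prop:transverse+k-het-tg} ``pins $\dim\cal W = k+3$ to match the codomain'' only holds with that correction, since $J^1_\alpha$ contributes $2$ coordinates and $J^k_\beta$ contributes $k+1$.
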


\begin{proof}
Let $\alpha,\beta\in [0,1]$ and $\cal W\subset \mathscr{B}(U)$ be given by \cref{prop:transverse+k-het-tg}. The latter states that the following Melnikov operator is an isomorphism:
\[
	W\in \cal W\mapsto \left(J^1_{\alpha }(\cal M_+(W)), J^k_{\beta}(\cal M_-(W))\right)\in\R^{k+3}\, .
	\]
By the Poincar\'e-Melnikov Theorem (see \cref{sec:Melnikov}), the Melnikov operator is the differential of the displacement operator at $0$. Thus by the local inversion theorem, the displacement operator is a diffeomorphism from an open neighborhood $\cal U$  of $0$ onto an open neighborhood of $0\in\R^{k+3}$. 
\end{proof}

The proof of \cref{thm hom tg} follows then from \cref{coro:het:tg} and the para-inclination Lemma.

\begin{proof}[Proof of \cref{thm hom tg}]
First we extract from the family $(X+W)_{W\in \cal U}$ a $k$-parameter family
$(X+W)_{W\in \cal U'}$ such that $W^u_{loc} (\gamma^-,X+W) $  intersects transversally
$W^s_{loc} (\gamma^+,X+W) $ for every $W\in \cal U'$ and  $(W^s_{loc} (\gamma^-,X+W) )_{W\in \cal U'} $ unfolds non-degenerately a heteroclinic tangency of multiplicity $k$ with
 $(W^u_{loc} (\gamma^+,X+W) )_{W\in \cal U'} $.
 In order to do so, for a small   $\delta>0$, we consider the preimage, denoted by $\cal U'\subset \cal U$, of $\{0\}\times\{\delta\}\times (-\delta, \delta)^{k}\times \{0\} \times \{\delta\}$ by the diffeomorphism of \cref{coro:het:tg}. Note that for $\delta$ sufficiently small the map
\begin{equation}\label{restr:het}
W\in \cal U'\mapsto J^{k+1}_\beta(\mathrm{displ}_-(W))\in\R^{k}\times \{0\} \times \{\delta\}
\end{equation}
is still a diffeomorphism onto its image, which contains $\{0\}^k \times\times \{0\} \times \{\delta\}$. Thus there is indeed
a non-degenerate unfolding of  a heteroclinic tangency of multiplicity $k$  between $(W^s_{loc} (\gamma^-,X+W) )_{W\in \cal U'} $ and
 $(W^u_{loc} (\gamma^+,X+W) )_{W\in \cal U'} $.

Up to shrinking $\cal U'$, view in the Poincar\'e section $\Sigma$ defined above, we obtained a family of Poincar\'e maps $(f_W)_{W\in \cal U'}$ which displays two saddle fixed points $q^+$ and $q^-$ such that
$W^u_{loc} (q^-,f_W) $ intersects transversally $W^s_{loc} (q^+,f_W) $ for every $W\in \cal U'$, and so that  there is  a non-degenerate unfolding of  a heteroclinic tangency of multiplicity $k$  between $(W^s_{loc} (q^-,f_W) )_{W\in \cal U'} $ and
 $(W^u_{loc} (q^+,f_W) )_{W\in \cal U'} $.
Let $(S_W)_{W\in  \cal U'} $   be a family of segments contained in $(W^u_{loc} (q^-,f_W))_{W\in \cal U'} $ intersecting transversally  $(W^s_{loc} (q^+,f_W))_{W\in \cal U'} $. By the para-inclination   \cref{para-inclination lemma}, we can tune $(S_W)_{W\in  \cal U'} $ such that it is sent by a large  iterate $(f^n_W)_{W\in \cal U'}$ to a family $(f^n_W(S_W))_{W\in  \cal U'} $ which is $C^\infty$-close to $(W^u_{loc} (q^+,f_W))_{W\in \cal U'} $. As the  non-degeneracy of an unfolding is an open property, it comes that $(f^n_W(S_W))_{W\in  \cal U'} $ and $(W^s_{loc} (q^-,f_W))_{W\in \cal U'} $ unfold non-degenerately a tangency of multiplicity $k$.
As $S_W\subset W^u_{loc} (q^-,f_W) $ for every $W\in \cal U'$, we showed that $(f_{W})_{W\in \cal U'}$ unfolds non-degenerately a homoclinic tangency of multiplicity $k$ for $q^-$. This implies that there exists a $k$-parameter family of Beltrami vector fields $(X+W_p)_{p\in\I^k}$ in $\mathscr{B}(U)$, where each $W_p\in\cal U'$, which unfolds non-degenerately a homoclinic tangency of multiplicity $k$ for $\gamma^-$ at a parameter $p_0\in\I^k$. Denote by $Y:=X+W_{p_0}$, while $Z_i:=\partial_i W_{p_0}$ is the partial derivative along the direction of the $i$-th coordinate of $p\in\I^k$, for $i=1,\dots,k$. Since unfolding non-degenerately a homoclinic tangency of multiplicity $k$ is a robust condition, the family $(Y+p_1\cdot Z_1+\dots+p_k\cdot Z_k)_{(p_1,\dots,p_k)\in\I^k}$ unfolds non-degenerately a homoclinic tangency of multiplicity $k$ at some parameter in $\I^k$ close to $0$. Since $\R^3\setminus U$ is connected, \cref{GAT} provides the existence of vector fields $\tilde Y,\tilde Z_1,\dots,\tilde Z_k$ in $\mathscr{B}(\R^3)$ such that their restrictions to $U$ are close to $Y,Z_1,\dots,Z_k$ respectively. Again by the robustness of a non-degenerate unfolding of a homoclinic tangency of multiplicity $k$, the family of Beltrami fields $(\tilde Y+p_1\cdot \tilde Z_1+\dots+p_k\cdot\tilde Z_k)_{(p_1,\dots,p_k)\in\I^k}$ is the sought family in $\mathscr{B}(\R^3)$.

\end{proof}

\cn

\section{Proof of \cref{ThmD}}\label{proof thm D}
We are going to prove that  $\cal N_{\mathscr B}(\R^3)$ is GST wild.  In other words, we will prove that for every $J\ge 1$ and every non-empty open subset $\mathscr U$ of $\cal N_\mathscr{B}(\R^3)$, the following property holds true:
\begin{enumerate}[$(GST_{\mathscr U,J})$]
	\item
	there  is  a family $(X_p)_{p\in \I^J}$ of vector fields $X_p\in \mathscr U$ such that $X_0$ has a saddle periodic orbit $O$ which displays  $J$ different quadratic homoclinic tangencies unfolded  non-degenerately by the family.
\end{enumerate}

To this end we start with a family $(X_p)_{p\in \I}$ of vector fields in $\mathscr U$ which unfolds a quadratic homoclinic  tangency of a saddle periodic orbit $O$ at $p=0$. As the space of Beltrami fields is a vector space, we  can assume that $X_p=X_0+p\cdot X_1$ with $X_1:=\partial_p X_p|_{\{p=0\}}$.

We are going to use Duarte's \cref{Duarte thm} to create robust homoclinic tangencies for disjoint but homoclinically related hyperbolic basic sets.
Actually we would like moreover that their unions $C_p$ and $C_p'$ with their orbits of  homoclinic tangency are disjoint. Then the perturbative \cref{GATcoro} enables to create a new vector field $X_2$ which is small on $C_p$ but close to $X_1$  on  $C_p'$, and so   the family $(X_0+p_1\cdot X_1+p_2\cdot X_2)_{p=(p_1,p_2)\in \I^2}$  unfolds these two   basic sets  independently.  To obtain that $C_p$ and $C_p'$ are disjoint we will use     \cref{Duarte coro}  of Duarte's proof.

For the sake of simplicity, all along the proof, the hyperbolic continuation of an $f$-orbit $O$  for a perturbation $ \tilde f$ of $f$ will still be denoted by $O$.

\subsection{Case $J=2$}\label{creation disjoint wild horseshoe}
From the latter discussion,  to prove  $(GST_{\mathscr U,2})$, it suffices to show:
\begin{proposition} \label{main2casek1double}
	Let  $(X_0,X_1)$ be  Euclidean Beltrami fields and  let
	$X_p:= X_0+p\cdot X_1.$ Assume that    $(X_p)_{p\in \R}$  unfolds non-degenerately a quadratic   homoclinic tangency $\Gamma_0$ of a saddle orbit  $O$ at $p=0$. 
	Then there exist  an arbitrarily small  parameter   $p_0\in\I$, an Euclidean Beltrami field $X_{2}$ and a   parameter   $\tilde p_0\in\I^2$  arbitrarily close to  $(p_0,0)\in\I^2$  such that the family $(\tilde X_p)_{p\in \R^{2}}$ formed by $\tilde X_p:= X_0+p_1\cdot X_1+  p_{2} \cdot X_{2}$ unfolds non-degenerately
	$2$ different quadratic homoclinic tangencies of the saddle orbit $O$ at the parameter  $\tilde p_0$.
	%
	%
	%
\end{proposition}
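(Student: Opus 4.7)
The plan is to use Duarte's theorem to spawn from the tangency $\Gamma_0$ of $O$ a single wild horseshoe $\Lambda$ that is homoclinically related to $O$, then to extract from $\Lambda$ two \emph{spatially disjoint} wild sub-structures that will generate two spatially separated candidate homoclinic tangencies of $O$, and finally to apply \cref{GATcoro} to cutoff $X_1$ on one of the two regions in order to obtain $X_2$, so that the two resulting tangencies of $O$ are independently controlled by the two parameters. Concretely, I would start by applying \cref{Duarte thm} to the non-degenerate unfolding $(X_p)_{p\in\I}$ of $\Gamma_0$ to obtain an arbitrarily small parameter $p_0\in\I$ at which $X_{p_0}$ admits a wild hyperbolic horseshoe $\Lambda$ homoclinically related to the hyperbolic continuation of $O$, with non-degenerate unfolding in the direction $X_1$ thanks to \cref{Duarte coro}.

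Using the shift structure of $\Lambda$ together with the localization property of \cref{Duarte coro}, I would next extract two disjoint wild sub-horseshoes $\Lambda_A,\Lambda_B\subset\Lambda$, each carrying a robust quadratic homoclinic tangency (so each still wild in the sense of \cref{def wild}) and each homoclinically related to $O$. They would be obtained as maximal invariant subsets of $\Lambda$ inside disjoint small neighborhoods of two distinct periodic orbits of $\Lambda$, arranged so that their flow orbits in $\R^3$, together with a chosen orbit of their robust homoclinic tangencies, lie in two disjoint compacta $K_A,K_B\subset\R^3$ with connected complement. Applying \cref{robust implies homo} separately to $\Lambda_A$ and $\Lambda_B$, the orbit $O$ then acquires, at parameters arbitrarily close to $p_0$, two quadratic homoclinic tangencies $\Gamma^A\subset K_A$ and $\Gamma^B\subset K_B$, each unfolded non-degenerately by $X_1$.

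At this point I would apply \cref{GATcoro} to the Beltrami field $X_1$ on a neighborhood of $K_A\sqcup K_B$ to produce a Beltrami field $X_2\in\mathscr B(\R^3)$ which is arbitrarily close to $X_1$ on $K_B$ and arbitrarily small on $K_A$. For the two-parameter family $\tilde X_p:=X_0+p_1\cdot X_1+p_2\cdot X_2$ near $(p_0,0)$, let $d_A(p_1,p_2)$ and $d_B(p_1,p_2)$ denote displacement functions measuring the relative positions of the perturbed local stable and unstable manifolds of the hyperbolic continuation of $O$ along the two candidate tangency orbits $\Gamma^A,\Gamma^B$, defined via Poincar\'e sections as in \cref{sec:Melnikov}. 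By construction $\partial_{p_1}d_A(p_0,0)$ and $\partial_{p_2}d_B(p_0,0)$ are bounded away from zero, while $\partial_{p_2}d_A(p_0,0)$ is arbitrarily small; hence the Jacobian of $(d_A,d_B)$ at $(p_0,0)$ is essentially lower triangular with non-vanishing diagonal, thus invertible. The implicit function theorem then yields a parameter $\tilde p_0$ arbitrarily close to $(p_0,0)$ at which $d_A(\tilde p_0)=d_B(\tilde p_0)=0$, producing two distinct quadratic homoclinic tangencies of $O$ for $\tilde X_{\tilde p_0}$, and the invertibility of the Jacobian is exactly the non-degeneracy of the unfolding in the family $(\tilde X_p)_{p\in\R^2}$.

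The principal obstacle is the second step: extracting from the single wild horseshoe $\Lambda$ two \emph{spatially disjoint} wild sub-horseshoes whose induced tangency orbits of $O$ remain supported in disjoint compacta $K_A,K_B$ with connected complement in $\R^3$, as required by \cref{GATcoro}. My tentative approach is to use Markov partitions for $\Lambda$ combined with the thickness estimates of Newhouse--Duarte to realize $\Lambda_A$ and $\Lambda_B$ as locally maximal invariant subsets inside two disjoint Markov rectangles (chosen far apart in the ambient flow), so that each is still wild and their continuous-time orbits in $\R^3$ are genuinely separated; a secondary, purely topological, verification is that the $K_\pm$ can be chosen with connected complement in $\R^3$.
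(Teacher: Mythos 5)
Your global strategy---produce spatially disjoint wild hyperbolic structures via Duarte's theorem, cut off $X_1$ on one of them via \cref{GATcoro}, and conclude from the essentially triangular Jacobian of the two displacement functions---is the right skeleton, but your central step is unsound. Extracting two spatially disjoint wild sub-horseshoes $\Lambda_A,\Lambda_B$ from a single wild horseshoe $\Lambda$ does not work as you describe: the locally maximal invariant subset of $\Lambda$ in a small neighborhood of one of its periodic orbits is just that periodic orbit, and restricting $\Lambda$ to a proper subset of Markov rectangles deletes bridges from the underlying Cantor sets, which generically drops the stable and unstable thicknesses below the Newhouse gap-lemma threshold $\tau^s\cdot\tau^u>1$ that wildness in the sense of \cref{def wild} relies on. The paper avoids this entirely by applying Duarte's \cref{Duarte thm} \emph{twice}: first to $\Gamma_0$ to obtain a wild horseshoe $\Lambda_p$; then, using its wildness together with \cref{robust implies homo} to produce a new homoclinic tangency orbit $Q$ of the saddle $P$ at some parameter $p_0$, a second time \emph{at the unfolding of $Q$} to obtain a second wild horseshoe $\Lambda'_p$. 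Disjointness of $\Lambda_p$ and $\Lambda'_p$ is guaranteed not by slicing one horseshoe but by \cref{Duarte coro}: $\Lambda'_p$ can be localized so that every one of its points visits a prescribed small neighborhood $U_0$ of a point $Q_0\in Q$, and one simply chooses $U_0$ disjoint from $\Lambda_{p_0}$.

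A second gap concerns your claim that the tangency orbits $\Gamma^A,\Gamma^B$ of $O$, produced via \cref{robust implies homo}, satisfy $\Gamma^A\subset K_A$ and $\Gamma^B\subset K_B$. Every homoclinic orbit of $O$ accumulates on $O$, so the closures of $\Gamma^A$ and $\Gamma^B$ both contain $O$; since $K_A$ and $K_B$ are disjoint compacta, at most one containment can hold. This also means your displacement functions $d_A,d_B$ would depend on $X_2$ along orbit segments outside $K_A\sqcup K_B$, where \cref{GATcoro} gives no control. The paper's fix is to work first with tangencies of periodic orbits $O_1\in\Lambda_p$ and $O_2\in\Lambda'_p$ \emph{inside} the horseshoes---whose tangency orbits and closures genuinely lie in the suspended compacta $C_p,C'_p$ on which $X_2$ is controlled---then fix $X_2$ and the parameter, and only at the very end transfer both tangencies to the common saddle $O$ using the para-inclination \cref{para-inclination lemma}, a robust statement that survives once the family is set. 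Finally, the ``secondary topological verification'' you mention is genuinely substantive: the paper must prove the tangle sets $K_p,K'_p$ are totally disconnected (\cref{disconected}) to conclude that the suspensions $C_p,C'_p$ have connected complement in $\R^3$ (\cref{lemma conn compl}), which is a hypothesis of \cref{GATcoro}.
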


\begin{proof}[Proof of \cref{main2casek1double}]
	For $p\in \I$, we recall that $X_p= X_0+p\cdot X_1$, where $X_0$ and $X_1$ are Beltrami fields.  Also  $(X_p)_{p\in \R}$ unfolds non-degenerately a quadratic   homoclinic tangency   $\Gamma_0$ \cn of a saddle  periodic orbit  $O$ at $p=0$.
	Let $\Sigma$ be a disk transverse to $X_p$ for every $p$ small, and intersecting  $O$ at a unique point $P$.  We assume that
	\[(cl(\Sigma)\setminus \Sigma)\cap (\Gamma_0 \cup O)=\emptyset\; .\]
	Then the return time to $\Sigma$ of the flow of  $X_0$ is smooth at some neighborhood $\check \Sigma\subset \Sigma$ of $\Sigma\cap (\Gamma_0 \cup O)$. For every $\epsilon>0$ small, up to taking $\check \Sigma$ slightly smaller,  the same occurs for the flows of $X_p$ for any $|p|<\epsilon$.

	Let $f_p: \check \Sigma\to \Sigma$ be the induced Poincar\'e map. Note that $(f_p)_{-\epsilon\le p\le \epsilon}$ is a $C^\infty$  family of maps in $\Diff^\infty(\check \Sigma, \Sigma)$.  
	It is standard that the map $f_p$ leaves invariant a symplectic form $\omega_p$  which depends smoothly on $p$. By Darboux's Theorem,  
	there are smooth coordinates $\phi_p:\D \to \Sigma$ such that $\phi_p$ sends $\leb$ into $\R\cdot \omega_p$. Then $F_p:= \phi_p^{-1}\circ f_p\circ \phi_p$ forms a smooth family of symplectic maps.  Moreover:
	\begin{fact}The  family $(F_p)_{-\epsilon\le p\le \epsilon}$ satisfies the assumptions of Duarte's \cref{Duarte thm}.\end{fact}
	Hence, for $\epsilon$ arbitrarily small, there exist   a non trivial  interval $I_0\subset (-\epsilon, \epsilon)$  and a continuation $(\Lambda_p)_{p\in I_0}$ of a hyperbolic horseshoe for $(f_p)_{p\in I_0}$   such that for every $p\in I_0$:
	\begin{itemize}
		\item $\Lambda_p$ is homoclinically related to the
		saddle fixed point $P$.
		\item $\Lambda_p$ displays a robust  homoclinic tangency.
	\end{itemize}
	Let $W^s_{loc}(\Lambda_p; f_p)= \bigcup_{z\in \Lambda_p} W^s_{loc}(z; f_p)$ and $W^u_{loc}(\Lambda_p; f_p)= \bigcup_{z\in \Lambda_p} W^u_{loc}(z; f_p)$ be the continuous families of local stable and unstable manifolds which are robustly tangent.
	Moreover by  \cref{Duarte coro}, we can assume that this unfolding  is non-degenerate.

	For any $p\in I_0$, there exists a  point   $z\in \Lambda_p$ such that
	$W^u_{loc}(z;f_p)$ is tangent to $W^s_{loc}(z;f_p)$.   Then there are segments of $W^u(P;f_p)$ and $W^s(P;f_p)$ which are arbitrarily close to
	$W^u_{loc}(z;f_p)$ and $W^s_{loc}(z;f_p)$. As the unfolding of the  tangency between $W^u_{loc}(z;f_p)$ and $W^s_{loc}(z;f_p)$ is non-degenerate,    there exists $p_0$ in the interior of $I_0$ such that $P$ displays an orbit of  homoclinic tangency $Q$ at $p=p_0$ which unfolds non-degenerately in  $(f_p)_{p\in I_0}$. Let  $Q_0$ be a point of $Q$ and let $U_0$ be a neighborhood of $Q_0$ which does not intersect $\Lambda_{p_0}$. 
	
	Now we apply a second time Duarte's \cref{Duarte thm} and moreover \cref{Duarte coro} for the homoclinic orbit of $Q$ and the point $Q_0\in Q$.  This gives the existence of  a non trivial segment $I_1\Subset I_0$ close to $p_0$ and a continuation of $(  \Lambda'_p)_{p\in I_0}$ of another hyperbolic horseshoe such that for every $p\in I_1$:
	\begin{enumerate}[$(i)$]
		\item The set $ \Lambda'_p$ is homoclinically related to $P$ and
so that any point of $\Lambda_p'$ has an iterate in $U_0$.
		\item The set $  \Lambda'_p$ displays a robust  homoclinic tangency which unfolds non-degenerately.
	\end{enumerate}
As $I_1$ is close to $p_0$, $\Lambda_p$ is close to $\Lambda_{p_0}$ and so does not intersect $U_0$. Hence none of the point $\Lambda_p$ has an iterate in $U_0$.  By $(i)$, this implies that $\Lambda_p$ and  $\Lambda'_p$ are disjoint for every $p\in I_1$.
	Let  $W^s_{loc}(\Lambda'_p; f_p)= \bigcup_{z\in\Lambda'_p} W^s_{loc}(z; f_p)$ \cn and $W^u_{loc}(\Lambda'_p; f_p)= \bigcup_{z\in \Lambda'_p} W^u_{loc}(z; f_p)$ be the continuous families of local stable and unstable manifolds which are robustly tangent and whose unfolding is non-degenerate. As $\Lambda'_p$ and $\Lambda_p\cup\{P\}$ are disjoint for every $p\in I_1$, we have:
	\begin{itemize}
		\item $W^s_{loc} ( \Lambda'_p; f_p)$ is disjoint from $W^s_{loc} ( \Lambda_p\cup\{P\}; f_p)$,
		\item $W^u_{loc} (  \Lambda'_p; f_p)$ is disjoint from $W^u_{loc} ( \Lambda_p\cup\{P\}; f_p)$.
	\end{itemize}
	We can assume  that all the local stable manifolds are compact; thus all the above sets are  compact. Thus for every $p\in I_1$:
	\begin{itemize}
		\item $K'^0_p:=W^s_{loc} ( \Lambda'_p; f_p)\cap W^u_{loc} (\Lambda'_p; f_p)$ and $  K_p^0:=W^s_{loc} (  \Lambda_p; f_p)\cap W^u_{loc} (  \Lambda_p; f_p)$ are disjoint.
		\item more generally $f^n_p(K'^0_p)$ and $f^m_p(K_p^0)$ are disjoint for every $n, m\in \Z$.
	\end{itemize}
	As $f^n_p(K'^0_p)\to \Lambda_p'$ and $f^n_p(K_p^0)\to \Lambda_p$ when $n\to \pm \infty$ we obtain:
	\begin{fact} \label{disjoint Kp} For any $p\in I_1$,
		$K_p:= \bigcup_{n\in \Z} f^n_p (K_p^0) $ and  $K'_p:= \bigcup_{n\in \Z} f^n_p (K'^0_p) $ are disjoint  compact~sets.\end{fact}
	Let us precise the topology of these sets:
	\begin{fact} \label{disconected}
		The subsets $K'_p$ and $K_p$ are totally disconnected.
	\end{fact}
	\begin{proof} Let us prove that $K_p$ is   totally   disconnected; the proof for $K'_p$ is identical.
		First,  $\Lambda_p$  is totally disconnected since it is a horseshoe and so a Cantor set.
		
		Let us show that  $ K_p^0 \setminus \Lambda_p$ is totally disconnected. Indeed,
		$ K_p^0 \setminus \Lambda_p$ is formed by the quadratic tangency points between
		$ W^s_{loc} (  \Lambda_p; f_p)$ and $ W^u_{loc} (  \Lambda_p; f_p)$.  These are a totally disconnected union of leaves  ($\Lambda_p$ is a horseshoe). Hence the set of quadratic tangencies between the leaves of  $ W^s_{loc} (  \Lambda_p; f_p)$ and $ W^u_{loc} (  \Lambda_p; f_p)$ is totally disconnected.
Thus  $K_p:= \Lambda_p\cup \bigcup_{n\in\Z} f_p^n(K^0_p\setminus  \Lambda_p)$ is a compact set equal to a countable union of  totally disconnected compact subsets. By
\cite[II.4.A]{HurWal41} any compact  set is
totally disconnected  iff it is zero dimensional in the sense of \cite[Def. II.1]{HurWal41}: any of its point has an arbitrarily small  clopen neighborhood. By \cite[Thm II.2.]{HurWal41},   a countable union of closed zero-dimensional sets is still zero-dimensional; thus the compact set $K_p$ is zero-dimensional and so   totally disconnected \cite{HurWal41}.		
	\end{proof}
	We have furthermore:
	\begin{lemma} \label{eta distant check K K}
		The maps $p\in I_1\mapsto  K_p$ and $p\in I_1\mapsto K'_p$ are upper semi-continuous: for every $p_1\in I_1$, for every neighborhood $U$ of $K_{p_1}$  (resp. $K'_{p_1}$), the compact set $K_p$ (resp.   $K'_p$) is included in $U$ for $p$ close to $p_1$.
	\end{lemma}
	\begin{proof}We prove the statement for  $p\in I_1\mapsto  K_p$; the proof for  $p\in I_1\mapsto K'_p$ is the same.
		It suffices to notice that $\bigcup_{p\in I_1} \{p \}\times K_p$ is compact. This is indeed the case since the local stable and unstable sets of $\Lambda_p$ vary continuously with $p$. So
		$\bigcup_{p\in I_1} \{p \}\times W^s_{loc} (K_p;f_p)$  and  $\bigcup_{p\in I_1} \{p \}\times W^u_{loc} (K_p;f_p)$      are compact, and their intersection
		$\bigcup_{p\in I_1} \{p \}\times K^0_p$   is compact.
		Hence $p\mapsto K_p^0$   is upper continuous. Thus  $p\mapsto f^n(K_p^0)$ is upper semi continuous and $p\mapsto \Lambda_p\cup \bigcup_{-N\le n\le N} f^n(K_p^0)$ is upper semi continuous for every $N$. As this sequence of functions converges uniformly to $p\mapsto K_p$ when $|n|\to \infty$, the latter is upper semi-continuous.  \end{proof}
	
	We recall that $K_p$ and $K'_p$ are included in $\check \Sigma$ for every $p\in I_1$. Also the return time $\tau_p$ from $\check \Sigma$ to $\Sigma$ is smooth. Let $(\phi_p^t)_{t\in \R}$ be the flow of $X_p$. We define for $p\in I_1$:
	\[C_p:= \bigcup_{z\in K_p} \{\phi_p^t(z): 0\le t\le \tau_p(z)\}\qand
	C'_p:= \bigcup_{z\in K'_p} \{\phi_p^t(z): 0\le t\le \tau_p(z)\}\; .\]
	By \cref{disjoint Kp}  it holds:
	\begin{fact}\label{compact disjoint} For every $p\in I_1$ the compact subsets $C_p$ and $C'_p$ of $\R^3$ are disjoint.\end{fact}
	By \cref{eta distant check K K} and the smoothness of $\tau$, we have:
	\begin{fact}\label{semi continuous} The maps $p\in I_1\mapsto  C_p$ and $p\in I_1\mapsto C'_p$ are upper semi-continuous.
	\end{fact}
	
	Now we prove:
	{
		\begin{lemma}\label{lemma conn compl}
			Let $p_0\in I_1$. The compact subsets $C_{p_0}$ and $C'_{p_0}$ of $\R^3$ have connected complement in $\R^3$.
		\end{lemma}
		\begin{proof}
			By symmetry, we show the lemma only for $C_{p_0}$. Assume by contradiction that $\R^3\setminus C_{p_0}$ has a bounded component $F$. As the flow of $X_{p_0}$ leaves invariant $C_{p_0}$, it leaves also the component $F$ of $\R^3\setminus C_{p_0}$ invariant. Thus the component $F$ intersects $\Sigma\setminus K_{p_0}$.
Recall that $C_{p_0}$ is included in the suspension of $\check \Sigma\Subset \Sigma$, which is isotopic to an embedded solid torus and so its complement is connected. Hence  $F\cap \Sigma$ cannot be connected to the boundary of $\Sigma$ by a path in $\Sigma\setminus K_{p_0}$.  This implies that $\Sigma\setminus K_{p_0}$ is not connected. This is a contradiction with the connectedness \cite[Theorem 4, Page 93]{Moise77} of the complement in a disk $\Sigma$ of any totally disconnected compact set, such as $K_{p_0}$, by \cref{disconected}.
		\end{proof}
	}
	
	%
	
	\begin{figure}[h]
		\centering
		\includegraphics[scale=0.5]{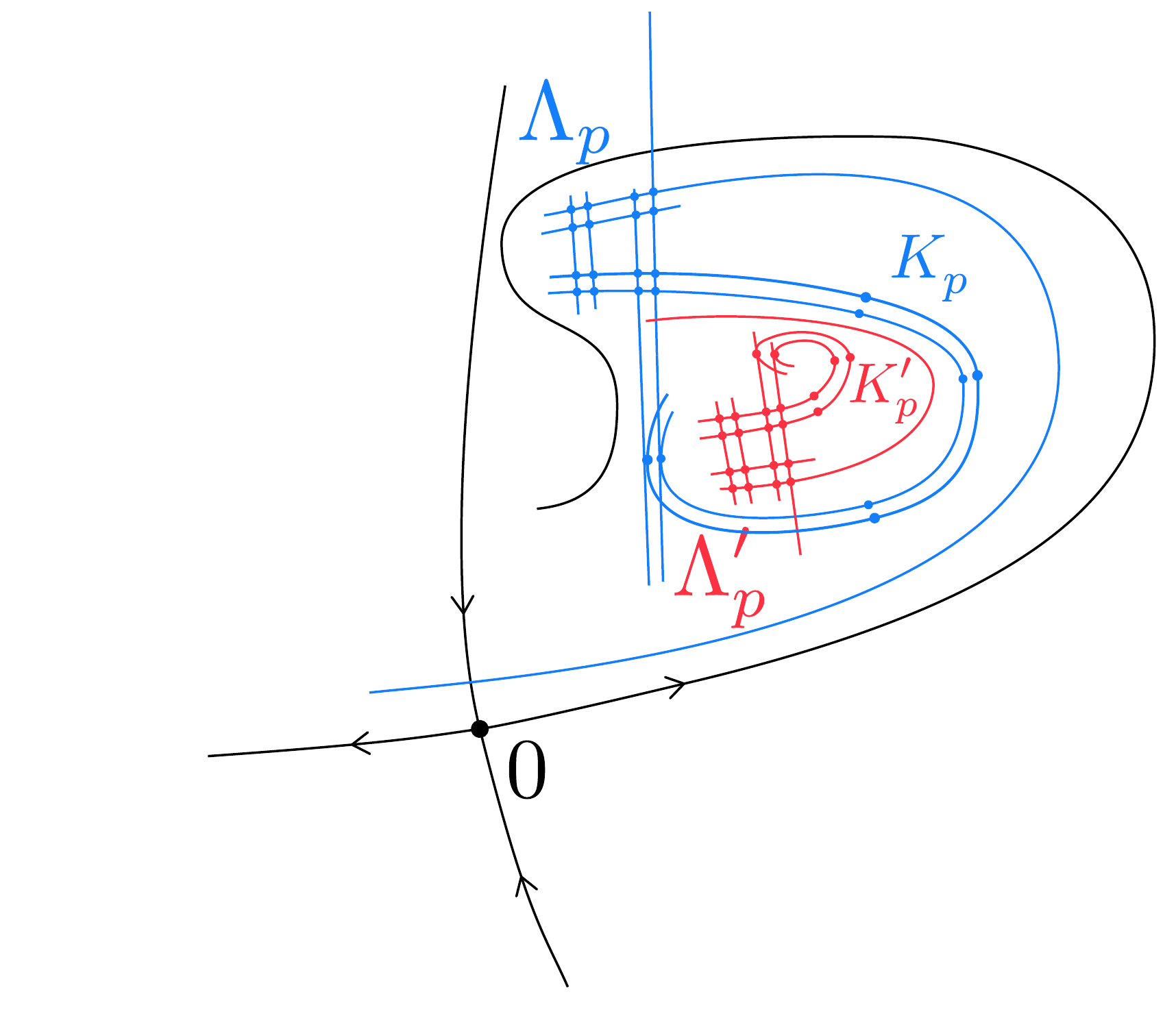}
		\caption{Basic hyperbolic sets $\Lambda_p$ and $\Lambda_p'$   displaying disjoint  homoclinic sets $K_p$ and $K_p'$.}
		\label{disjoint neigh}
	\end{figure}

	{
		Now we can apply the Global Approximation Theorem to construct $X_2$. More precisely, we apply \cref{GATcoro} with $C_{p_0}$ and $C'_{p_0}$ as respectively $K_+$ and $K_-$. They are disjoint compact subsets of $\R^3$ by \cref{compact disjoint} and they both have connected complement in $\R^3$ by \cref{lemma conn compl}. Thus \cref{GATcoro} provides the existence of a Beltrami field $X_2$ whose restriction to $C_{p_0}\sqcup C'_{p_0}$ is arbitrarly close to
		\[{ \tilde X_2:z\in V\sqcup V'\mapsto}
		\begin{cases}
		0 &\text{ if }z\in V, \\
		X_1(z) &\text{ if }z\in V'\, ,
		\end{cases}
		\]
		where $V$ and $V'$ are disjoint neighborhoods of respectively $C_{p_0}$ and $C'_{p_0}$.}
	%
	%
	We recall that for a dense subset of parameters $p_0\in I_1$, the vector  field $X_0+p_0 \cdot X_1$ displays a quadratic homoclinic tangency for a saddle periodic orbit $O_1\in \Lambda_{p_0}$ which unfolds non-degenerately. Also for $ p_0'$ arbitrarily small, the vector field $X_0+(p_0+  p_0')\cdot  X_1$ unfolds  non-degenerately a quadratic homoclinic tangency for a  saddle periodic orbit  $O_2\in \Lambda'_{p_0+ p_0'}$. Note that
	$X_0+p_0 \cdot  X_1+ p_0' \cdot  \tilde X_2$ displays
	two quadratic homoclinic tangencies with the orbits $O_1$ and $O_2$ and which are non-degenerately unfolded in the family $(X_0+p_1\cdot X_1+ p_2 \cdot  \tilde X_2)_{(p_1,p_2)\in\I^2}$.
	Indeed by \cref{semi continuous}, the closure of the orbit of the tangencies $C_p'$  of $\Lambda'_p$ is in $V'$  for $p$  close to $p_1$.
	
	As this is a robust condition, for $X_2$ close enough to $\tilde X_2$, there exists  $\tilde p_0'$ nearby $(p_0,p_0')$  (and so nearby $(p_0,0)$) such that the family $(\tilde X_{p})_{p=(p_1,p_2)\in \I^{2}}$ formed by $\tilde X_p:= X_0+p_1\cdot X_1+ p_{2} \cdot X_{2}$ displays at  $p=\tilde p_0'$   two quadratic homoclinic tangencies which unfold  non-degenerately for, respectively, the hyperbolic  continuations of $O_1$ and $O_2$.

	It remains to prove the existence of two quadratic homoclinic tangencies which unfold  non-degenerately for the same periodic saddle $O$. To this end,  we recall that $O_1\in C_{\tilde p_0}$ and $O_2\in C'_{\tilde p_0}$  are homoclinically related to $O$.
	This implies that for every $1\le k\le 2$, there are segments $S_k$ and $U_k$ of $W^s(O ; f_{\tilde p_0'})$ and $W^u(O ; f_{\tilde p_0'})$  which are close to $W^s_{loc}(O_k; f_{\tilde p_0'})$ and $W^u_{loc}(O_k; f_{\tilde p_0'})$.
	Moreover, by the para-inclination lemma  \cite{Berger2016}, when $p$ varies, the hyperbolic continuation of  these segments forms families which are $C^2$-close to the hyperbolic continuation of  $W^s_{loc}(O_k; f_{\tilde p_0'})$ and $W^u_{loc}(O_k; f_{\tilde p_0'})$. In particular the relative positions are $C^1$-close. Thus by the local inversion Theorem there is $\tilde p_0$ arbitrarily close to $\tilde p_0'$ such that  the continuation of $S_k$ has a quadratic tangency with $U_k$ for every $1\le k\le 2$, and these quadratic homoclinic tangencies are unfolded non-degenerately when $p$ varies.
\end{proof}

\subsection{Case $J\ge 3$}\label{sec:main2induc}

To obtain  $(GST_{\mathscr U,J})$ for every $J\ge 1$, we will apply inductively on $k$ the following  generalization of \cref{main2casek1double}:

\begin{proposition} \label{premain2}
Let $k\ge 1$. 	Let $(X_0,X_1,\dots, X_k)$ be  Euclidean Beltrami fields and   for every $p=(p_1,\dots, p_k)\in \R^k$, let $X_p:= X_0+p_1\cdot X_1+\cdots +p_k\cdot X_k$.
	Assume that there exists a saddle periodic  orbit $O$   for $X_0$ which  displays  $k$ quadratic homoclinic tangencies which unfold non-degenerately in the family $(X_p)_{p\in \R^k}$.
	
	Then there exist   an arbitrarily small  parameter $p_0\in \R^k$, an Euclidean Beltrami field $X_{k+1}$ and a   parameter $\tilde p_0\in \R^{k+1}$ arbitrarily close to $(p_0,0)\in \R^{k+1}$  such that the family $(\tilde X_p)_{p\in \I^{k+1}}$ formed by $\tilde X_p:= X_0+p_1\cdot X_1+\cdots +  p_{k+1} \cdot X_{k+1}$ unfolds non-degenerately
	 $k+1$ different quadratic homoclinic tangencies to $O$ at the parameter  $\tilde p_0$.

	\end{proposition}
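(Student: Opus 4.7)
The proof follows the scheme used for \cref{main2casek1double}, now starting from an already $k$-parameter unfolding: the plan is to distinguish one direction of the family along which Duarte's theorem applies, run the disjoint-wild-horseshoe construction of Section~\ref{creation disjoint wild horseshoe}, and arrange that the new Beltrami field $X_{k+1}$ has no first-order effect on the $k$ tangencies already present.

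First, after a linear change of the $p$-coordinates (which only replaces the $X_j$'s by linear combinations of them, still Beltrami), I may assume the Jacobian $(\partial \mu_i/\partial p_j)_{1\le i,j\le k}$ at $p=0$ is the identity. Then the one-parameter subfamily $t\mapsto X_0+t\cdot X_k$ unfolds non-degenerately the quadratic homoclinic tangency $\Gamma_k$ of $O$. Applying Duarte's \cref{Duarte thm} along this subfamily, exactly as in Step $(ii)$ of the proof of \cref{main2casek1double}, produces a nontrivial interval $I_0$ of values of $p_k$ for which there is a wild horseshoe $\Lambda$ homoclinically related to $O$ whose robust homoclinic tangency unfolds non-degenerately. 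Inside $\Lambda$ I pick a quadratic homoclinic tangency orbit $Q$ and a point $Q_0\in Q$, and apply Duarte a second time near $Q_0$; by \cref{Duarte coro}, the resulting wild horseshoe $\Lambda'$ has every orbit crossing any prescribed small neighborhood $U_0$ of $Q_0$. Choosing $U_0$ disjoint from $\Lambda$ and from the orbits of $\Gamma_1,\dots,\Gamma_k$ forces $\Lambda'$ to be disjoint from all of them, for $p$ in a nontrivial interval $I_1\Subset I_0$ and some parameter $p^{(0)}\in\R^k$ close to $0$.

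Following Section~\ref{creation disjoint wild horseshoe}, I then form $K$ and $K'$ as the global stable/unstable intersections of $\Lambda$ and $\Lambda'$, and next $C$ and $C'$ by flowing $K,K'$ over the first return time to the Poincar\'e section, enlarging $C$ to also contain the orbits of $O$ and of $\Gamma_1,\dots,\Gamma_k$. The verifications of \cref{disconected}, \cref{eta distant check K K} and \cref{lemma conn compl}---total disconnectedness, upper semi-continuity, and connectedness of the complements---transfer without change. \cref{GATcoro} then produces a Beltrami field $X_{k+1}\in\mathscr{B}(\R^3)$ arbitrarily close to $0$ on a neighborhood of $C$ and arbitrarily close to $X_k$ on a neighborhood of $C'$. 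The extended family $\tilde X_p:=X_0+\sum_{j=1}^{k+1}p_j\cdot X_j$ then, at some $\tilde p_0$ close to $(p^{(0)},0)$, unfolds via $p_{k+1}$ a new quadratic homoclinic tangency for some saddle orbit in $\Lambda'$, while the $k$ original tangencies survive and continue to be unfolded by $p_1,\dots,p_k$. A final application of the para-inclination \cref{para-inclination lemma}, using that $\Lambda'$ is homoclinically related to $O$, transports the new tangency to a genuine homoclinic tangency of $W^s(O)$ and $W^u(O)$.

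The delicate point is the invertibility of the full $(k+1)\times(k+1)$ Jacobian $(\partial\mu_i/\partial p_j)$ at $\tilde p_0$. The matrix is a small perturbation of a block triangular one: since $X_{k+1}$ is as close to $0$ as desired on a neighborhood of $C$ containing the orbits of $\Gamma_1,\dots,\Gamma_k$, the first-order effect of $p_{k+1}$ on $\mu_1,\dots,\mu_k$ is negligible, so the upper-left $k\times k$ block remains a small perturbation of the invertible Jacobian given by the hypothesis; and since $X_{k+1}$ is close to $X_k$ near $\Lambda'$, the entry $\partial\mu_{k+1}/\partial p_{k+1}$ inherits the non-zero Melnikov value coming from the non-degenerate unfolding of $\Lambda'$ in the $p_k$-direction supplied by \cref{Duarte coro}. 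Non-degeneracy being a $C^2$-open condition on families, the Jacobian stays invertible under all these approximations, and the conclusion of \cref{premain2} follows.
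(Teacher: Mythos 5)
There is a genuine gap in your counting. When you apply Duarte's \cref{Duarte thm} along the $p_k$-direction to produce the wild horseshoe $\Lambda$, the Duarte parameters $p_n$ are parameters at which the $k$-th homoclinic tangency $\Gamma_k$ has already been \emph{broken} (the horseshoe lives inside the homoclinic tangle that develops after the tangency is destroyed, so $\Gamma_k$ and the Duarte horseshoe do not coexist at the same parameter). Consequently, at the parameter $p^{(0)}\in I_1$ where you find $\Lambda$ and $\Lambda'$, only the $k-1$ tangencies $\Gamma_1,\dots,\Gamma_{k-1}$ persist, \emph{not} $k$ of them. Your final paragraph, which claims "the $k$ original tangencies survive and continue to be unfolded by $p_1,\dots,p_k$," is therefore wrong, and the block-triangular Jacobian argument built on it does not stand as written. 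Your count yields $(k-1)+1 = k$ non-degenerately unfolded tangencies, not $k+1$. Relatedly, including "the orbit of $\Gamma_k$" in the compact set $C$ is not meaningful at parameter $p^{(0)}$, since that tangency orbit no longer exists.

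The missing tangency must come from $\Lambda$. You construct $\Lambda$ but never use its robust tangency — it only serves as a stepping stone to find $\Lambda'$ disjoint from the old data. The paper compensates for the loss of $\Gamma_k$ by creating \emph{two} new tangencies: it discards $\Gamma_k$, applies Duarte once to produce a new saddle $O'\neq O$ with a non-degenerately unfolded quadratic tangency $\Gamma'$ at $p'_0=p(t'_0)$ (with $\Gamma'\cup O'$ disjoint from $\bigcup_{1\le i<k}O\cup\Gamma_i$, note the strict inequality $i<k$), then applies \cref{main2casek1double} as a black box to the one-parameter subfamily $(X_{p(t)})_t$ to get two different tangency orbits at $O'$, both in a small neighborhood $U'$ of $\Gamma'\cup O'$, together with the new field $X_{k+1}$. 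This gives $(k-1)+2 = k+1$. After cutting off via \cref{GATcoro} with $K_+=\Gamma'\cup O'$ and $K_-=\bigcup_{1\le i<k}O\cup\Gamma_i$, and transporting the two $O'$-tangencies back to $O$ by the para-inclination lemma, the statement follows. To repair your argument you would need to explicitly realize a non-degenerately unfolded tangency of $\Lambda$ \emph{simultaneously} with one of $\Lambda'$ — as the paper's \cref{main2casek1double} does, by exploiting that $X_{k+1}$ is small on a neighborhood of $\Lambda$ (so its tangency persists under $p_{k+1}$) and close to $X_k$ near $\Lambda'$ (so $p_{k+1}$ unfolds $\Lambda'$'s tangency) — and transport both to $O$, rather than assert that $\Gamma_k$ survives.
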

\begin{proof}[Proof of \cref{premain2}]We proved the case $k=1$ in \cref{main2casek1double}. 	Now assume $k\ge 2$.

	By non-degeneracy of the unfolding,  there is a path $t\in \I \mapsto p(t)\in \I^k$  with $p(0)=0$ so that the family $(X_{p(t)})_{t\in \I}$  unfolds non-degenerately
	the $k^{th}$ homoclinic tangency $\Gamma_k$   but leaves  the other tangencies $\Gamma_i$ persistent for $1\le  i<k$.

Then we proceed similarly to the case $k=1$. For $t'_0$ arbitrarily small, at  $p_0'=p(t'_0)$, Duarte's theorem provides a hyperbolic set $\Lambda$ which is homoclinically related to $O$ and displays a robust quadratic homoclinic tangency which unfolds non-degenerately. Up to perturbing $p_0'$, we can assume that one of these new homoclinic tangencies is given by a periodic orbit $O'\neq O$.
Hence we obtain for $t'_0$ arbitrarily small,
the existence of a new orbit of quadratic  homoclinic tangency $\Gamma'$  to a periodic saddle $O'\neq O$ which unfolds non-degenerately at $p'_0=p(t'_0)$, and so that $O'$ is homoclinically related to $O$.

Observe that $\Gamma'\cup O'$ is a compact  set which is disjoint from $\bigcup_{1\le i< k} O \cup \Gamma_i$.
Observe that both $\Gamma'\cup O'$ and $\bigcup_{1\le i< k} O \cup \Gamma_i$ have connected complement in $\R^3$.
Let $U'$ and $U$ be small disjoint neighborhoods of respectively $\Gamma'\cup O'$ and $\bigcup_{1\le i < k}O\cup \Gamma_i$.
	
	We apply \cref{main2casek1double} to the one parameter family   $(X_{p(t)})_{t\in \I}$. It provides $p_0''=p(t''_0)\in \R^k$ close to $p_0'=p(t_0')$ (and so   small), 	a  vector field $X_{k+1}$, and $(t_0''', s_0)  \in \R^2$ arbitrarily close to $(t''_0, 0)$, such that
at 	$(t,s)= (t_0''', s_0)$, the family
	 $  (X_{p(t)}+s\cdot X_{k+1})_{(t,s)\in \R^2}$  unfolds non-degenerately two different orbits of homoclinic tangency at $O'$ that are contained in $U'$.
	
At $(p,s)=(p(t_0'''), s_0)$, the family $(X_{p}|_ {U\sqcup U'}+s\cdot  1\!\!1_{U'}\cdot X_{k+1})_{(p,s)\in \R^k\times \R} $  displays
$(k-1)$-orbits of quadratic tangency to $O$ and 2 orbits of homoclinic tangency to $O'$. They unfold non-degenerately when $(p,s)$ varies in $\R^{k+1}$.

%

Using then \cref{GATcoro} with $\Gamma'\cup O'$ and $\bigcup_{1\le i < k}O\cup\Gamma_i$ as respectively $K_+$ and $K_-$, and with $1\!\!1_{U'}\cdot X_{k+1}$ as Beltrami field on the neighborhood $U\sqcup U'$ of $K_+\sqcup K_-$, we deduce the existence of a Beltrami field $\tilde{X}_{k+1}\in\mathscr{B}(\R^3)$ arbitrarily close to $1\!\!1_{U'}\cdot X_{k+1}$ on $ U\sqcup U'$.
As having $(k+1)$-homoclinic tangencies which unfold non-degenerately is an open condition on families, this property persists. Up to rename $\tilde X_{k+1}$ as $X_{k+1}$, that is why we can assume $X_{k+1}$ defined on $\R^3$.

Finally we use again the (para)-inclination lemma 
 and the fact that $O$ and $O'$ are homoclinically related to deduce from the  non-degenerate unfolding of the two quadratic homoclinic tangencies to  $O'$,  that two new quadratic homoclinic tangencies to $O$   are obtained   together with the $(k-1)$-previous quadratic homoclinic tangencies to $O$, and they  unfold non-degenerately.
\end{proof}

\begin{appendix}
	
\section*{Appendices}

\section{Proof of \cref{prop:Meln:smooth}: right  inverse of the  Melnikov operator}\label{sec:Melnikov}

The aim of this appendix is to construct an inverse of the Melnikov operator for a vector field $X\in\Gamma^\infty_\leb(U)$ displaying a strong heteroclinic link in $U$, for some open subset $U$ of $\R^3$.\cn

\subsection{Displacement and Melnikov operators}\label{section Melnikov}
The aim of this section is to introduce the definitions of displacement operator, Melnikov operator and explain their relation.

Let $\Sigma\subset \R^2$ and let $\Omega$ be the standard symplectic, smooth 2-form on $\Sigma\subset\R^2$.
Let $f\in C^\infty(\Sigma,\R^2)$ be a smooth, symplectic diffeomorphisms from $\Sigma$ onto its image. We assume that $f$ displays two hyperbolic fixed point $p^+$, $p^-$ and a heteroclinic link $L$ from $p^-$ to $p^+$.
The \emph{displacement operator} will describe the unfolding of $L$ when we consider a perturbation of $f$.

Let $\cal N\subset C^\infty(\Sigma,\R^2)$ be a neighborhood of $0$. Assume that $\cal N$ is small enough such that, for every $\epsilon\in \cal N$, the diffeomorphism $f+\epsilon$ admits unique hyperbolic fixed points $p^\pm_\epsilon$ nearby $p^\pm$, called their hyperbolic continuations.
Let $\ell:\R\to L$ be a smooth parametrization of the heteroclinic connection $L$ such that
\[
\lim_{t\to\pm \infty}\ell(t)=p^\pm\quad\text{and}\quad f(\ell(t))=\ell(t+1).
\]
Fix local unstable and local stable manifolds $W^u_{loc}(p^-;f)$ and $W^s_{loc}(p^+;f)$, of respectively $p^-$ and $p^+$, which both contain $\ell([0,1])$.

For a fixed $\epsilon\in C^\infty(\Sigma,\R^2)$ small, consider the smooth immersed submanifold $W^s_{loc}(p^+_\epsilon; f+\epsilon)$. We can find a smooth diffeomorphism $\phi^s_\epsilon:W^s_{loc}(p^+; f)\to W^s_{loc}(p^+_\epsilon; f+\epsilon)$.
Similarly, we find a smooth diffeomorphism $\phi^u_\epsilon:W^u_{loc}(p^-; f)\to W^u_{loc}(p^-_\epsilon; f+\epsilon)$. By Irwin's proof \cite[Theorem 28]{Ir72} of the stable/unstable manifold theorem (which is based on the implicit function theorem), both $\phi^s_\epsilon$ and $\phi^u_\epsilon$ can be chosen so that they depend smoothly on $\epsilon$.
Up to a smooth reparametrization, we can assume that
\begin{equation}\label{commute:dynamics}
(f+\epsilon)\phi^u_\epsilon(\ell(0))=\phi^u_\epsilon(\ell(1))\qquad\text{and}\qquad (f+\epsilon)\phi^s_\epsilon(\ell(0))=\phi^s_\epsilon(\ell(1))\, .
\end{equation}
We define, for $t\in[0,1]$,\footnote{Equivalently, we can express the displacement function as the vector product $\partial_t\ell(t)\wedge (\phi^u_\epsilon(\ell(t))-\phi^s_\epsilon(\ell(t)))$.}
\begin{equation}\label{displacemtn function}
\mathrm{displ}(\epsilon)(t)=\Omega(\partial_t\ell(t),\phi^u_\epsilon(\ell(t))-\phi^s_\epsilon(\ell(t))).
\end{equation}

Roughly speaking, $\mathrm{displ}(\epsilon)(t)$ is the algebraic distance, along an affine line passing through $\ell(t)$ and normal to $L$, between the perturbed local stable and unstable manifolds. See \cref{figDispl}.

\begin{figure}[h]
	\centering
	\includegraphics[scale=0.7]{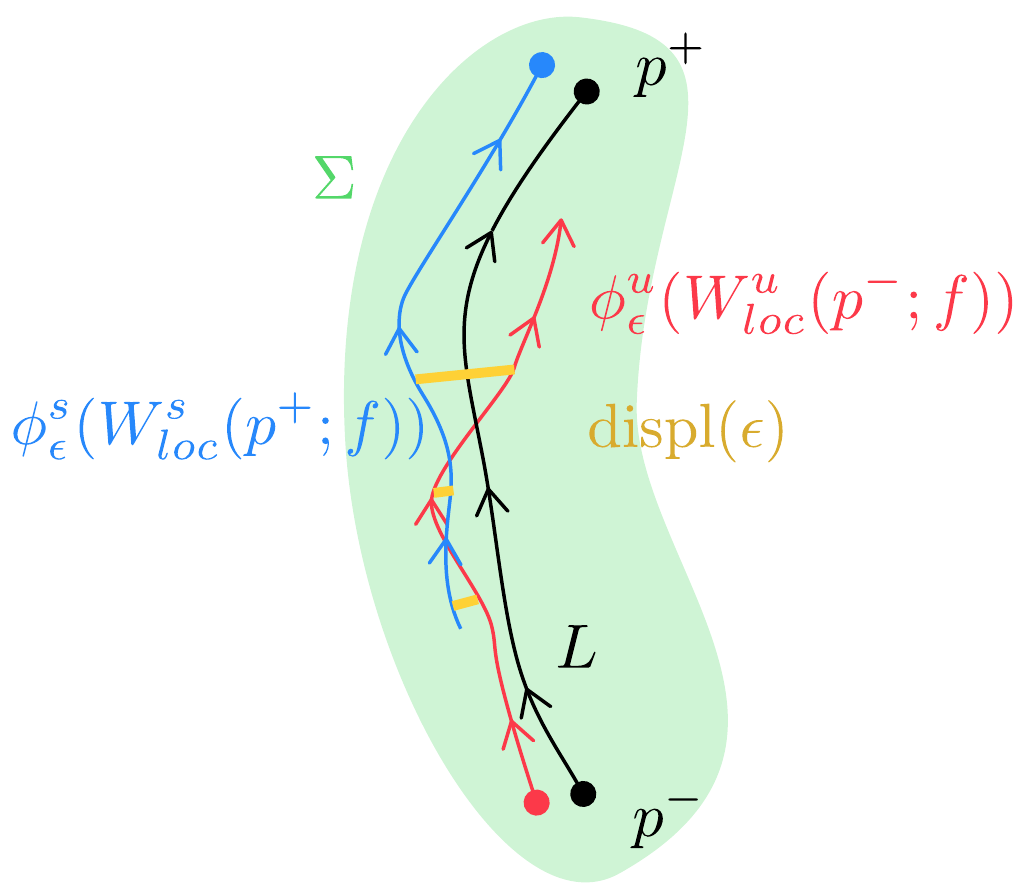}
	\caption{The displacement function is the algebraic distance between perturbed local stable and unstable manifolds.}
	\label{figDispl}
\end{figure}

By \cref{commute:dynamics}, $\phi^u_\epsilon(\ell([0,1]))$ and $\phi^s_\epsilon(\ell([0,1]))$ are fundamental domains. Then, the function $\mathrm{displ}(\epsilon)$ vanishes if and only if $\phi^s_\epsilon(\ell(t))= \phi^u_\epsilon(\ell(t))$ for every $t\in[0,1]$.  Thus, the heteroclinic link persists if and only if $\mathrm{displ}(\epsilon)\equiv 0$.

\begin{definition}
	The \emph{displacement operator} is the following map from a neighborhood $\cal N$ of $0$ in $C^\infty(\Sigma,\R^2)$:
	\[
	\mathrm{displ}:\cal N\to C^\infty([0,1],\R)
	\]
	\[
	\epsilon\mapsto \mathrm{displ}(\epsilon),
	\]
	where the function $\mathrm{displ}(\epsilon)$ is defined in \cref{displacemtn function}.
\end{definition}

\begin{remark}
	The displacement operator is not uniquely defined: it depends on the parametrization $\ell$ of $L$ and on the families $(\phi^u_\epsilon)_\epsilon$ and $(\phi^s_\epsilon)_\epsilon$.
\end{remark}

We recall that $\phi^s_\epsilon$ and $\phi^u_\epsilon$ depend smoothly on $\epsilon$, so the following proposition holds.

\begin{proposition}
	The map $\mathrm{displ}:\cal N\to C^\infty([0,1],\R)$ is smooth.
\end{proposition}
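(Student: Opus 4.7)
The plan is to express $\mathrm{displ}$ as a composition of smooth (in the Fr\'echet sense) maps with a continuous linear operator. The smooth input is the dependence of the local invariant manifolds on the perturbation, which is already asserted in the excerpt via Irwin's argument; the rest is essentially checking that pullback and pointwise algebraic operations preserve smoothness on spaces of $C^\infty$-maps.

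First, I would unpack the assertion that $\epsilon \mapsto \phi^s_\epsilon$ and $\epsilon \mapsto \phi^u_\epsilon$ depend smoothly on $\epsilon$. Irwin's graph-transform argument combined with the implicit function theorem works at each $C^k$ regularity level ($k\geq 1$) in the Banach space $C^k(\Sigma,\R^2)$, and produces a solution that is compatible across levels. Concretely, for every $k$, shrinking $\cal N$ if necessary, the map $\epsilon \in \cal N \mapsto \phi^s_\epsilon \in C^k(W^s_{loc}(p^+;f),\R^2)$ is smooth in the $C^k$-norm, and the same $\phi^s_\epsilon$ is obtained for every $k$. Since a map into a Fr\'echet space which is the projective limit of Banach spaces is smooth iff each of its projections is smooth, the map $\Phi^s : \epsilon \mapsto \phi^s_\epsilon$ is smooth as a map $\cal N \to C^\infty(W^s_{loc}(p^+;f),\R^2)$, and the analogous statement holds for $\Phi^u$.

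Next, I would use that pullback by a fixed smooth map is a continuous linear operator between $C^\infty$ function spaces, hence smooth. The parametrization $\ell|_{[0,1]} : [0,1] \to L$ is a fixed smooth map, so composing $\Phi^u$ and $\Phi^s$ with pullback by $\ell|_{[0,1]}$ and subtracting, the map
\[
\Psi : \epsilon \in \cal N \mapsto \phi^u_\epsilon \circ \ell - \phi^s_\epsilon \circ \ell \in C^\infty([0,1],\R^2)
\]
is smooth.

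Finally, $\Omega$ is a fixed (constant) bilinear form on $\R^2$ and $t \mapsto \partial_t\ell(t)$ is a fixed element of $C^\infty([0,1],\R^2)$, so
\[
\Lambda : W \in C^\infty([0,1],\R^2) \mapsto \Omega(\partial_t\ell,W) \in C^\infty([0,1],\R)
\]
is a continuous linear map between Fr\'echet spaces, and therefore smooth. Since $\mathrm{displ} = \Lambda \circ \Psi$ is a composition of smooth maps, it is smooth. The only slightly delicate point is the first step, namely lifting the Banach-level smoothness delivered by Irwin's implicit function theorem to Fr\'echet-smoothness; this is handled by the projective-limit characterization of the $C^\infty$ topology and is the only non-formal input in the argument.
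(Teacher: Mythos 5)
Your argument is correct and follows essentially the same route as the paper: the paper justifies the proposition in one line by recalling that $\phi^s_\epsilon$ and $\phi^u_\epsilon$ depend smoothly on $\epsilon$ (via Irwin's implicit-function-theorem proof of the stable manifold theorem), and your proposal simply spells out why this smooth dependence, combined with pullback by the fixed parametrization $\ell$ and the continuous linear pairing against the constant form $\Omega(\partial_t\ell(\cdot),\cdot)$, yields Fr\'echet-smoothness of $\mathrm{displ}$. The projective-limit observation you flag as the only non-formal step is exactly the content the paper leaves implicit.
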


The distance between the perturbed stable and unstable manifolds can be studied through the so-called Melnikov functions. We present here the main definitions and properties and refer for detailed proofs to \cite{LMRR}, \cite{DRR}, \cite{DRR2}, \cite{GPB}; the historical references are \cite{Poi} and \cite{Mel}.

\begin{definition}\label{def Melnikov}
	Let $f\in C^\infty(\Sigma,\R^2)$ be a symplectic diffeomorphism, which displays a heteroclinic link $L$, parametrized by $\ell:\R\to L$. The \emph{Melnikov function} associated to a smooth family $r\mapsto \epsilon(r)\in C^\infty(\Sigma,\R^2)$, with $\epsilon(0)=0$, is: 
	\begin{equation}\label{integral Melnikov}
	\begin{aligned}\mathcal{M}(\epsilon(s)):&\ \R\to \R \\
	t\mapsto \cal{M}(\epsilon(s))(t ):=&\,\sum_{k\in\Z}\Omega\left( \partial_t\ell(t), (f^*)^k(\partial_r\epsilon(r)|_{r=0}\circ f^{-1})\circ \ell(t)\right)\, ,\end{aligned}
	\end{equation}
	where $\partial_r\epsilon(r)|_{r=0}:=\lim_{r\to 0}\frac{\epsilon(r)}{r}$.
\end{definition}

\begin{remark}
	In \cref{integral Melnikov} the presence of $\partial_r\epsilon(r)|_{r=0}\circ f^{-1}$ is due to the fact that, for the given family $(f+\epsilon(r))_r$, it is the vector field $X=\partial_r\epsilon(r)|_{r=0}\circ f^{-1}$ which satisfies $\frac{d}{dr}(f+\epsilon(r))|_{r=0}=X\circ f$. See \cite[Section 2]{LMRR}.
\end{remark}

Recall that $f$ is symplectic and, since $f(\ell(t))=\ell(t+1)$, $\partial_t\ell(t)= Df^{-k}(\ell(t+k))\partial_t\ell (t+k)$. Thus, it holds
\begin{equation}\label{formula melnikov}
\cal{M}(\epsilon(s))(t)=\sum_{k\in\Z}\Omega\left( \partial_t\ell(t+k), \partial_s\epsilon|_{s=0}(\ell(t+k-1))\right).
\end{equation}

Note that the Melnikov function belongs to the space $C^\infty(\R/\Z,\R)$ of smooth $1$-periodic functions.

\begin{definition}\label{def Melnikov operator}
	Let $f$ be a symplectic diffeomorphism in $C^\infty(\Sigma,\R^2)$ displaying a heteroclinic link $L$, parametrized by $\ell:\R\to L$. The \emph{Melnikov operator} is a linear and continuous map 
	\[
	\cal{M}:C^\infty(\Sigma,\R^2)\ni \epsilon\mapsto \cal M(\epsilon)\in C^\infty(\R/\Z,\R)\, ,
	\]
	where $\cal M(\epsilon)$ is the Melnikov function with respect to the smooth family $r\mapsto r\epsilon$.
\end{definition}
\begin{remark}\label{remark just restriction}
	Observe that $\cal M(\epsilon)$ depends only on $\epsilon|_L$. Thus, we can define $\cal M(\epsilon)$ for $\epsilon\in C^\infty(L,\R^2)$.
\end{remark}

\begin{theorem}[Poincar\'e-Melnikov, \cite{Poi}-\cite{Mel}]\label{Melnikov}
	Let $f$ be a symplectic diffeomorphism in $C^\infty(\Sigma,\R^2)$, displaying a heteroclinic link $L$, parametrized by $\ell:\R\to L$. Then, the partial differential of the displacement operator at $0$ is equal to the Melnikov operator at $\epsilon$, i.e.
	\begin{equation}
	D_0\mathrm{displ}=\cal{M}.
	\end{equation}
	Thus, for any $r\geq 1$, in the $C^r$-uniform norm it holds
	\[
	\mathrm{displ}(\epsilon)=	\mathrm{displ}(0)+	D_0\mathrm{displ}(\epsilon)+O(\epsilon^2)=\cn\cal{M}(\epsilon)+O(\epsilon^2).
	\]
	
\end{theorem}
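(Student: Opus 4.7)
\bigskip\noindent\textbf{Proof proposal for Theorem \ref{Melnikov}.}
The plan is to differentiate $\mathrm{displ}$ at $\epsilon=0$, identify the derivative with an explicit infinitesimal vector field along $L$, and then unfold this vector field into a two-sided sum using a telescoping argument based on symplecticity. First I would set $v^u(t):=\partial_\epsilon\phi^u_\epsilon(\ell(t))\big|_{\epsilon=0}$ and $v^s(t):=\partial_\epsilon\phi^s_\epsilon(\ell(t))\big|_{\epsilon=0}$, so that
\[
D_0\mathrm{displ}(\dot\epsilon)(t)\;=\;\Omega\bigl(\partial_t\ell(t),\,v^u(t)-v^s(t)\bigr).
\]
Differentiating the conjugation $(f+\epsilon)\phi^{u/s}_\epsilon(\ell(t))=\phi^{u/s}_\epsilon(\ell(t+1))$ in~\eqref{commute:dynamics} (which, by uniqueness of the stable/unstable laminations, extends from $t=0$ to all $t\in\R$) at $\epsilon=0$, using $\phi^{u/s}_0=\mathrm{id}$, produces the common linear recursion
\[
v^\bullet(t+1)\;=\;Df(\ell(t))\,v^\bullet(t)\;+\;\dot\epsilon(\ell(t)),\qquad \bullet\in\{u,s\}.
\]
This is the only point where the two manifolds enter differently: the boundary condition that singles out $v^u$ is $v^u(t)\to 0$ as $t\to-\infty$, and for $v^s$ it is $v^s(t)\to 0$ as $t\to+\infty$. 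Both are consequences of hyperbolicity of $p^\pm$ together with the fact that we normalize the families $(\phi^{u/s}_\epsilon)_\epsilon$ so that $\phi^{u/s}_\epsilon(p^\pm)=p^\pm_\epsilon$, the continuation being differentiable in $\epsilon$ by Irwin's proof already invoked in the text.

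Next I would exploit symplecticity of $f$ together with $\partial_t\ell(t+1)=Df(\ell(t))\partial_t\ell(t)$ to telescope. A direct computation gives, for each $t$,
\[
\Omega(\partial_t\ell(t),v^u(t))-\Omega(\partial_t\ell(t-1),v^u(t-1))\;=\;\Omega(\partial_t\ell(t),\dot\epsilon(\ell(t-1))),
\]
and similarly shifted for $v^s$. Summing the $v^u$ identity over the past (using $v^u(t)\to 0$ at $-\infty$) yields
\[
\Omega(\partial_t\ell(t),v^u(t))\;=\;\sum_{k\le 0}\Omega\bigl(\partial_t\ell(t+k),\dot\epsilon(\ell(t+k-1))\bigr),
\]
while summing the $v^s$ identity forward (using $v^s(t)\to 0$ at $+\infty$) produces
\[
-\Omega(\partial_t\ell(t),v^s(t))\;=\;\sum_{k\ge 1}\Omega\bigl(\partial_t\ell(t+k),\dot\epsilon(\ell(t+k-1))\bigr).
\]
Adding the two series recovers exactly the expression of $\cal M(\dot\epsilon)(t)$ given by~\eqref{formula melnikov}, which completes the identification $D_0\mathrm{displ}=\cal M$. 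The second assertion of the theorem is then a standard Taylor estimate for a smooth map between Fr\'echet spaces.

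The technical obstacle is the convergence and vanishing at infinity. The naive series defining $v^u$ involves $Df^{k}$ iterated along $\ell(t-k)$, which expands in the stable direction as $k\to\infty$; however the quantities we actually need are the $\Omega$-pairings with $\partial_t\ell(\cdot)$, which is tangent to the invariant manifold and along which $Df$ acts contractively toward $p^-$ in backward time. I would make this quantitative by working in hyperbolic charts near $p^\pm$ that diagonalize $Df$ up to exponentially small errors and by writing $\dot\epsilon\circ\ell$ in its components along and transverse to the weak direction; the pairing with $\partial_t\ell$ kills the tangential terms and the transverse terms decay geometrically, ensuring absolute convergence of both one-sided sums and therefore of the full Melnikov series. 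Once this hyperbolic estimate is in place, the telescoping identity upgrades pointwise to convergence in every $C^r$-seminorm, which is what the last displayed inequality of the statement requires.
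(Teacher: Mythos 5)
The paper does not give a proof of this theorem; it defers to the references \cite{DRR} and \cite{GPB}, so there is no internal proof to compare against. Evaluated on its own terms, your telescoping argument is the standard Poincar\'e--Melnikov computation and its core is correct: differentiating \eqref{commute:dynamics} at $\epsilon=0$ gives the affine recursion $v^\bullet(t+1)=Df(\ell(t))v^\bullet(t)+\dot\epsilon(\ell(t))$; combining it with $\partial_t\ell(t+1)=Df(\ell(t))\partial_t\ell(t)$ and the symplecticity of $f$ yields exactly the telescoping identity you write; and summing over the two half-lines reconstructs the two-sided series of \eqref{formula melnikov}, so $D_0\mathrm{displ}=\cal M$.

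There is, however, a genuine error in the way you justify the boundary behaviour and the convergence. You claim that the boundary condition singling out $v^u$ is $v^u(t)\to 0$ as $t\to-\infty$ (and $v^s(t)\to 0$ as $t\to+\infty$). Under the paper's normalization $\phi^u_\epsilon(p^-)=p^-_\epsilon$ this is false: one gets $v^u(t)\to\partial_\epsilon p^-_\epsilon|_{\epsilon=0}$, the velocity of the continuation of the saddle, which is generically nonzero. What the telescoping actually requires is only that $\Omega(\partial_t\ell(t-N),v^u(t-N))\to 0$, and this holds for a different reason: $\partial_t\ell(t)\to 0$ exponentially as $t\to\pm\infty$ (the parametrization slows down as $\ell$ approaches the hyperbolic fixed points, since $f(\ell(t))=\ell(t+1)$) while $v^u$ stays bounded. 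The same correction applies to $v^s$ at $+\infty$. Your final paragraph compounds the issue: the proposed decomposition of $\dot\epsilon\circ\ell$ into tangential and transverse components in hyperbolic charts, with the assertion that the transverse terms decay geometrically, is not justified --- $\dot\epsilon$ is an arbitrary smooth perturbation and its transverse component along $L$ has no reason to decay. The correct and much simpler argument is the one just indicated: $\partial_t\ell(t+k)$ decays exponentially in $|k|$ and $\dot\epsilon$ is bounded on the compact $\bar L$, which already yields absolute convergence of the two one-sided sums and of the full Melnikov series, uniformly in every $C^r$-seminorm. With those two reasoning steps repaired the proof goes through; the final Taylor remark for the $O(\epsilon^2)$ estimate is fine.
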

For a detailed proof, we refer for example to \cite{DRR} and \cite{GPB}.

\subsection{Inverse Melnikov operator for flows in $\R^3$}\label{subsec:Meln:inverse}
Let $U$ be an open subset of $\R^3$. Let $X\in \Gamma^\infty_\leb(U)$ display a \emph{strong heteroclinic link} $\Gamma\subset U$ between two saddle periodic orbits $\gamma^\pm\subset U$. In particular, the closure of the link $\bar\Gamma$ is diffeomorphic to $\R/\Z\times[-1,1]$.

Let $\bar L$ be the closure of the half strong stable manifold of a point in $\gamma^+$ in $\Gamma$. Let $\hat \Sigma\subset \R^3$ be a disk which contains $\bar L$ and that intersects transversally $\Gamma$.
Up to reducing $\hat \Sigma$, it is   transverse to the vector field $X$. Let $\Sigma\Subset \hat \Sigma$ be a neighborhood of $L$ in $\hat\Sigma$ such that for every point $z \in \Sigma$, there exists $\tau(z)>0$ minimal such that the orbit of $z$ intersect transersally $\hat \Sigma$ at time $\tau(z)$ at a point $f(z)$. From the definition of strong heteroclinic link, the first return time $\tau|_{ \Sigma\cap\Gamma}$ is constant, in particular $\gamma^+$ and $\gamma^-$ have the same period. Indeed, since $\Sigma\cap\Gamma=L$ is a half strong stable manifold of a point in $\gamma^+$, it is $T(\gamma^+)$-periodic, where $T(\gamma^+)$ denotes the period of $\gamma^+$ (similarly, for a half strong unstable manifold of a point if $\gamma^-$).
The disks $\Sigma\subset \hat \Sigma$ are called Poincar\'e sections and $f$ is  called a Poincar\'e first return map.  The maps $f$ and $\tau$ are in, respectively, $C^\infty(\Sigma,\hat \Sigma)$ and $ C^\infty(\Sigma,\R^+)$. Moreover, it is standard that there exists a smooth symplectic form $\Omega$ in $\hat\Sigma$ which is invariant by the action of $f|_\Sigma$. By Darboux's Theorem, we can identify $\Sigma$ with a disk of $\R^2$ so that $\Omega$ is the standard symplectic form.

Let $W\in\Gamma^\infty_\leb(U)$ be a small perturbation of $X$: it induces a small perturbation $\epsilon \in C^\infty(\Sigma,\R^2)$ of the Poincar\'e map $f$. So we can study the unfolding of $\Gamma$ using the tools of Melnikov operator for the heteroclinic link $L=\hat \Sigma\cap\Gamma$ presented above.

Fix a parametrization $\ell:\R\to L$ of $L$, such that $\lim_{t\to\pm\infty}\ell(t)=p^\pm$, where $p^\pm:=\hat \Sigma\cap \gamma^\pm$, and $f(\ell(t))=\ell(t+1)$. Fix also $(\phi^s_\epsilon)_\epsilon$ and $(\phi^u_\epsilon)_\epsilon$ as in \ref{section Melnikov}. Let $\cal M$ be the Melnikov operator associated to this setting.
\begin{definition} The \emph{Melnikov operator} is the following linear and continuous map from a neighborhood $\cal N$ of $0$ in $\Gamma^\infty_\leb(U)$
	\[ \cal{M}: \cal N\ni W\mapsto
	\cal M(W):=\cal{M}(\epsilon)\in C^\infty(\R/\Z,\R)\, ,\]
	where $\epsilon\in C^\infty(\Sigma,\R^2)$ is the perturbation of the first return map $f$ associated to the vector field $W$.
\end{definition}

\begin{remark}
	Observe that, since $\cal M(\epsilon)$ depends only on $\epsilon|_L$ (see \cref{remark just restriction}), and so on $W|_\Gamma$, the operator $\cal M$ can be defined for $W\in C^\infty(\Gamma,\R^3)$.
\end{remark}

The aim of this subsection is to give the proof of \cref{prop:Meln:smooth}: such a proposition gives an inverse operator of the Melnikov one.

\begin{proof}[Proof of \cref{prop:Meln:smooth}]
		We recall that $\Gamma$ is an invariant cylinder smoothly embedded in $\R^3$, and that
		$L\subset \Gamma$ is a line transverse to the flow whose return time is a constant $T>0$.
As the statement is invariant by reparametrizing the flow, we can assume that $T=1$.
		
This defines a canonical  diffeomorphism	$\Gamma\to L\times \R/\Z$ which maps $L$ to $L\times \{0\}$. Recall that $\ell:\R\to L$ is a parametrization of $L$ such that $f\circ\ell(t)=\ell(t+1)$  for every $t\in\R$, where $f$ is the first return map to $  \Sigma$. Then using $\ell$, we identify  $\Gamma$ to $\R\times\R/\Z$ and $L$  to $\R\times \{0\}$:
\[ \Gamma\equiv \R\times\R/\Z \qand L\equiv \R\times \{0\}
\; ,\]
such that, with  $p_2: \Gamma \to \R/\Z$ the  second coordinate projection, the flow $(\Phi^t_X)_{t\in\R}$ of   $X$ satisfies:
	\[
	p_2\circ \Phi^t_X(s,\theta)= \theta+t\ \mod 1\; , \quad \forall (s,\theta)\in  \R\times\R/\Z\equiv \Gamma\, .
	\]
	
	\begin{lemma}\label{prop Psi}
		There is a compactly supported function $\Psi\in C^\infty(\R\times\R/\Z,\R^+)$ such that:
		\begin{equation*}
		\int_\R\Psi\circ\Phi^t_X(s,0)dt=1\;, \quad \forall s\in \R.
		\end{equation*}
	\end{lemma}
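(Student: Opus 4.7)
The strategy is to produce a smooth ``time function'' $\tau\in C^\infty(\Gamma,\R)$ with $\tau\circ\Phi^t_X=\tau+t$ and then take $\Psi:=b\circ\tau$ for some bump $b\in C^\infty_c(\R,\R^+)$ with $\int_\R b=1$. The change of variable $u=\tau(z)+t$ then gives
\[
\int_\R\Psi\bigl(\Phi^t_X(z)\bigr)\,dt=\int_\R b\bigl(\tau(z)+t\bigr)\,dt=\int_\R b(u)\,du=1
\]
for every $z\in\Gamma$, in particular for $z=(s,0)\in L$. Moreover $\Psi\in C^\infty(\Gamma,\R^+)$ and is compactly supported as soon as $\tau$ is proper.

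To build $\tau$, I invoke the structural observation recorded right after the definition of strong heteroclinic link: the flow of $X$ on $\Gamma$ is smoothly conjugate to the product of a rotation on $\R/\Z$ with a gradient flow on an open interval $I$ whose endpoints correspond to $\gamma^\pm$. The one-dimensional gradient flow on $I$ is strictly monotone with no interior fixed points, so the flow-time from a fixed reference point is a smooth diffeomorphism $I\to\R$ conjugating the gradient flow to $y\mapsto y+t$; the divergence of the flow-time at the endpoints reflects the fact that they are hyperbolic fixed points of the reduced 1D dynamics. Composing with the identity on the rotation factor yields a smooth diffeomorphism $\phi:\Gamma\to\R/\Z\times\R$ that conjugates $\Phi^t_X$ to the translation flow $(x,y)\mapsto(x+t\bmod 1,\,y+t)$. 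Then $\tau:=p_2\circ\phi$, with $p_2:\R/\Z\times\R\to\R$ the projection onto the $\R$-factor, satisfies $\tau\circ\Phi^t_X=\tau+t$, and $\tau$ is proper because $\tau^{-1}([-M,M])=\phi^{-1}(\R/\Z\times[-M,M])$ is the diffeomorphic preimage of a compact product.

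The only non-formal step is the existence of the smooth conjugacy $\phi$; everything else reduces to the properness of $\tau$ and the translation invariance of Lebesgue measure on $\R$. That conjugacy repackages the product structure of a strong heteroclinic link together with the smooth reparametrization of a 1D gradient flow on an open interval (with hyperbolic endpoints) as the translation flow on $\R$.
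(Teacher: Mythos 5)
Your proof is correct but follows a genuinely different route from the paper's. You build a smooth time function $\tau\in C^\infty(\Gamma,\R)$ with $\tau\circ\Phi^t_X=\tau+t$ from a smooth conjugacy of $\Phi^t_X|_\Gamma$ to the translation flow on $\R/\Z\times\R$, then set $\Psi=b\circ\tau$ for a normalized bump $b$; the verification is then a one-line change of variables and the compact support comes from properness of $\tau$. The paper instead starts from an \emph{arbitrary} compactly supported nonnegative $\Psi_0$ positive on $[\tfrac12,\tfrac32]\times\{0\}$, forms the orbit integral $C(s,\theta):=\int_\R\Psi_0\circ\Phi^t_X(s,\theta)\,dt$ (smooth, positive, $\Phi^t_X$-invariant), and sets $\Psi:=\Psi_0/C$; because $C$ is constant along each orbit, $\int_\R\Psi\circ\Phi^t_X(s,0)\,dt=C(s,0)/C(s,0)=1$. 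The paper asserts additionally that $C$ is a global constant on the grounds that $\Phi^t_X|_\Gamma$ has no nontrivial continuous first integral; that latter claim is actually false (the $\Phi^t_X$-invariant map $\varphi$ they introduce immediately afterwards is such a first integral), but this is harmless: orbit-invariance, smoothness and positivity of $C$ are all the argument needs. What each route buys: the paper's normalization avoids building a time function or a conjugacy and only needs the invariance of the orbit integral; your route is structurally cleaner, makes the translation-flow structure explicit, and sidesteps the ``$C$ constant'' discussion entirely. The one gap you should fill: the remark in the paper says the dynamics on a strong heteroclinic link is ``conjugate'' to a rotation-times-gradient product, without asserting regularity, whereas you need the conjugacy (and hence $\tau$) to be $C^\infty$; this holds because the strong stable/unstable foliations inside the stable/unstable surface of a hyperbolic periodic orbit of a $C^\infty$ flow are themselves $C^\infty$, but you should state this or cite it rather than treat it as automatic.
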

	\begin{proof}
		Let $\Psi_0\in C^\infty(\R\times\R/\Z,\R^+)$ be a smooth, compactly supported, non-negative  function
		 which is positive on $[\tfrac12, \tfrac32]\times \{0\}$. For every $(s,\theta)\in \R\times\R/\Z\equiv \Gamma$ define
		\[
		C(s,\theta):=\int_\R\Psi_0\circ\Phi^t_X(s,\theta)dt\, .
		\]
Note that every orbit intersects $[\tfrac12, \tfrac32]\times \{0\}$, so  $C$ is positive. Also the function $C$  is $\Phi_V^t$-invariant and smooth. Actually, $C(s,\theta)$ is equal to some constant $C$, because $\Phi^t_X|_{\Gamma}$ does not have any non trivial continuous first integral.
		Define then the function $\Psi\in C^\infty(\R\times \R/\Z, \R^+)$ as
		\[
		(s,\theta)\in\R\times\R/\Z\mapsto \Psi(s,\theta):=\dfrac{\Psi_0(s,\theta)}{C}\in \R^+\, .
		\]
		It is obvious that the mean  of $\Psi$  along the orbits is one, 		as desired.
	\end{proof}

Note that for every $(s,\theta)\in \R\times \R/\Z$, the orbit of $(s,\theta)$ intersects $L$ at a set $(\varphi(s,\theta)+\Z)\times \{0\}$.   This defines a $\Phi_X^t$-invariant map:
\[\varphi :  \R\times \R/\Z\to \R/\Z.\]
Note that  for every $s,t\in\R$ it holds
	\begin{equation}\label{def:varphi} 	\varphi\circ\Phi^t_X(s,0)=s \mod 1\, .
	\end{equation}
We recall that $\Sigma$ is a Poincar\'e section containing $L$ and transverse to $\Gamma$ and that $\Omega$ denote an invariant  symplectic form for the first return map $f$.
Let $N:\R\times\R/\Z\equiv\Gamma\to \R^3$ be a  vector field such that: \cn
\begin{equation}\label{def N}  T\Gamma\oplus \R N= \R^3\qand \Omega(\partial_t \ell(s),  N_\Sigma\circ \ell(s))= 1\quad \forall s\in \R \; ,\end{equation}
 where $N_\Sigma$ denotes the projection of $N|_\Gamma$ into the section $\Sigma$.

		Now for every  $g\in C^\infty(\R/\Z,\R)$ we define  the smooth vector field $\cal I (g)\in C^\infty(\R\times\R/\Z,\R^3)$ as:
	\begin{equation}\label{def of O}\cal I (g):= (s , \theta)\in \Gamma  \mapsto  g\circ\varphi  (s,\theta)\cdot \Psi(s,\theta)\cdot N(s, \theta)\in\R^3.\end{equation}
	
	We shall prove that the operator which associates to  $g$ the Melnikov function of $\cal I (g)$ is the identity. In order to do this, let us  go back to the two dimensional setting.
	Let $\epsilon(g)\in C^\infty(\Sigma, \R^2)$ be the perturbation induced by the vector field $X+\cal I(g)$ on the first return map $f$. Consider the smooth family $r\mapsto \epsilon(rg)$. Then the partial derivative $\partial_r\epsilon|_{r=0}$ is:
\[\partial_r\epsilon(rg)|_{r=0}=\lim_{r\to 0} \frac1r \epsilon(r\cdot g) \, .
	\]
	
	\begin{lemma}\label{lemma Omega} For every $s\in\R$
		it holds:
		\[
		\Omega(\partial_t\ell(s+1), \partial_r\epsilon(rg)|_{r=0}\circ \ell (s)\cn)=g(s)\cdot  \left(\int_0^{1}\Psi\circ \Phi^t_X(s,0)dt\right)\, ,
		\]
		where $g$ is identified with a $1$-periodic real function.
	\end{lemma}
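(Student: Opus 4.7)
The plan is to combine the variation-of-constants formula for the first return map with the structural properties built into the definition of $\cal I(g)$.

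First, under the smooth family $r\mapsto X + r\cal I(g)$, the first-order variation of the Poincar\'e return map at $\ell(s)\in \Sigma$ is, modulo the flow direction $\R X(\ell(s+1))$ (which is annihilated when pairing with $\partial_t\ell(s+1)$ via $\Omega$, since $\partial_t\ell(s+1)\in T_{\ell(s+1)}\Sigma$), given by
\[
\partial_r\epsilon(rg)\big|_{r=0}(\ell(s)) \;\equiv\; \int_0^{1} D\Phi^{1-u}_X \cdot \cal I(g)(\Phi^u_X(\ell(s))) \, du,
\]
using the normalization $\tau|_L \equiv 1$. In the identification $\Gamma \equiv \R\times\R/\Z$, the orbit segment $\{\Phi^u_X(\ell(s)) : u\in[0,1)\}$ corresponds to $\{(s,u)\}$. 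Since $\varphi$ is a flow invariant with $\varphi(s,u)=s \bmod 1$, the definition of $\cal I(g)$ yields
\[
\cal I(g)(\Phi^u_X(\ell(s))) = g(s)\cdot \Psi(s,u)\cdot N(s,u).
\]

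Next, I would pair with $\partial_t\ell(s+1)$ via $\Omega$ and pull the scalars out of the integral:
\[
\Omega(\partial_t\ell(s+1),\, \partial_r\epsilon(rg)|_{r=0}(\ell(s))) = g(s)\int_0^1 \Psi(s,u)\, \Omega\bigl(\partial_t\ell(s+1),\, D\Phi^{1-u}_X N(s,u)\bigr)\, du.
\]
The desired conclusion then reduces to the single identity
\[
\Omega\bigl(\partial_t\ell(s+1),\, D\Phi^{1-u}_X N(s,u)\bigr) = 1 \qquad \forall u\in[0,1].
\]

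The main obstacle is establishing this last identity. It amounts to transporting the defining normalization $\Omega(\partial_t\ell, N_\Sigma)=1$ of $N$ backward along the flow by $D\Phi^{1-u}_X$ and checking that the pairing is preserved. This relies on the $X$-invariance of the transverse symplectic form, which in turn follows from the Liouville-preserving character of $X$ combined with the transversality of the nearby Poincar\'e sections $\Sigma_u := \Phi^{u-1}_X(\Sigma)$: the map $\Phi^{u-1}_X$ sends $\Sigma$ to $\Sigma_u$ and carries the pair $(\partial_t\ell(s+1), N_\Sigma(\ell(s+1)))$ at $\ell(s+1)$ to $(D\Phi^{u-1}_X\partial_t\ell(s+1), N(s,u))$ at $(s,u)$, so the symplectic pairing on transversals is preserved. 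Once this bookkeeping is complete, the integrand becomes $g(s)\Psi(s,u)$ and integrating over $u\in[0,1]$ gives the stated right-hand side $g(s)\int_0^1 \Psi\circ\Phi^u_X(s,0)\, du$.
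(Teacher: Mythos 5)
Your proposal takes a genuinely different route from the paper. The paper avoids the variation-of-constants (Duhamel) formula entirely: it writes $\epsilon(g)(z)$ directly as a difference of two flow integrals, passes to flow-box coordinates where $N\equiv(0,0,1)$, $\Sigma\cap V=\R\times\{0\}\times\R$ and the third component of $X$ vanishes identically, and observes that then only the $\int_0^1\cal I(g)\circ\Phi^t_X\,dt$ term contributes to the third component of $\epsilon(g)(z)$. Your approach instead pushes $\cal I(g)$ through $D\Phi^{1-u}_X$ and reduces to the identity
\[
\Omega\bigl(\partial_t\ell(s+1),\, D\Phi^{1-u}_X N(s,u)\bigr)=1\qquad\forall u\in[0,1].
\]
This is a cleaner conceptual organisation, but the justification you give for this identity is where the gap lies. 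You assert that $D\Phi^{u-1}_X$ carries $N_\Sigma(\ell(s+1))$ to $N(s,u)$ so that the pairing is "preserved along the flow". However, the definition \eqref{def N} of $N$ only requires transversality to $\Gamma$ and the normalization $\Omega(\partial_t\ell(s),N_\Sigma(\ell(s)))=1$ at points of $L$; it says nothing about how $N$ behaves off $L$ under $D\Phi^t_X$. What the invariance of $\iota_X\Leb$ actually gives is
\[
\Omega\bigl(\partial_t\ell(s+1),\, D\Phi^{1-u}_X N(s,u)\bigr)=\Leb\bigl(X(\ell(s)),\partial_t\ell(s),D\Phi^{-u}_X N(\Phi^u_X(\ell(s)))\bigr),
\]
and this equals $1$ only if $D\Phi^{-u}_X N(\Phi^u_X(z))\equiv N(z)\mod T\Gamma$. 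That is an extra constraint on $N$, not a consequence of the definition.

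The gap is fillable in two equivalent ways, and your proof needs one of them to be stated explicitly. First, one may choose $N$ flow-equivariant modulo $T\Gamma$ — this is exactly what the paper's flow-box choice accomplishes: with $N\equiv e_3$ and $X$ having zero third component, $DX$ has zero third row, so $D\Phi^t_X$ preserves the third component of vectors and hence fixes $e_3\mod T\Gamma$. Second, one may note that the left-hand side of the lemma is insensitive to replacing $N$ by $N+V$ with $V$ tangent to $\Gamma$: the extra Duhamel contribution pairs as $\iota_X\Leb\bigl(D\Phi^u_X\partial_t\ell(s),V\bigr)$, which vanishes because both arguments lie in $T\Gamma$ and $\iota_X\Leb|_{T\Gamma}=0$; so one may freely compute with the convenient flow-equivariant representative. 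Without one of these two observations, the central identity of your reduction is not established, and the proof is incomplete.
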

	
	\begin{proof}
A flow-box coordinate neighborhood $V$ of $\Gamma$ is diffeomorphic to $\Gamma\times (-1,1)$. This  diffeomorphism induces an identification of $V$ with $\R \times \R/\Z\times (-1,1)$ and can be chosen so that  $N$ is constantly equal to $(0,0,1)$, $\Sigma\cap V$ is identified with $\R\times\{0\}\times \R$ and $X$ has zero third component.

Let $X_g:= X+{\cal{I}}(g)$ and denote by $(\Phi_{X_g}^t)_{t\in\R}$ its  flow. Let $\tau_g:\Sigma\to\R$ be the first return time on $\Sigma$ associated to $X_g$. In the coordinates given by the identification, we have:
\[X_g(s,t)= X(s,\theta)+(0,0,  g\circ\varphi  (s,\theta)\cdot \Psi(s,\theta))\; ,\quad \forall (s,\theta)\in \R\times \R/\Z\; .\]

Let $z=\ell(s)$ and  express the perturbation $\epsilon(g)$ associated to $\cal I (g)$ as
		\[
		\epsilon(g)(z)=\left((f+\epsilon(g))(z)-z\right)-(f(z)-z)\,.\]
		In the coordinates of the tubular neighborhood of $\Gamma$ we have:
		\[(0,\epsilon(g)(z))=\int_0^{\tau_g(z)}(X+\cal I(g))\circ\Phi^t_{X+\cal I(g)}(z)dt-\int_0^{1}X\circ\Phi^t_X(z)dt\, ,
		\]
		where the coordinate $0$ is the coordinate in $\R/\Z$ (we have changed the ordering of the coordinates for the ease of notation).
		
		Hence for every $r\ge 1$:
\[ (0,\epsilon(g)(z))=
\int_1^{\tau_g(z)}X\circ\Phi^t_{X}(z)dt+
\int_0^{1} \cal I(g) \circ\Phi^t_{X}(z)dt+
\int_0^{1}\left(X\circ\Phi^t_{X+\cal I(g)}(z)-X\circ\Phi^t_X(z)\right)dt+
O(\|g\|^2_{C^r})\, ,\]
where we have used that $\int_1^{\tau_g(z)}X\circ\Phi^t_{X+\cal I(g)}(z) dt = \int_1^{\tau_g(z)} X\circ\Phi^t_X(z)dt+ O(\Vert g\Vert^2_{C^r})$.
Since, with $p_3: \R\times \R/\Z\times (-1,1)\to (-1,1)$ the third coordinate projection, it holds $p_3(X)=0$ on the neighborhood $V$, the first and third terms have their third components null while the second term has its first and second components null. Thus, it holds:
\[ p_3(0,\epsilon(g)(z))= p_3\left(
\int_0^{1} \cal I(g) \circ\Phi^t_{X}(z)dt\right)+
O(\|g\|^2_{C^r})\;
.\]
Now we plug \cref{def of O} to obtain:
  \[ p_3(0,\epsilon(g)(z))= p_3\left(
\int_0^{1} g\circ\varphi  \circ\Phi^t_{X}(z)\cdot \Psi \circ\Phi^t_{X}(z)\cdot N  \circ\Phi^t_{X}(z)dt\right)+
O(\|g\|^2_{C^r})\;
.\]
Now we use that $p_3\circ N\equiv  1$ and that, from \eqref{def N}, $\Omega(\partial_t \ell(s+1) , \epsilon(g)\circ \ell(s))=  p_3(0,\epsilon(g)(z))$, to obtain:
\begin{equation}\label{eq_melni} \Omega(\partial_t \ell (s+1), \epsilon(g) \circ \ell(s))=
\int_0^{1} g\circ\varphi  \circ\Phi^t_{X}(z)\cdot \Psi \circ\Phi^t_{X}(z)dt +
O(\|g\|^2_{C^r})\;
.\end{equation}
Using the invariance of $\varphi$ by the flow, we have
$ g\circ\varphi  \circ\Phi^t_{X}(z)=  g\circ\ell(s)$ for every $t$. Using Eq.\eqref{eq_melni} with the perturbation $\epsilon(rg)$, taking its derivative with respect to $r$ and evaluating at $r=0$ yield the sought equality.
%
	\end{proof}
	
	The Melnikov function associated to $\cal I(g)$ is
	\[
	\cal M(\cal I(g))(s)=\cal M(\epsilon(g))(s)=\sum_{k\in\Z}\Omega(\partial_t\ell(s+k),\partial_r\epsilon(rg)|_{r=0}(\ell(s+k-1))).
	\]
	By \cref{lemma Omega}, since the function $g$ is $1$-periodic and by \cref{prop Psi}, we deduce that
	\[
	\cal M(\cal I(g))(s)=g(s)\cdot\sum_{k\in\Z} \left(\int_0^{1}\Psi\circ\Phi^t_X(s+k,0)\,dt\right)
	\]
	and, since $\Phi^t_X(s+k,0)=\Phi^{t+k}_X(s,0)$ for every $k\in\Z$, we conclude that
	\[
	\cal M(\cal I(g))(s)= g(s)\cdot \int_\R \Psi\circ \Phi^t_X(s,0) dt = g(s)\, .
	\]
\end{proof}

\section{Proof of
\cref{cor:CK}: Cauchy-Kovalevskaya's Theorem for curl}\label{section CK and GAT}

\subsection{Application of Cauchy-Kovalevskaya's Theorem for curl}

	Let $\Sigma\subset\R^3$ be a bounded, oriented, analytic surface that can be analytically extended in a neighborhood. Given a vector field $W$ on $\Sigma$, we can decompose it as follows:
	\[
	W=W_T+(W\cdot N)N,
	\]
	where $N$ is a unit normal to $\Sigma$, while $W_T$ is the component of $W$ tangent to $\Sigma$. We refer to $W$ as the Cauchy datum for the Cauchy-Kovalevskaya theorem. Denote as $g:=W\cdot N$ the normal component of the Cauchy datum.
	
	We will be interested in $W$ satisfying the following condition:
	\begin{equation}\tag{$*$}\label{CK}
	d(W^\flat_T)=g\sigma,
	\end{equation}
	where $W^\flat_T$ is the 1-form dual to $W_T$, push-forwarded on $\Sigma$, and $\sigma$ is the area form on $\Sigma$ induced from the ambient space $\R^3$.
	The following   generalizes Theorem 3.1 in \cite{EncPS}.
	\begin{theorem}[Cauchy-Kovalevskaya's Theorem for curl]\label{CK:teo}
		Let $\Sigma\subset\R^3$ be a surface as above and let $W$ be the analytic Cauchy datum in $C^\omega(\bar\Sigma,\R^3)$. The equation
		\begin{align}
		\mathrm{curl }\, X &= X, \\
		X|_{\Sigma}&=W
		\end{align}
		has a unique, analytic solution in a neighborhood $\Omega$ of $\Sigma$ if and only if $W$ fulfills condition \eqref{CK}.
	\end{theorem}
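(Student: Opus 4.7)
The plan is to read condition \eqref{CK} as the compatibility condition that reduces the first-order overdetermined system $\mathrm{curl}\,X=X$ to a well-posed Cauchy problem for the vector Helmholtz equation, to which the classical Cauchy-Kovalevskaya theorem applies. For the \emph{necessity} direction, I would work in local adapted coordinates near a point $p\in\Sigma$ such that $\Sigma=\{x_3=0\}$ and the unit normal is $N=\partial_3$. The third component of $\mathrm{curl}\,X=X$ reads $\partial_1X_2-\partial_2X_1=X_3$, whose restriction to $\Sigma$ is precisely \eqref{CK} in these coordinates, since the left-hand side at $\Sigma$ equals $d(W_T^\flat)/\sigma$ and the right-hand side equals $g$.

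For \emph{sufficiency}, the key observation is that any solution of $\mathrm{curl}\,X=X$ is automatically divergence-free, so
\[
-\Delta X \;=\; \mathrm{curl}\,\mathrm{curl}\,X-\nabla\mathrm{div}\,X \;=\; \mathrm{curl}\,X \;=\; X\,,
\]
and hence each component solves the scalar analytic equation $(\Delta+1)X_i=0$, for which $\Sigma$ is non-characteristic. To invoke Cauchy-Kovalevskaya one must prescribe a real-analytic Cauchy datum $(X|_\Sigma,\partial_N X|_\Sigma)$. I would set $X|_\Sigma=W$ and choose $\partial_N X|_\Sigma$ so that the three first-order equations $\mathrm{curl}\,X=X$ and $\mathrm{div}\,X=0$ are already satisfied along $\Sigma$. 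In the adapted chart the two tangential components of $\mathrm{curl}\,X=X$ determine $\partial_N X_T|_\Sigma$ algebraically in terms of $W$, and $\mathrm{div}\,X=0$ determines $\partial_N X_N|_\Sigma=-\mathrm{div}_T W_T$. The remaining normal component of $\mathrm{curl}\,X=X$ only involves tangential derivatives of $W_T$ and reads $d(W_T^\flat)=g\sigma$; hypothesis \eqref{CK} is exactly this consistency condition. With the full analytic Cauchy datum in hand, Cauchy-Kovalevskaya produces a unique analytic $X$ on some neighborhood $\Omega$ of $\Sigma$ with $(\Delta+1)X=0$.

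It remains to verify a posteriori that this Helmholtz solution actually solves the first-order system $\mathrm{curl}\,X=X$, and this is the step I expect to be the main obstacle. The natural approach is to introduce the auxiliary fields $Y:=\mathrm{curl}\,X-X$ and $\varphi:=\mathrm{div}\,X$; a direct manipulation based on $(\Delta+1)X=0$ yields $(\Delta+1)Y=0$ and $(\Delta+1)\varphi=0$, while by construction $Y|_\Sigma=0$ and $\varphi|_\Sigma=0$. The delicate point is to show $\partial_N Y|_\Sigma=0$ and $\partial_N\varphi|_\Sigma=0$. This should follow from an algebraic cancellation in the adapted chart: one uses $(\Delta+1)X=0$ to trade second normal derivatives of $X$ for tangential ones, then invokes \eqref{CK} and the prescribed first-order Cauchy data to see that the remaining terms cancel identically. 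Once this verification is complete, the uniqueness part of the Cauchy-Kovalevskaya theorem forces $Y\equiv 0$ and $\varphi\equiv 0$ on $\Omega$, giving the Beltrami property $\mathrm{curl}\,X=X$. The uniqueness of $X$ itself is an immediate consequence of Cauchy-Kovalevskaya uniqueness for the Helmholtz system.
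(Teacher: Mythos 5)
Your plan takes a genuinely different route from the paper's proof of sufficiency, and it can be made to work, but the step you flag as "the main obstacle" is indeed where your argument is incomplete. The paper does not go through the second-order Helmholtz equation. Instead it sets up a first-order Cauchy problem for the elliptic Dirac-type operator acting on the whole exterior algebra: one solves $(d+\delta)\psi = \star\psi$ on $\Omega^0\oplus\Omega^1\oplus\Omega^2\oplus\Omega^3$ with Cauchy datum $\psi|_\Sigma=W^\flat$, and the compatibility condition \eqref{CK} is used precisely to show that $\delta\psi$ has vanishing Cauchy datum along $\Sigma$; since $\delta\psi$ itself satisfies an elliptic first-order equation (after hitting the auxiliary equation with $\star d$ and using a parity operator), uniqueness in Cauchy--Kovalevskaya forces $\delta\psi\equiv 0$, whence $\psi$ is a pure $1$-form solving $\star d\psi=\psi$, i.e.\ $\mathrm{curl}\,X=X$. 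This buys a purely zeroth-order constraint to verify on $\Sigma$ (just $\delta\psi|_\Sigma=0$) rather than a first-order one, and it bypasses the need to track second normal derivatives.

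Your alternative of passing to $(\Delta+1)X=0$, prescribing $\partial_N X|_\Sigma$ so that $\mathrm{curl}\,X|_\Sigma=W$ and $\mathrm{div}\,X|_\Sigma=0$, and then propagating the vanishing of $Y=\mathrm{curl}\,X-X$ and $\varphi=\mathrm{div}\,X$ by uniqueness for the scalar Helmholtz operator, is a legitimate strategy. However, you have not actually carried out the decisive cancellation $\partial_N Y|_\Sigma=0$ and $\partial_N\varphi|_\Sigma=0$, and you frame the verification only in the flat chart $\Sigma=\{x_3=0\}$ with $N=\partial_3$ and the Euclidean Laplacian. In that flat toy model the computation does close up: using $\partial_3^2 X_i=-\partial_1^2X_i-\partial_2^2X_i-X_i$ and the prescribed first-order data, one finds $\partial_3\varphi|_\Sigma=\partial_1W_2-\partial_2W_1-W_3$ and $\partial_3Y_j|_\Sigma=\partial_j(\partial_1W_2-\partial_2W_1-W_3)$ for $j=1,2$, and $\partial_3 Y_3|_\Sigma=0$, so condition \eqref{CK} is exactly what makes everything vanish. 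But for a genuinely curved analytic $\Sigma$ the adapted coordinates $(\rho,\xi_1,\xi_2)$ carry a non-Euclidean metric $d\rho^2+h_{ij}d\xi_i d\xi_j$, the operators $\mathrm{curl}$, $\mathrm{div}$ and the component-wise Laplacian pick up metric and Christoffel terms, and the cancellation you assert becomes a substantially longer computation that you have neither stated nor checked. That is a real gap: the statement is about arbitrary analytic surfaces, and your argument is only shown (implicitly) for planes. To repair it you would either have to carry out the full computation in the $(\rho,\xi_1,\xi_2)$ metric, or find an invariant argument showing $\nabla_N Y|_\Sigma=0$ and $\nabla_N\varphi|_\Sigma=0$ directly from \eqref{CK} and the Helmholtz relation; the paper's use of $d+\delta$ circumvents this entirely by never leaving first order.
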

	
This theorem implies \cref{cor:CK}
\begin{proof}[Proof of \cref{cor:CK}] Let $\Gamma$ be an analytic surface in $\R^3$  whose closure $\bar \Gamma$ is diffeomorphic to $\I \times \R/\Z$. We are going to show that for every $g\in C^\omega(\bar \Gamma,\R)$, there exists a neighborhood $\Omega$ of $\bar \Gamma$ in $\R^3$ and a Beltrami field $X\in C^\omega(\Omega,\R^3)$ such that the normal component to $\Gamma$ of $X|_\Gamma$ equals $g$. To be precise, 	with  $N$ a unit normal vector field on $\Gamma$, this means that $(X|_\Gamma)\cdot N= g$.

In order to show this, it suffices to observe that, given the normal component $g\in C^\omega(\bar\Gamma,\R)$ of a Cauchy datum on $\Gamma$, there are many Cauchy data $W$ satisfying condition \eqref{CK} and such that $W\cdot N=g$. Endow $\bar \Gamma\simeq \I \times \R/\Z$ with coordinates $(z,\theta)\in\I\times\R/\Z$ so that the area form $\sigma$ is $dz\wedge d\theta$. We are looking for a Cauchy datum $W$ such that
	\[
	d(W^\flat_T)=gdz\wedge d\theta.
	\]
	Defining
	\[
	\tilde{g}(z,\theta):=\int_{-1}^zg(s,\theta)ds,
	\]
	we set
	\[
	W^\flat_T:=\tilde{g}(z,\theta)d\theta.
	\]
	Thus, it clearly holds that $d(W^\flat_T)=g(z,\theta)dz\wedge d\theta$, that is $W$ satisfies condition \eqref{CK}. By \cref{CK:teo} applied at $\Gamma$, we conclude that there exists a neighborhood $\Omega$ of $\Gamma$ in $\R^3$ and an Euclidean Beltrami field $X(g)\equiv X\in C^\omega(\Omega,\R^3)$, defined as
	\[
	X|_{\Gamma}:= gN+(\tilde g(z,\theta)d\theta)^\#,
	\]
	whose normal component is $g$.
\end{proof}
\begin{remark}\label{rmk choose}
	Any other $W^\flat_T$ satisfying condition \eqref{CK} is of the form $\tilde{g}(z,\theta)d\theta+\alpha$, where $\alpha$ is a closed 1-form on $\Sigma$. In the following, we will always choose $\alpha=0$, which implies the uniqueness of the Beltrami field with normal datum $g$.
\end{remark}

\begin{remark}\label{linearity CK}
	Let $g\in C^\omega(\bar\Sigma,\R)\mapsto X(g)\in C^\omega(\Omega,\R^3)$ be the map that associates to every normal datum $g$ the corresponding Beltrami field $X(g)$ given by \cref{cor:CK}. Such a map is linear.
	Indeed, let $g_1,g_2$ be normal data and $c_1,c_2\in\R$. Then, for $i=1,2$, there exists a neighborhood $\Omega_i$ of $\Sigma$ and a Beltrami field $X^i:=X(g_i)\in C^\omega(\Omega_i,\R^3)$ such that
	\[
	X^i|_{\Sigma}=W^i_T+g_iN
	\]
	where $W^i_T:=(\tilde{g}_i(z,\theta)d\theta)^\#$. Thus, the Beltrami field $X(c_1 g_1+c_2 g_2)\in C^\omega(\Omega_1\cap\Omega_2,\R^3)$ satisfies
	\[
	X(c_1 g_1+c_2 g_2)|_{\Sigma}=:X=\]
	\[(c_1\tilde{g}_1(z,\theta)d\theta)^\#+(c_2\tilde{g}_2(z,\theta)d\theta)^\#+(c_1 g_1+c_2 g_2)N=(c_1 W^1_T+c_2 W^2_T)+(c_1 g_1+c_2 g_2)N=
	\]
	\[
	c_1(W^1_T+g_1N)+c_2(W^2_T+g_2N)=c_1 X^1+c_2 X^2.
	\]
	Since the equation $\mathrm{curl }\, X=X$ is linear, by uniqueness (see \cref{rmk choose}) we have that $X$ associated to the normal datum $c_1g_1+c_2g_2$ is
	\[
	X=c_1 X^1|_{\Omega_1\cap\Omega_2}+c_2 X^2|_{\Omega_1\cap\Omega_2}.
	\]
\end{remark}

\subsection{Proof of Theorem \ref{CK:teo}}\label{proofCK}

Let us first consider a toy model to understand \cref{CK:teo} and Condition \eqref{CK}, following \cite[Subsection 3.2]{Survey-EPD}.
Let us consider $\Sigma=\{x_3=0\}$ with coordinates $(x_1,x_2,x_3)$ in $\R^3$. Fix an analytic Cauchy datum
\[
W=W_1(x_1,x_2)\partial_{x_1}+W_2(x_1,x_2)\partial_{x_2}+W_3(x_1,x_2)\partial_{x_3}.
\]
In coordinates, the Beltrami equation $\mathrm{curl}\ X= X$ reads as
\begin{numcases}{}
\frac{\partial X_1}{\partial x_3}\qquad = & $ X_2+\frac{\partial X_3}{\partial x_1} $\label{first:Bel}
\\
\frac{\partial X_2}{\partial x_3}\qquad =&$- X_1+\frac{\partial X_3}{\partial x_2}$\label{second:Bel}
\\
\frac{\partial X_3}{\partial x_3}\qquad =&$-\frac{\partial X_1}{\partial x_1}-\frac{\partial X_2}{\partial x_2}$ \label{third:Bel}.
\end{numcases}
By the standard Cauchy-Kovaleskaya's Theorem applied at $\Sigma=\{x_3=0\}$, there exists a unique analytic solution to this system in a neighborhood $\Omega$ of $\Sigma$, with Cauchy datum $W$.

Considering $\frac{\partial}{\partial x_1}$\eqref{second:Bel}$-\frac{\partial}{\partial x_2}$\eqref{first:Bel} and using \eqref{third:Bel}, we obtain
\[
\frac{\partial}{\partial x_3}\left(\frac{\partial X_2}{\partial x_1}-\frac{\partial X_1}{\partial x_2}- X_3\right)=0.
\]
Thus
\begin{equation}\label{eq:toy}
\frac{\partial X_2}{\partial x_1}-\frac{\partial X_1}{\partial x_2}= X_3+f(x_1,x_2),
\end{equation}
in $\Omega$, for some analytic function $f$. Evaluating \eqref{eq:toy} at $\Sigma=\{x_3=0\}$, we obtain
\[
\frac{\partial W_2}{\partial x_1}-\frac{\partial W_1}{\partial x_2}=W_3+f(x_1,x_2).
\]
The vector field $X=(X_1,X_2,X_3)$ is Beltrami in $\Omega$ if and only if the Cauchy datum satisfies the constraint
\begin{equation}\label{const:CK}
\frac{\partial W_2}{\partial x_1}-\frac{\partial W_1}{\partial x_2}=W_3.
\end{equation}
In terms of the dual 1-form $\beta:=W_1dx_1+W_2dx_2$, this is equivalent to
\begin{equation}\label{const:form:CK}
d\beta=W_3dx_1\wedge dx_2.
\end{equation}

We have to choose the tangent part $\beta$ of the Cauchy datum so that \eqref{const:form:CK} is fullfilled. If $\Sigma$ is open, given $W_3$, this is always possible.\\

We now prove \cref{CK:teo}, whose proof follows closely that of Theorem 3.1 in \cite{EncPS}. Observe that Condition \eqref{CK} reduces to the condition of \cite[Theorem 3.1]{EncPS} when $g=0$.
\begin{proof}[Proof of \cref{CK:teo}]
	Let $\Sigma\subset\R^3$ be a bounded, oriented, analytic surface. Denote by
	\[
	j_\Sigma:\Sigma\to\R^3
	\]
	the analytic embedding of $\Sigma$ into $\R^3$.
	Let $W$ be an analytic vector field defined on $\Sigma$. We will refer to it as Cauchy datum. We write $W$ as
	\[
	W=W_T+(W\cdot N) N,
	\]
	where $N$ is the unit normal vector to $\Sigma$, while $W_T$ is the component of $W$ tangent to $\Sigma$.
	Let $W^\flat$ be the dual 1-form of $W$, while $W_T^\flat$ denotes the dual 1-form of $W_T$. Let $g:=W\cdot N$.
	Recall that the vector field $W$ satisfies Condition \eqref{CK} if
	\[
	d W_T^\flat =g\sigma,
	\]
	where here $d$ is the exterior derivative on $\Sigma$ and $\sigma$ is the induced area-form on $\Sigma$.

	Take local analytic coordinates in a neighborhood $\Omega$ of $\Sigma$:
	\[
	(\rho,\xi_1,\xi_2),
	\]
	where $\rho$ is the signed distance function from $\Sigma$: if $\Omega$ is narrow enough, then this function is analytic.
	In these coordinates, the Euclidean metric reads as:
	\[
	ds^2=d\rho^2+h_{ij}(\rho,\xi_1,\xi_2)d\xi_id\xi_j,
	\]
	where the last expression has to be understood as sum for $i,j=1,2$.  In the sequel, we will use such Einstein convention, i.e. repeated indices have to be understood as summed.
	Denote as $h^{ij}$ the inverse matrix of $h_{ij}$ and $|h|:=\mathrm{det}(h_{ij})$ its determinant.
	
	The Beltrami field $X$ reads in these coordinates as
	\[
	X=a(\rho,\xi_1,\xi_2)\partial_{\rho}+b^i(\rho,\xi_1,\xi_2)\partial_{\xi_i}
	\]
	for some functions $a(\rho,\xi_1,\xi_2), b^1(\rho,\xi_1,\xi_2),b^2(\rho,\xi_1,\xi_2)$.
	The 1-form associated to $X$ is so
	\[
	\beta:=a(\rho,\xi_1,\xi_2)d\rho+c_i(\rho,\xi_1,\xi_2)d\xi_i,
	\]
	with $c_i:=h_{ij}b^j$. In terms of the 1-form $\beta$, the equation $\mathrm{curl}\,X=X$ reads as
	\begin{equation}\label{1form:curl}
	\star d\beta= \beta,
	\end{equation}
	where $\star$ denotes the Hodge star operator. In the coordinates $(\rho,\xi_1,\xi_2)$, the term $\star d\beta$ has the form
	\[
	\star d\beta=\]
	\[\dfrac{1}{|h|^{1/2}}\left( \dfrac{\partial c_2}{\partial\xi_1}-\dfrac{\partial c_1}{\partial \xi_2} \right)d\rho+|h|^{1/2}h^{2i}\left( \dfrac{\partial a}{\partial \xi_i} -\dfrac{\partial c_i}{\partial \rho}\right)d\xi_1+|h|^{1/2}h^{1i}\left( \dfrac{\partial c_i}{\partial\rho}-\dfrac{\partial a}{\partial\xi_i} \right)d\xi_2.
	\]
	The equation $\star d\beta=\beta$ is then equivalent, in $(\rho,\xi_1,\xi_2)$-coordinates, to:
	\begin{numcases}{}
	\dfrac{1}{|h|^{1/2}}\left( \dfrac{\partial c_2}{\partial\xi_1}-\dfrac{\partial c_1}{\partial\xi_2} \right)\qquad = & $ a$\label{first:CKgen}
	\\
	|h|^{1/2}h^{2i}\left( \dfrac{\partial a}{\partial\xi_i}-\dfrac{\partial c_i}{\partial\rho} \right)\qquad =&$c_1$\label{second:CKgen}
	\\
	|h|^{1/2}h^{1i}\left( \dfrac{\partial c_i}{\partial\rho}-\dfrac{\partial a}{\partial\xi_i} \right)\qquad =&$c_2$ \label{third:CKgen}
	\end{numcases}
	
	We start by showing the \emph{necessity} of Condition \eqref{CK}. The tangential component of $X$ on $\Sigma$ is
	\[
	X_T=b^i(0,\xi_1,\xi_2)\partial_{\xi_i}.
	\]
	Its dual 1-form is then $X_T^\flat=c_i(0,\xi_1,\xi_2)d\xi_i$. 
	Thus
	\[
	d(c_i(0,\xi_1,\xi_2)d\xi_i)=\left( \dfrac{\partial c_2(0,\xi_1,\xi_2)}{\partial\xi_1}-\dfrac{\partial c_1(0,\xi_1,\xi_2)}{\partial\xi_2} \right)d\xi_1\wedge d\xi_2.
	\]
	On the other hand the area-form $\sigma$ on $\Sigma$ is, since $N=\partial_\rho$,
	\[
	\sigma=|h|^{1/2}(0,\xi_1,\xi_2)d\xi_1\wedge d\xi_2.
	\]
	Evaluating then \eqref{first:CKgen} at $\rho=0$, i.e. at $\Sigma$, we obtain
	\[
	\dfrac{\partial c_2(0,\xi_1,\xi_2)}{\partial\xi_1}-\dfrac{\partial c_1(0,\xi_1,\xi_2)}{\partial\xi_2}=a(0,\xi_1,\xi_2)|h|^{1/2}(0,\xi_1,\xi_2),
	\]
	and, denoting $g:=a(0,\xi_1,\xi_2)$, we obtain Condition \eqref{CK}:
	\[
	d(X_T^\flat)=g\sigma.
	\]
	
	We now show the \emph{sufficiency} of Condition \eqref{CK}. Consider the auxiliary problem:
	\begin{equation}\tag{A}\label{aux}
	\begin{cases}
	(d+\delta)\psi=\star\psi\quad\text{in }\Omega\\
	\psi|_\Sigma=W^\flat,
	\end{cases}
	\end{equation}
	where $W^\flat$ is the dual 1-form of $W$, $\delta$ is the codifferential (i.e. for $\omega\in\Omega^k$ it holds $\delta\omega=(-1)^{k}\star d\star\omega$), and, denoting as $\Omega^k$ the space of k-forms, $\psi\in\Omega^0\oplus\Omega^1\oplus\Omega^2\oplus\Omega^3$. In particular, $\psi$ can be decomposed as $\psi^0\oplus\psi^1\oplus\psi^2\oplus\psi^3$, with $\psi^k$ a $k$-form. The operator $d+\delta$ is a first order elliptic operator (notice that $(d+\delta)^2=d\delta+\delta d=\Delta$ is the Hodge Laplacian). So, by the standard Cauchy-Kovalevskaya's Theorem (see \cite[Page 502, Proposition 4.2]{PDEbasictheory}), the Cauchy problem \eqref{aux} has a unique analytic solution in $\Omega$, for some $\Omega$ narrow enough.
	
	\begin{lemma}\label{Lemma CK1}
		If $W$ satisfies Condition \eqref{CK}, then $\delta\psi=0$ in $\Omega$.
	\end{lemma}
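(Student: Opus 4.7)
The plan is to establish $\delta\psi = 0$ in $\Omega$ by showing that the non-$1$-form components $\psi^0, \psi^2, \psi^3$ of the decomposition $\psi = \psi^0 + \psi^1 + \psi^2 + \psi^3$ vanish identically, after which the degree-$0$ component of the PDE will give $\delta\psi^1 = \star\psi^3 = 0$, and the remaining summands of $\delta\psi$ will drop out through the other degree components.

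First, I would split the auxiliary equation $(d+\delta)\psi = \star\psi$ into its homogeneous components by form-degree,
\[
\delta\psi^1 = \star\psi^3,\qquad d\psi^0+\delta\psi^2 = \star\psi^2,\qquad d\psi^1+\delta\psi^3 = \star\psi^1,\qquad d\psi^2 = \star\psi^0,
\]
and, combining these with $(d+\delta)^2 = \Delta$ together with the 3D Hodge-star identities $d\star = (-1)^k\star\delta$ and $\delta\star = (-1)^{k+1}\star d$ on $\Omega^k$, derive the second-order elliptic equations $(\Delta+1)\psi^0 = 0$, $(\Delta+1)\psi^3 = 0$, and (once $\psi^0 \equiv 0$ is known) $(\Delta-1)\psi^2 = 0$ in $\Omega$.

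Next, I would compute Cauchy data for each of $\psi^0,\psi^2,\psi^3$ on $\Sigma$. Since $\psi|_\Sigma = W^\flat$ is a pure $1$-form, all three components vanish on $\Sigma$. For the normal derivatives, I would work in the adapted coordinates $(\rho,\xi_1,\xi_2)$, writing $\psi^2 = A\,d\xi_1\wedge d\xi_2 + B\,d\xi_1\wedge d\rho + C\,d\xi_2\wedge d\rho$ and $\psi^3 = F\,d\xi_1\wedge d\xi_2\wedge d\rho$ with $A,B,C,F$ all vanishing at $\rho = 0$. Restricting the degree-$1$ equation $d\psi^0 + \delta\psi^2 = \star\psi^2$ to $\Sigma$ splits into a normal $d\rho$-part equal to $\partial_\rho\psi^0|_\Sigma\,d\rho$ and a tangential part involving $\partial_\rho B|_\Sigma$ and $\partial_\rho C|_\Sigma$; since $\star\psi^2|_\Sigma = 0$, these must vanish separately, giving $\partial_\rho\psi^0|_\Sigma = 0$ together with $\partial_\rho B|_\Sigma = \partial_\rho C|_\Sigma = 0$. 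The degree-$3$ equation $d\psi^2 = \star\psi^0$ restricted to $\Sigma$ similarly yields $\partial_\rho A|_\Sigma = 0$, so $\partial_\rho\psi^2|_\Sigma = 0$. Finally, matching the $d\xi_1\wedge d\xi_2$-coefficient of the degree-$2$ equation at $\Sigma$ identifies $\pm\partial_\rho F|_\Sigma$, up to a positive factor of $|h|^{1/2}$, with the quantity $g - d(W_T^\flat)/\sigma$, which vanishes precisely under Condition \eqref{CK}; hence $\partial_\rho\psi^3|_\Sigma = 0$.

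With $(\Delta \pm 1)\psi^k = 0$ and vanishing Cauchy data on the analytic non-characteristic hypersurface $\Sigma$, Holmgren's uniqueness theorem applied successively to $\psi^0$, $\psi^3$, and then (using that $\psi^0 \equiv 0$) to $\psi^2$ yields $\psi^0 = \psi^2 = \psi^3 \equiv 0$ in $\Omega$, possibly after shrinking. Each of $\delta\psi^1$, $\delta\psi^2$, $\delta\psi^3$ then vanishes (the first two from the degree-$0$ and degree-$1$ components of the decomposed PDE, the third directly from $\psi^3 \equiv 0$), and so $\delta\psi = 0$ in $\Omega$. The principal obstacle is the coordinate bookkeeping that identifies $\partial_\rho F|_\Sigma$ with the residual of Condition \eqref{CK}: this is the one place where the compatibility assumption genuinely enters, and it is precisely what makes the Cauchy datum of $\psi^3$ vanish so that the Holmgren step can close.
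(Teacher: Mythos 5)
Your proof is correct, but it takes a genuinely different route from the paper's. The paper first shows that $\delta\psi$ itself satisfies a closed first-order elliptic system, namely $(d+\delta)\delta\psi=-\star Q\delta\psi$ (obtained by applying $\star d$ to the auxiliary equation and using $\star d\star=-Q\delta$), so that Cauchy--Kovalevskaya uniqueness reduces the problem to showing $\delta\psi|_\Sigma=0$. That restriction is then computed as $(\star\psi-d\psi)|_\Sigma$: under Condition~\eqref{CK} the $d\xi_1\wedge d\xi_2$-coefficients of $\star\psi|_\Sigma$ and $d\psi|_\Sigma$ (both equal to $g|h|^{1/2}$) cancel, so the difference is of the form $d\rho\wedge\check\alpha$; a separate degree-by-degree argument shows $\delta\psi|_\Sigma$ cannot have any $d\rho$-component, forcing $\delta\psi|_\Sigma=0$. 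Your route instead targets the mixed-degree components $\psi^0,\psi^2,\psi^3$ directly: you decompose the auxiliary equation by form degree, derive the Helmholtz-type equations $(\Delta+1)\psi^0=0$, $(\Delta+1)\psi^3=0$, and (once $\psi^0\equiv 0$) $(\Delta-1)\psi^2=0$ via the $3$D Hodge identities, then check vanishing of both the values (automatic, since $\psi|_\Sigma=W^\flat$ is a pure $1$-form) and the normal derivatives on $\Sigma$, with Condition~\eqref{CK} entering exactly at $\partial_\rho\psi^3|_\Sigma$. This is more computational (you verify four normal derivatives and three second-order equations, and you need uniqueness for a second-order Cauchy problem where the paper needs it only for a first-order one), but it is arguably more transparent about where the compatibility condition is used, and it yields the stronger fact $\psi=\psi^1$ directly rather than as a subsequent deduction. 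One small simplification worth noting: since $\psi$ is already analytic by Cauchy--Kovalevskaya applied to the auxiliary problem, the uniqueness part of Cauchy--Kovalevskaya already suffices for the final step; invoking Holmgren is correct but slightly more than needed.
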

	\begin{proof}[Proof of Lemma \ref{Lemma CK1}]
		In \cite[Section 4]{EncPS} the authors define a parity operator
		\[
		Q:\Omega^0\oplus\Omega^1\oplus\Omega^2\oplus\Omega^3\to \Omega^0\oplus\Omega^1\oplus\Omega^2\oplus\Omega^3
		\]
		as
		\[
		Q\psi:=\psi^0\oplus -\psi^1\oplus\psi^2\oplus-\psi^3,
		\]
		where $\psi=\psi^0\oplus\psi^1\oplus\psi^2\oplus\psi^3$.
		First, since $\star d\star=-Q\delta$, applying $\star d$ at the equation \eqref{aux} and since $\star\star=\mathrm{Id}$, we  notice that $\delta\psi$ satisfies the elliptic equation
		\[
		(d+\delta)\delta\psi=-\star Q\delta\psi.
		\]
		Therefore, if we show that $\delta\psi|_\Sigma=0$, then $\delta\psi=0$ in $\Omega$ by uniqueness of the solution by the Cauchy-Kovalevskaya's Theorem, since the operator $d+\delta$ is elliptic.
		
		Our aim is then proving that $\delta\psi|_\Sigma=0$. Write $\psi$ as follows:
		\[
		\psi=\Psi+d\rho\wedge\tilde\Psi,
		\]
		where $\Psi,\tilde\Psi$ are forms in $\Omega^0\oplus\Omega^1\oplus\Omega^2\oplus\Omega^3$ such that $i_{\partial_\rho}\Psi=i_{\partial_\rho}\tilde\Psi=0$. Since $\psi|_\Sigma=W^\flat$, then
		\[
		\Psi|_\Sigma=c_id\xi_i\quad\text{and}\quad\tilde\Psi|_\Sigma=g.
		\]
		From \eqref{aux}, it holds that
		\begin{equation}\label{comput1}
		\delta\psi|_\Sigma=(\star\psi-d\psi)|_\Sigma.
		\end{equation}
		By the Cauchy condition, it holds that $\star\psi|_\Sigma=\star W^\flat|_\Sigma$.
		Using that $\star d\rho=|h|^{1/2}d\xi_1\wedge d\xi_2$, we deduce that
		\[
		\star\psi|_\Sigma=\star W^\flat|_\Sigma=\star(gd\rho+c_1(0,\xi_1,\xi_2)d\xi_1+c_2(0,\xi_1,\xi_2)d\xi_2)=\]
		\begin{equation}\label{comput2}
		g|h|^{1/2}d\xi_1\wedge d\xi_2 +d\rho\wedge\tilde\alpha,
		\end{equation}
		for some 1-form $\tilde\alpha$. Denoting as $\bar d$ the exterior derivative only in the variables $\xi_1,\xi_2$, we have
		\[
		d\psi=\bar d\Psi+d\rho\wedge\dfrac{\partial\Psi}{\partial\rho}-d\rho\wedge\bar d\tilde\Psi=\bar d\Psi+d\rho\wedge\left( \dfrac{\partial\Psi}{\partial\rho}-\bar d\tilde\Psi \right).
		\]
		Since $\Psi|_\Sigma=c_1(0,\xi_1,\xi_2)d\xi_1+c_2(0,\xi_1,\xi_2)d\xi_2$, it holds, using Condition \eqref{CK},
		\[
		\bar d\Psi|_\Sigma=g|h|^{1/2}d\xi_1\wedge d\xi_2.
		\]
		Consequently
		\begin{equation}\label{comput3}
		d\psi|_\Sigma=g|h|^{1/2}d\xi_1\wedge d\xi_2+d\rho\wedge\hat\alpha,
		\end{equation}
		for some $\hat\alpha=\hat\alpha^0\oplus\hat\alpha^1\oplus\hat\alpha^2$ on $\Sigma$.
		Putting together \eqref{comput1}, \eqref{comput2} and \eqref{comput3}, we deduce that
		\[
		\delta\psi|_\Sigma=d\rho\wedge\check\alpha,
		\]
		for some $\check\alpha=\tilde\alpha-\hat\alpha=\check\alpha^0\oplus\check\alpha^1\oplus\check\alpha^2$ on $\Sigma$. Noticing that $\psi|_\Sigma=W^\flat$ is a 1-form, we obtain:
		\begin{itemize}
			\item[$(i)$] since $\delta\psi|_\Sigma=d\rho\wedge\check\alpha$ does not contain 0-forms, the components $\delta\psi^0|_\Sigma$ and $\delta\psi^1|_\Sigma$ are both zero;
			\item[$(ii)$] for $p=2,3$ we have \[\delta\psi^p|_\Sigma=(-1)^p(\star d\star\psi^p)|_\Sigma=(-1)^p\star(d\rho\wedge\beta)\] for some form $\beta$. This comes from the fact that
			\[
			d(\star\psi^2)|_\Sigma=d\rho\wedge\partial_\rho(\star\psi^2)|_{\rho=0}+\bar d(\star\psi^2)|_{\rho=0},
			\]
			but since $\psi^2|_{\rho=0}=0$, then $\bar d(\star\psi^2)|_{\rho=0}=\bar d(\star\psi^2|_{\rho=0})=0$. Similarly, since $\psi^3|_\Sigma=0$, it holds that
			\[
			d(\star\psi^3)|_\Sigma=d\rho\wedge\partial_\rho(\star\psi^3)|_{\rho=0}.
			\]
			Therefore, for $p=2,3$, $\delta\psi^p|_\Sigma$ only contains elements with $d\xi_1$ and $d\xi_2$, for $p=2$, and only $d\xi_1\wedge d\xi_2$ for $p=3$.
		\end{itemize}
		The fact that $\delta\psi|_\Sigma$ does not contain the element $d\rho$ contradicts the expression $\delta\psi|_\Sigma=d\rho\wedge\check\alpha$ previously found, unless
		\[
		\delta\psi|_\Sigma=0,
		\]
		as we wanted to prove. This complete the proof of Lemma \ref{Lemma CK1}.
	\end{proof}
	Going back to the problem \eqref{aux}, since $\delta\psi=0$ in $\Omega$, we have that
	\begin{equation*}
	\begin{cases}
	d\psi=\star\psi\quad\text{in }\Omega\\
	\psi|_\Sigma=W^\flat
	\end{cases}
	\end{equation*}
	and, using the identity $\star(\star\psi)=\psi$, it holds
	\begin{equation*}
	\begin{cases}
	\star d\psi=\psi\quad\text{in }\Omega\\
	\psi|_\Sigma=W^\flat.
	\end{cases}
	\end{equation*}
	Since $\star d$ maps 1-forms into 1-forms and $W^\flat$ is a 1-form, it holds $\psi^0=\psi^2=\psi^3=0$. Hence
	\begin{equation*}
	\begin{cases}
	\star d\psi^1=\psi^1\quad\text{in }\Omega\\
	\psi^1|_\Sigma=W^\flat.
	\end{cases}
	\end{equation*}
	Taking the vector field $X$ dual to $\psi^1$, it satisfies then $\mathrm{curl}\,X=X$ and the sufficiency follows. This concludes the proof of the sufficiency and so of \cref{CK:teo}.
\end{proof}

\section{Disjoint union of compact subsets with connected complement}
\label{appendix classical topo}

The following result is used in the proof of Corollary \ref{GATcoro}. It is probably standard, but we include a proof for the sake of completeness.

\begin{proposition}
Let $K_+$ and $K_-$ be two disjoint compact subsets of $\R^n
$ with connected complement. Then $K=K_+\sqcup K_-$ has connected complement.
\end{proposition}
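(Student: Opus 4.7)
The plan is to apply the reduced Mayer--Vietoris sequence to the open cover $\mathbb{R}^n=U\cup W$, where $U:=\mathbb{R}^n\setminus K_+$ and $W:=\mathbb{R}^n\setminus K_-$. Both sets are open, and by hypothesis they are nonempty and connected; because $K_+\cap K_-=\emptyset$ we have $U\cup W=\mathbb{R}^n\setminus(K_+\cap K_-)=\mathbb{R}^n$, and because $K$ is bounded, $U\cap W=\mathbb{R}^n\setminus K$ is nonempty (it contains the complement of any sufficiently large ball).

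The relevant piece of the reduced Mayer--Vietoris sequence then reads
\[
\tilde H_1(\mathbb{R}^n)\;\longrightarrow\;\tilde H_0(U\cap W)\;\longrightarrow\;\tilde H_0(U)\oplus\tilde H_0(W).
\]
As $\mathbb{R}^n$ is contractible, $\tilde H_1(\mathbb{R}^n)=0$; as $U$ and $W$ are connected, $\tilde H_0(U)\oplus\tilde H_0(W)=0$. Exactness forces $\tilde H_0(U\cap W)=0$, i.e.\ $\mathbb{R}^n\setminus K$ is connected. Equivalently, one may invoke Alexander duality: $\mathbb{R}^n\setminus K_\pm$ connected means $\check H^{n-1}(K_\pm)=0$, and the additivity of \v{C}ech cohomology on disjoint unions gives $\check H^{n-1}(K)=0$, hence $\mathbb{R}^n\setminus K$ is connected.

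If an entirely elementary, homology-free proof is preferred (as is presumably the spirit of the appendix), one can instead argue by contradiction. Assume that $V$ is a bounded connected component of $\mathbb{R}^n\setminus K$; since $V$ is a component of an open set, $\partial V\subset K$, which yields a decomposition $\partial V=A\sqcup B$ with $A:=\partial V\cap K_+$ and $B:=\partial V\cap K_-$ disjoint compact. If $A$ is empty then $\partial V\subset K_-$ and the identity
\[
\mathbb{R}^n\setminus K_-\;=\;V\;\sqcup\;\bigl((\mathbb{R}^n\setminus\overline V)\setminus K_-\bigr)
\]
exhibits $\mathbb{R}^n\setminus K_-$ as the disjoint union of two nonempty open sets, contradicting its connectedness; by symmetry $B\neq\emptyset$ as well. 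The main obstacle of this elementary route is the remaining case in which both $A$ and $B$ are nonempty: extracting a contradiction from the simultaneous connectedness of $\mathbb{R}^n\setminus K_+$ and $\mathbb{R}^n\setminus K_-$ requires a more delicate separation argument (in the spirit of Phragm\'en--Brouwer), which is exactly what the Mayer--Vietoris route bypasses.
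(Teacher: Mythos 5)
Your primary argument via Mayer--Vietoris is correct and is genuinely different from the paper's proof. The paper passes to the one-point compactification $\mathbb S^n$ and applies Alexander duality, identifying $\tilde H_0(\mathbb S^n\setminus K)$ with the \v{C}ech cohomology $\check H^{n-1}(K)$; \v{C}ech cohomology is forced on them because $K$ is an arbitrary compactum, not an ANR, and they must then invoke additivity of \v{C}ech cohomology on disjoint compact unions and run Alexander duality backwards for $K_\pm$. Your route sidesteps all of this: by taking the open cover $U=\mathbb R^n\setminus K_+$, $W=\mathbb R^n\setminus K_-$ of $\mathbb R^n$ (which works precisely because $K_+\cap K_-=\emptyset$) and reading off $\tilde H_0(\mathbb R^n\setminus K)=0$ from the reduced Mayer--Vietoris sequence, you never leave the category of open subsets of $\mathbb R^n$, so ordinary singular homology suffices and there is no compactification or duality to set up. The hypotheses you check---$U$, $W$ connected, $U\cap W=\mathbb R^n\setminus K$ nonempty by boundedness of $K$, $\tilde H_1(\mathbb R^n)=0$---are exactly what exactness needs, and the conclusion that $\tilde H_0(U\cap W)=0$ means connectedness is legitimate because $U\cap W$ is a nonempty open (hence locally path-connected) subset of $\mathbb R^n$. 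Your secondary sketch via Alexander duality and additivity of $\check H^{n-1}$ is essentially the paper's proof, so you have recovered both; and you are right to flag that the purely elementary attempt stalls in the ``mixed boundary'' case, where one really is up against a Phragm\'en--Brouwer-type separation statement---that is precisely the content that either homological route packages away.
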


\begin{proof}
Since $K$ is compact, it is clear that $\mathbb R^n\backslash K$ is connected if and only if $\mathbb S^n\backslash K$ is connected, where the $n$-sphere $\mathbb S^n$ is understood as the one-point compactification of $\mathbb R^n$. Alexander's duality then implies \cite{Massey} that
\[
\tilde H_0(\mathbb S^n\backslash K;\mathbb Z)=\tilde H^{n-1}_{CE}(K;\mathbb Z)\,,
\]
where $\tilde H_0(\mathbb S^n\backslash K;\mathbb Z)$ stands for the reduced singular homology of $\mathbb S^n\backslash K$ with integer coefficients (well defined because it is a manifold) and $\tilde H^{n-1}_{CE}(K;\mathbb Z)$ denotes the reduced \v{C}ech cohomology, which is defined for any compact subset of $\mathbb R^n$. Now, since the reduced $k$-th cohomology group coincides with the cohomology group for $k\geq 1$ (this also holds for \v{C}ech cohomology, for which one can also write a long exact sequence \cite{EilSte52}) and $K_+$ and $K_-$ are disjoint, we infer that
\[
\tilde H^{n-1}_{CE}(K;\mathbb Z)=H^{n-1}_{CE}(K;\mathbb Z)=H^{n-1}_{CE}(K_+;\mathbb Z)\oplus H^{n-1}_{CE}(K_-;\mathbb Z)\,,
\]
where to write the last isomorphism we have used the standard Mayer-Vietoris sequence for \v{C}ech cohomology \cite{MIT}. Then, using again Alexander's duality and that $H^{n-1}_{CE}(K_{\pm};\mathbb Z)=\tilde H^{n-1}_{CE}(K_{\pm};\mathbb Z)$, we obtain that
\[
H^{n-1}_{CE}(K_+;\mathbb Z)\oplus H^{n-1}_{CE}(K_-;\mathbb Z)=\tilde H_0(\mathbb S^n\backslash K_+;\mathbb Z)\oplus \tilde H_0(\mathbb S^n\backslash K_-;\mathbb Z)=0
\]
because $\mathbb S^n\backslash K_{\pm}$ are connected by assumption, and the $0$-th reduced singular homology group of a manifold is trivial if and only if the manifold is connected. Putting all these computations together we finally conclude that
\[
\tilde H_0(\mathbb S^n\backslash K;\mathbb Z)=0\,,
\]
so $\mathbb S^n\backslash K$ is connected, as we wanted to prove.
\end{proof}

\end{appendix}
~\newline

\noindent\textit{Acknowledgements.} The authors are very grateful to Francisco Romero Ruiz del Portal for pointing us a reference for a proof that a countable union of totally disconnected compact sets is totally disconnected as well, cf. Fact 6.4. 

\bibliographystyle{alpha}
\bibliography{biblio.bib}
\Addresses

\end{document}